\documentclass[11pt,reqno]{amsart}
\usepackage{amsmath,amsxtra,latexsym,amsthm,amssymb,amscd,pb-diagram}
\usepackage{mathrsfs,mathabx,amsfonts}
\usepackage[colorlinks=true,linkcolor=magenta,citecolor=blue]{hyperref}
\usepackage[margin=1in]{geometry}

\usepackage{bm}
\usepackage{color}
\usepackage{makecell,multirow,diagbox}
\usepackage{mathtools}
\usepackage[displaymath, mathlines]{lineno}

\usepackage{graphicx}
\usepackage{epsfig}
\usepackage{array}
\usepackage{enumitem}

\newtheorem{theorem}{Theorem}[section]
\newtheorem{proposition}{Proposition}[section]
\newtheorem{lemma}{Lemma}[section]

\newtheorem{remark}{Remark}[section]

\newcommand{\R}{\mathbb{R}}

\newcommand{\ity}{\infty}

\newcommand{\f}{\displaystyle\frac}

\begin{document}
\title[Asymptotic properties of $\sigma$-evolution equations with general double damping]{On asymptotic properties of solutions to $\sigma$-evolution equations with general double damping}

\subjclass{35B40, 35B44, 35L30, 35L56}
\keywords{$\sigma$-evolution equations, Double damping, Asymptotic profiles, Global solutions}
\thanks{$^* $\textit{Corresponding author:} Tuan Anh Dao (anh.daotuan@hust.edu.vn)}

\maketitle
\centerline{\scshape Tuan Anh Dao$^{1,*}$, Dinh Van Duong$^1$, Duc Anh Nguyen$^1$}
\medskip
{\footnotesize
	\centerline{$^1$ Faculty of Mathematics and Informatics, Hanoi University of Science and Technology}
	\centerline{No.1 Dai Co Viet road, Hanoi, Vietnam}}

\begin{abstract}
In this paper, we would like to consider the Cauchy problem for semi-linear $\sigma$-evolution equations with double structural damping for any $\sigma\ge 1$. The main purpose of the present work is to not only study the asymptotic profiles of solutions to the corresponding linear equations but also describe large-time behaviors of globally obtained solutions to the semi-linear equations. We want to emphasize that the new contribution is to find out the sharp interplay of ``parabolic like models" corresponding to $\sigma_1 \in [0,\sigma/2)$ and ``$\sigma$-evolution like models" corresponding to $\sigma_2 \in (\sigma/2,\sigma]$, which together appear in an equation. In this connection, we understand clearly how each damping term influences the asymptotic properties of solutions.
\end{abstract}

% \linenumbers
\tableofcontents

%====================================================================================
%=================================================================================={Introduction}	
\section{Introduction}
\subsection{Background of the damped $\sigma$-evolution equations}
First, let us mention some historical views in terms of studying the following Cauchy problem for damped $\sigma$-evolution equations:
\begin{equation}\label{Initial.Eq}
    \begin{cases}
u_{tt}+ (-\Delta)^\sigma u+ \mu(-\Delta)^{\delta} u_t= |\partial_t^j u|^p, &\quad x\in \R^n,\, t \ge 0, \\
u(0,x)= u_0(x),\quad u_t(0,x)= u_1(x), &\quad x\in \R^n,
\end{cases}
\end{equation}
where $\sigma \ge 1$, $\delta \in [0,\sigma]$ and $\mu$ is a positive constant. The term $\mu(-\Delta)^{\delta} u_t$ represents a damping term, more precisely, is often called a frictional (or external) damping, a structural damping, or a visco-elastic (or strong) damping when $\delta=0$, $\delta\in (0,\sigma)$ or $\delta=\sigma$, respectively. The parameter $p>1$ stands for power exponents of the nonlinear terms. Here, the two cases of $j=0$ (the usual power nonlinearity) and $j=1$ (the power nonlinearity of derivative type in time) are of main interest. \medskip

To get started, we recall several results for the corresponding linear equation of \eqref{Initial.Eq} in the form
\begin{equation}\label{Linear_Initial.Eq}
    \begin{cases}
u_{tt}+ (-\Delta)^\sigma u+ \mu(-\Delta)^{\delta} u_t= 0, &\quad x\in \R^n,\, t \ge 0, \\
u(0,x)= u_0(x),\quad u_t(0,x)= u_1(x), &\quad x\in \R^n.
\end{cases}
\end{equation}
Regarding one of the most typical equations of \eqref{Linear_Initial.Eq} when $\sigma=1$, the so-called damped wave equation, one recognizes that $L^{2} - L^{2}$ and $(L^{m}\cap L^{2})- L^{2}$ estimates for solutions with additional $L^{m}$ regularity, $m\in [1,2)$, for the initial data, were established in many papers \cite{Matsumura76, Matsumura77, DabbiccoReissig2014, Shibata2000} by dividing the phase space into small frequencies and large frequencies separately. Afterwards, the authors in \cite{NarazakiReissig2013} made a generalization to catch $L^{r_1} - L^{r_2}$ estimates, with $1\le r_1 \le r_2\le \ity$, based on investigating $L^1$ estimates for oscillating integrals, where $\delta\in(0,1)$. The crux of their method is to use the theory of modified Bessel functions connected to Fourier multipliers with oscillations appearing for wave models. More recently, the authors in \cite{DaoReissig1, DaoReissig2, DuongKainaneReissig2015, DabbiccoEbert2017, DabbiccoEbert2021} extended these aforementioned estimates to \eqref{Linear_Initial.Eq} for any $\sigma> 1$. In particular, another approach was utilized in \cite{DabbiccoEbert2017} to obtain sharp $L^{r_1}- L^{r_2}$ estimates with $1<r_1 \le r_2<\ity$, but did not include $L^1$ estimates. Later, the authors in \cite{DaoReissig1, DaoReissig2} completed such estimates for $\delta\in (0,\sigma)$ by developing the theory of modified Bessel functions associated with Fa\`{a} di Bruno's formula in the treatment of $\sigma$-evolution models instead of wave models. Unfortunately, this strategy fails in the case of visco-elastic type damping $\delta=\sigma$. Hence, the employment of the Mikhlin-H\"{o}rmander multiplier theorem in \cite{DaoReissig2} comes into play to fill out this lack. For the purpose of demonstrating $L^{r_1}- L^{r_2}$ long-time estimates with $1\le r_1 \le r_2\le \ity$, another optimal method was discussed in \cite{DabbiccoEbert2021} to separately control the oscillatory component and the diffusive component of solutions in a particular time-dependent zone of the Fourier space. \medskip

Among other things, the point worth noticing is that the asymptotic profiles of solutions to \eqref{Linear_Initial.Eq} when $\sigma=1$ were described in \cite{IkehataTakeda2019}, which have been also generalized for any $\sigma>1$ in the very recent manuscript \cite{DaoNga2024}. From these cited papers, one can understand that the shape of solutions to \eqref{Linear_Initial.Eq} strongly depends on the range of $\delta$, that is, $\delta\in [0,\sigma/2)$, $\delta=\sigma/2$ and $\delta\in (\sigma/2,\sigma]$. Actually, the authors in \cite{DabbiccoReissig2014, DaoReissig2} mentioned this observation before. They have underlined that the asymptotic properties of solutions change completely not only from the case $\delta\in [0,\sigma/2)$ to $\delta\in (\sigma/2,\sigma]$, but also according to the choice of $\mu$ in the specific case $\delta=\sigma/2$. In other words, we would say that $\delta=\sigma/2$ plays a role as a threshold to classify two different kinds of models in \eqref{Linear_Initial.Eq} and $\mu(-\Delta)^{\sigma/2} u_t$ becomes the critical damping term. More precisely, ``parabolic like models" for the case $\delta\in [0,\sigma/2)$ and ``$\sigma$-evolution like models" for the case $\delta\in (\sigma/2,\sigma]$, the so-called ``hyperbolic like models" or ``wave like models" when $\sigma=1$, have been proposed in \cite{DabbiccoReissig2014,DaoReissig2}. Roughly speaking, the asymptotic profile of the solutions to ``parabolic like models", as $t \to \ity$, is similar to that of the following anomalous diffusion equation:
\begin{equation*}
\begin{cases}
    v_t+ \nu (-\Delta)^{\sigma- \delta}v= 0, &\quad x\in \R^n,\, t \ge 0, \\
    v(0,x)= v_0(x), &\quad x\in \R^n.    
\end{cases}
\end{equation*}
for a suitable choice of data $v_0=v_0(u_0,u_1,\mu,\sigma,\delta,n)$ and a constant $\nu=\nu(\mu,\sigma,\delta,n)$. The last equation turns to the heat equation as long as we have $\sigma-\delta=1$. However, this phenomenon is no longer true for ``$\sigma$-evolution like models", i.e. some kind of wave structure and oscillations in time appear in terms of studying the asymptotic profile of the solutions.\medskip

Concerning the semi-linear equation \eqref{Initial.Eq} when $\sigma=1$, let us remember the series of previous works \cite{DabbiccoReissig2014, Nishihara2003, Narazaki2004, TodorovaYordanov2001, Zhang2001, IkehataTanizawa2005,  IkedaOgawa2016, FujiwaraIkedaWakasugi2021} in which a lot of interesting results have been done including global (in time) existence of small data solutions, blow-up phenomena of solutions in finite time and some estimates for the maximal existence time of solutions, the so-called lifespan estimates. From these achieved ones, we found out the sharp quantity of $p$ to conclude a threshold between global (in time) solutions with small data and blow-up solutions. It is called the \textit{critical exponent}. After that, the authors in \cite{DaoReissig1, DaoReissig2, DuongKainaneReissig2015, DabbiccoEbert2017, Duong2019,  DaoReissig2021, IkedaWakasugi2015} have generalized such results to \eqref{Initial.Eq} for any $\sigma>1$. Specifically, the global (in time) existence of small data solutions has been proven by using decay estimates for solutions from the corresponding linear equation \eqref{Linear_Initial.Eq} linked to several effective tools from Harmonic Analysis not only in $L^2$ norms but also in the more general $L^q$ scale with $q\in (1,\ity)$. In addition, a modified test function method for $\sigma>1$ has been introduced in place of the standard test function method for $\sigma=1$ to deal with the fractional Laplacian, a well-known nonlocal operator. By this way, one may arrive at some remarkable extensions for both blow-up results and lifespan estimates simultaneously.

\subsection{Main purpose of this paper}
In this paper, let us consider the following Cauchy problem for semi-linear $\sigma$-evolution equations with double structural damping:
\begin{equation} \label{Main.Eq}
\begin{cases}
u_{tt}+ (-\Delta)^\sigma u+ \mu_1(-\Delta)^{\sigma_1} u_t+ \mu_2(-\Delta)^{\sigma_2} u_t= |\partial_t^j u|^p, &\quad x\in \R^n,\, t \ge 0, \\
u(0,x)= u_0(x),\quad u_t(0,x)= u_1(x), &\quad x\in \R^n,
\end{cases}
\end{equation}
where $\sigma\ge 1$ is assumed to be any fractional number and $0\le \sigma_1< \sigma/2< \sigma_2\le \sigma$. The corresponding linear equation to \eqref{Main.Eq} we have in mind is
\begin{equation} \label{Linear_Main.Eq}
\begin{cases}
u_{tt}+ (-\Delta)^\sigma u+ \mu_1(-\Delta)^{\sigma_1} u_t+ \mu_2(-\Delta)^{\sigma_2} u_t= 0, &\quad x\in \R^n,\, t \ge 0, \\
u(0,x)= u_0(x),\quad u_t(0,x)= u_1(x), &\quad x\in \R^n.
\end{cases}
\end{equation}
Coming back to the very particular cases $\sigma=1$, $\sigma_1=0$ and $\sigma_2=1$ in \eqref{Main.Eq} and \eqref{Linear_Main.Eq}, the so-called wave equations with frictional damping and visco-elastic damping, the authors in \cite{IkehataTakeda2017, IkehataSawada2016} have concluded that the influence of the frictional damping is really more dominant than that of the visco-elastic one by means of studying the asymptotic profile of solutions to \eqref{Linear_Main.Eq} in the $L^2$ norm and with respect to the critical exponent for \eqref{Main.Eq} as well. Later, the author in \cite{DAbbicco2017} developed $L^1-L^1$ estimates to establish the global (in time) existence of energy solutions for any space dimensions. Several further results for the global (in time) existence of Sobolev solutions to \eqref{Main.Eq} with suitable regularity were shown in \cite{MezadekMezadekReissig2020} with the help of new inequalities from Harmonic Analysis. Speaking more about the asymptotic profile of solutions to \eqref{Linear_Main.Eq}, higher order asymptotic expansions of solutions were also indicated in \cite{IkehataMichihisa2019} thanks to the application of Taylor expansion method. Among other things, the authors in the very recent paper \cite{ChenDAbbiccoGirardi2022} have considered \eqref{Linear_Main.Eq} with $\sigma=1$, $\sigma_1 \in [0,1/2)$ and $\sigma_2 \in (\sigma/2,1]$ to derive some $L^{r_1}- L^{r_2}$ estimates with $1<r_1 \le r_2<\ity$ by expanding the kernels of solutions and employing a combination of the Mikhlin-H\"{o}rmander multiplier theorem and Hardy–Littlewood theorem for the Riesz potential. One realizes that their goal is to use the gained estimates to prove the global (in time) existence of small data solutions to \eqref{Main.Eq} with $\sigma=1$, $\sigma_1 \in [0,1/2)$ and $\sigma_2 \in (\sigma/2,1]$. Quite recently, the authors in \cite{DaoMichihisa2020, MezadekMezadekReissig2022} not only have extended these results related to the cases $\sigma>1$, $\sigma_1=0$ and $\sigma_2=\sigma$ for \eqref{Main.Eq} and \eqref{Linear_Main.Eq}, but also have analyzed the large-time behavior of global solutions by developing several techniques in the cited previous papers to work well for any fractional number. \medskip

To the best of the authors' knowledge, so far it seems that there is no any paper devoted to studying the asymptotic profile of solutions to \eqref{Main.Eq} and \eqref{Linear_Main.Eq} for any fractional number $\sigma> 1$, $\sigma_1 \in [0,\sigma/2)$ and $\sigma_2 \in (\sigma/2,\sigma]$. Hence, our main goal in this paper is, on the one hand, to investigate the asymptotic profile of solutions and their higher-order derivatives to \eqref{Linear_Main.Eq} and as a result, optimal decay rates of solutions can be obtained in the $L^2$ setting. On the other hand, we would like to indicate the global (in time) existence of small data solutions to \eqref{Main.Eq}, and moreover, sharply describe the large-time behaviors of globally achieved solutions. Throughout this paper, we can comprehensively understand the new interaction of ``parabolic like models" and ``$\sigma$-evolution like models" appearing together in \eqref{Main.Eq} and \eqref{Linear_Main.Eq}. Namely, the damping term representing ``parabolic-like models" affects more dominantly than the one of ``$\sigma$-evolution-like models" in terms of exploring the asymptotic profile in all situations of $\sigma> 1$ and for any $\sigma_1 \in [0,\sigma/2)$, $\sigma_2 \in (\sigma/2,\sigma]$. Meanwhile, the damping term representing ``$\sigma$-evolution like models" completely decides the required regularity for both the initial data and the solutions to get these asymptotic behaviors. We can say that the latter property comes from a smoothing effect appearing for some derivatives in time of solutions, which brings some advantages in the treatment of \eqref{Main.Eq}. \medskip

\noindent\textbf{Notations:}
\begin{itemize}[leftmargin=*]
\item We write $f\lesssim g$ when there exists a constant $C>0$ such that $f\le Cg$, and $f \sim g$ when $g\lesssim f\lesssim g$.
\item We denote by $\widehat{w}(t,\xi):= \mathfrak{F}_{x\rightarrow \xi}\big(w(t,x)\big)$, the Fourier transform with respect to the spatial variable of a function $w(t,x)$. Moreover, $\mathfrak{F}^{-1}$ represents the inverse Fourier transform.
\item As usual, $H^{a}$ and $\dot{H}^{a}$, with $a \ge 0$, denote Bessel and Riesz potential spaces based on $L^2$ spaces. Here $\big<D\big>^{a}$ and $|D|^{a}$ stand for the pseudo-differential operators with symbols $\big<\xi\big>^{a}$ and $|\xi|^{a}$, respectively.
\item For any $\gamma \in \R$, we denote by $[\gamma]^+:= \max\{\gamma,0\}$, its positive part. Moreover, we fix the constant $m_0:=\frac{2m}{2-m}$, that is, $\frac{1}{m_0}=\frac{1}{m}- \frac{1}{2}$ with $m \in [1,2)$.
\item For later convenience, we denote the two quantities
$$P_0:= \int_{\R^n}u_0(x)dx, \quad 
P_1:= \int_{\R^n}u_1(x)dx,$$
and the following kernel functions:
\begin{equation*}
    \mathcal{G}_0(t,x) = \mathfrak{F}^{-1}\left(e^{-|\xi|^{2(\sigma - \sigma_1)}t}\right)(t,x), \quad  
    \mathcal{G}_1(t,x) = \mathfrak{F}^{-1}\left(\dfrac{e^{-|\xi|^{2(\sigma - \sigma_1)}t}}{|\xi|^{2\sigma_1}}\right)(t,x).
\end{equation*}
\item Finally, we introduce the space
$\mathcal{D}:= \big(H^{\gamma_0}\cap L^m\big) \times \big(H^{\gamma_1}\cap L^m\big)$ with the norm
$$\|(\phi_0,\phi_1)\|_{\mathcal{D}}:=\|\phi_0\|_{H^{\gamma_0}}+ \|\phi_0\|_{L^m}+ \|\phi_1\|_{H^{\gamma_1}}+ \|\phi_1\|_{L^m} \quad \text{ with }\gamma_0, \gamma_1 \ge 0 \text{ and }m\in [1,2). $$
\end{itemize}
\medskip

\noindent\textbf{The structure of this paper is organized as follows:} We devote Section \ref{Linear estimates} to study the Cauchy linear problem \eqref{Linear_Main.Eq}. After stating the main results at the beginning of Section \ref{Linear estimates}, we present the proofs of decay estimates in Section \ref{DecayEstimates.Sec}, which play essential roles in Section \ref{GlobalExistence.Sec}, and asymptotic profiles in Section \ref{LinearAsymptotic.Sec}. Next, to begin with Section \ref{Semi-linear.Sec}, we state the main results for the Cauchy semi-linear problem \eqref{Main.Eq}, then we give the proofs of the existence of global (in time) solutions and large-time behaviors of the global solutions in Sections \ref{GlobalExistence.Sec} and \ref{Semi-LinearAsymptotic.Sec}, respectively. Finally, further remarks and open problems will be mentioned in Section \ref{Concluding.Sec}.

%==============================================================================
\section{Treatment of the corresponding linear problem} \label{Linear estimates}
In this section, let us consider the linear Cauchy problem \eqref{Linear_Main.Eq}. Our main results are as follows.

\begin{theorem}[\textbf{Decay estimates}] \label{Linear_Decay}
Assume that the initial data belongs to the space
$$ (u_0,u_1) \in \mathcal{D}_{m,s,j}^{\rm lin}:= (H^{s+2j\sigma_2}\cap L^m) \times (H^{[s+2(j-1)\sigma_2]^+}\cap L^m) $$
with $m \in [1,2)$, $j=0,1$ and $s \ge 0$. Then, the Sobolev solutions to \eqref{Linear_Main.Eq} satisfy the $(L^m \cap L^2) - L^2$ estimates
  \begin{align*}
  \|u(t,\cdot)\|_{L^2} &\lesssim 
  \left\{\begin{aligned} 
      &(1+t)^{-\frac{n}{2(\sigma-\sigma_1)}(\frac{1}{m} -\frac{1}{2}) +\frac{\sigma_1}{\sigma-\sigma_1}} \|(u_0,u_1)\|_{\mathcal{D}_{m,0,0}^{\rm lin}} & \text{if } n > 2m_0\sigma_1, \\
       & \log(e+t) \|(u_0,u_1)\|_{(L^2 \cap L^m)\times (L^2 \cap L^m)} & \text{if } n = 2m_0\sigma_1,
    \end{aligned}\right. \\
  \||D|^s u(t,\cdot)\|_{L^2} &\lesssim
 (1+t)^{-\frac{n}{2(\sigma-\sigma_1)}(\frac{1}{m} -\frac{1}{2}) -\frac{s -2\sigma_1}{2(\sigma-\sigma_1)}} \|(u_0,u_1)\|_{\mathcal{D}_{m,s,0}^{\rm lin}}\quad \text{ for }n>m_0(2\sigma_1-s), \\
\||D|^s u_t(t,\cdot)\|_{L^2} &\lesssim 
   (1+t)^{-\frac{n}{2(\sigma-\sigma_1)}(\frac{1}{m} -\frac{1}{2}) -1-\frac{s-2\sigma_1}{2(\sigma-\sigma_1)}} \|(u_0,u_1)\|_{\mathcal{D}_{m,s,1}^{\rm lin}}\quad \text{for } n \geq 1.
  \end{align*}
  and the $L^2-L^2$ estimates
  \begin{align*}
    \||D|^s u(t,\cdot)\|_{L^2} &\lesssim 
  \begin{cases}
      (1+t)^{1-\frac{s}{2(\sigma-\sigma_1)}} \|(u_0,u_1)\|_{H^{s}\times L^2} &\text{if } s < 2\sigma_1, \\
       (1+t)^{-\frac{s}{2(\sigma-\sigma_1)}+\frac{\sigma_1}{\sigma-\sigma_1}} \|(u_0,u_1)\|_{H^s\times H^{[s-2\sigma_2]^{+}}} &\text{if } s \geq 2\sigma_1,
\end{cases} \\
 \||D|^s u_t(t,\cdot)\|_{L^2} &\lesssim (1+t)^{-\frac{s}{2(\sigma-\sigma_1)}} \|(u_0,u_1)\|_{H^{s+2(\sigma-\sigma_2)} \times H^s},
      \end{align*}
for all $n \geq 1$.
\end{theorem}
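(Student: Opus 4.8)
The plan is to work entirely on the Fourier side. Applying $\mathfrak{F}_{x\to\xi}$ to \eqref{Linear_Main.Eq} produces the family of ordinary differential equations $\widehat u_{tt}+b(\xi)\widehat u_t+|\xi|^{2\sigma}\widehat u=0$, $\widehat u(0,\xi)=\widehat u_0(\xi)$, $\widehat u_t(0,\xi)=\widehat u_1(\xi)$, with $b(\xi):=\mu_1|\xi|^{2\sigma_1}+\mu_2|\xi|^{2\sigma_2}$. Its characteristic roots $\lambda_\pm(\xi)=\tfrac12\big({-b(\xi)}\pm\sqrt{b(\xi)^2-4|\xi|^{2\sigma}}\big)$ satisfy $\lambda_++\lambda_-=-b<0$ and $\lambda_+\lambda_-=|\xi|^{2\sigma}>0$, so $\mathrm{Re}\,\lambda_\pm<0$ for every $\xi\neq0$. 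Writing $\widehat u(t,\xi)=\widehat K_0(t,\xi)\widehat u_0(\xi)+\widehat K_1(t,\xi)\widehat u_1(\xi)$ with $\widehat K_0=\big(\lambda_+e^{\lambda_-t}-\lambda_-e^{\lambda_+t}\big)/(\lambda_+-\lambda_-)$, $\widehat K_1=\big(e^{\lambda_+t}-e^{\lambda_-t}\big)/(\lambda_+-\lambda_-)$, and noting the identities $\partial_t\widehat K_0=-|\xi|^{2\sigma}\widehat K_1$ and $\partial_t\widehat K_1=\widehat K_0-b\widehat K_1$, the whole problem reduces to pointwise bounds for $\widehat K_0,\widehat K_1,\partial_t\widehat K_0,\partial_t\widehat K_1$ (and their products with $|\xi|^s$), followed by Plancherel's theorem and, for the $L^m$ part of the data, a H\"older--Hausdorff--Young device.

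I would split $\R^n_\xi$ into a small-frequency zone $\{|\xi|\le\e\}$, a bounded zone $\{\e\le|\xi|\le N\}$ and a large-frequency zone $\{|\xi|\ge N\}$, with $\e$ small and $N$ large but fixed so that $b(\xi)^2\ge2|\xi|^{2\sigma}$ on the first and third. On $\{|\xi|\le\e\}$ the term $\mu_1|\xi|^{2\sigma_1}$ dominates $b(\xi)$ and, since $\sigma_1<\sigma/2$, the roots are real with $\lambda_+(\xi)\sim-\mu_1^{-1}|\xi|^{2(\sigma-\sigma_1)}$ and $\lambda_-(\xi)\sim-\mu_1|\xi|^{2\sigma_1}$: this is the ``parabolic-like'' regime, responsible for the diffusion exponent $\sigma-\sigma_1$ and for the kernels $\mathcal G_0,\mathcal G_1$. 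On $\{|\xi|\ge N\}$ the term $\mu_2|\xi|^{2\sigma_2}$ dominates and, since $\sigma_2>\sigma/2$, the roots are again real with $\lambda_+(\xi)\sim-\mu_2^{-1}|\xi|^{2(\sigma-\sigma_2)}$ and $\lambda_-(\xi)\sim-\mu_2|\xi|^{2\sigma_2}$, so that $e^{\lambda_-t}$ is strongly regularizing while $\widehat K_1\sim|\xi|^{-2\sigma_2}e^{\lambda_+t}$ carries the loss $|\xi|^{-2\sigma_2}$; this high-frequency behaviour is precisely what fixes the regularity indices $s+2j\sigma_2$ and $[s+2(j-1)\sigma_2]^+$ in $\mathcal D^{\rm lin}_{m,s,j}$, and reflects the announced smoothing effect for time derivatives. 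On the bounded zone the roots may become complex, but there $\mathrm{Re}\,\lambda_\pm=-b(\xi)/2\le-c<0$ (or, if still real, both roots stay bounded away from $0$ because $\lambda_+\lambda_-=|\xi|^{2\sigma}\gtrsim1$), which yields plain exponential decay $e^{-ct}$.

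With these asymptotics one records, on $\{|\xi|\le\e\}$, bounds of the form $|\widehat K_0|\lesssim e^{-c|\xi|^{2(\sigma-\sigma_1)}t}$, $|\widehat K_1|\lesssim\min\{t,|\xi|^{-2\sigma_1}\}\,e^{-c|\xi|^{2(\sigma-\sigma_1)}t}$, $|\partial_t\widehat K_0|=|\xi|^{2\sigma}|\widehat K_1|$, and $|\partial_t\widehat K_1|\lesssim e^{-c|\xi|^{2\sigma_1}t}+|\xi|^{2(\sigma-2\sigma_1)}e^{-c|\xi|^{2(\sigma-\sigma_1)}t}$, while on the bounded and large zones one gets exponential-type bounds carrying the regularizing factors and the $|\xi|^{-2\sigma_2}$ loss. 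The bounded and large zones then contribute terms $\lesssim e^{-ct}$ times the relevant $L^2$-based norms of the data (using $L^m\hookrightarrow L^2$ on low frequencies when only an $L^m$ bound is at hand), hence are negligible. For the low-frequency zone and the $(L^m\cap L^2)-L^2$ estimates I would use, for $m\in[1,2)$ and any Fourier multiplier $\mathfrak m=\mathfrak m(\xi)$, the inequality $\|\mathfrak m(\xi)\widehat\phi\|_{L^2(|\xi|\le\e)}\le\|\mathfrak m\|_{L^{m_0}(|\xi|\le\e)}\|\widehat\phi\|_{L^{m'}}\lesssim\|\mathfrak m\|_{L^{m_0}(|\xi|\le\e)}\|\phi\|_{L^m}$ (H\"older with $\tfrac1{m_0}=\tfrac1m-\tfrac12$, then Hausdorff--Young), and for the $L^2-L^2$ estimates the cruder $\|\mathfrak m(\xi)\widehat\phi\|_{L^2(|\xi|\le\e)}\le\big(\sup_{|\xi|\le\e}|\mathfrak m(\xi)|\big)\|\phi\|_{L^2}$. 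It then remains to evaluate quantities of the form $\big\|\,|\xi|^a e^{-c|\xi|^{2\kappa}t}\,\big\|_{L^{m_0}(|\xi|\le\e)}$ and $\sup_{|\xi|\le\e}|\xi|^a e^{-c|\xi|^{2\kappa}t}$, with $\kappa\in\{\sigma-\sigma_1,\sigma_1\}$, by rescaling $\xi\mapsto t^{1/(2\kappa)}\xi$: integrability of $|\xi|^{am_0}$ near the origin produces the restrictions $n>2m_0\sigma_1$ and $n>m_0(2\sigma_1-s)$, the borderline case $n=2m_0\sigma_1$ yields the factor $\log(e+t)$ (after splitting $\{|\xi|\le\e\}$ at $|\xi|=t^{-1/(2\sigma_1)}$ to exploit the cap $\widehat K_1\lesssim t$), and the powers of $(1+t)$ come out exactly as stated after elementary one-variable maximizations.

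The main obstacle is the small-frequency analysis: the two scales $|\xi|^{2\sigma_1}$ and $|\xi|^{2(\sigma-\sigma_1)}$ genuinely compete there, and to reach the \emph{sharp} rates (rather than merely some rates) one must expand $\widehat K_1$, and above all $\partial_t\widehat K_1=\widehat K_0-b\widehat K_1$, one order beyond the crude leading term --- this is what brings out the faster $|\xi|^{2(\sigma-2\sigma_1)}$-type gain in the contribution of $u_1$ to $u_t$, while $\partial_t\widehat K_0=-|\xi|^{2\sigma}\widehat K_1$ explains why no lower bound on $n$ is required for the estimates of $u_t$. One must also treat carefully the region near $\xi=0$, where $\widehat K_1$ is $O(t)$ rather than $O(|\xi|^{-2\sigma_1})$, and the borderline configurations in which the low-frequency integrals diverge only logarithmically. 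Once all these multiplier bounds are in hand, combining the three zones and the two slots of the data yields every inequality in the statement with routine work.
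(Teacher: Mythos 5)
Your plan coincides with the paper's proof in all essentials: the same phase-space splitting into low, intermediate and high frequencies, the same asymptotics \eqref{2.15}--\eqref{2.16} for the characteristic roots, the H\"older--Hausdorff--Young device with exponent $m_0$ for the $(L^m\cap L^2)-L^2$ bounds and the $L^\infty$ multiplier bound for the $L^2-L^2$ bounds, the rescaling $\xi\mapsto t^{1/(2(\sigma-\sigma_1))}\xi$, and the treatment of the borderline case $n=2m_0\sigma_1$ by splitting at $|\xi|\sim t^{-1/(2\sigma_1)}$ to exploit $|\widehat K_1|\lesssim t$. The only (harmless) cosmetic difference is your explicit use of the identities $\partial_t\widehat K_0=-|\xi|^{2\sigma}\widehat K_1$ and $\partial_t\widehat K_1=\widehat K_0-b\widehat K_1$ to organize the pointwise kernel bounds, which the paper obtains by direct computation.
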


\begin{theorem}[\textbf{Asymptotic profiles}]\label{Linear_Asym}
Let $j=0,1$ and $s \ge 0$. Assume that the condition $n>4\sigma_1$ and the initial data
$$ (u_0,u_1) \in \mathcal{D}_{1,s,j}^{\rm lin}:= (H^{s+2j\sigma_2}\cap L^1) \times (H^{[s+2(j-1)\sigma_2]^+}\cap L^1). $$
Then, the Sobolev solutions to \eqref{Linear_Main.Eq} satisfy the following estimates for large $t \ge 1$:
\begin{equation}
\Big\|\partial_t^j |D|^s \Big(u(t,\cdot)-P_0\,\mathcal{G}_0(t,\cdot)- P_1\,\mathcal{G}_1(t,\cdot)\Big)\Big\|_{L^2}= o\Big(t^{-\frac{n}{4(\sigma-\sigma_1)}-\frac{s}{2(\sigma-\sigma_1)}-j+ \frac{\sigma_1}{\sigma-\sigma_1}}\Big). \label{theorem1.3.1}
\end{equation}
Moreover, if $P_0 \neq 0$ or $P_1 \neq 0$, then the following estimates hold for large $t \ge 1$:
\begin{equation}
C_m\,t^{-\frac{n}{4(\sigma-\sigma_1)}-\frac{s}{2(\sigma-\sigma_1)}-j+ \frac{\sigma_1}{\sigma-\sigma_1}}\le \big\|\partial_t^j |D|^s  u(t,\cdot)\big\|_{L^2}\le C_M\,t^{-\frac{n}{4(\sigma-\sigma_1)}-\frac{s}{2(\sigma-\sigma_1)}-j+ \frac{\sigma_1}{\sigma-\sigma_1}}, \label{theorem1.3.2}
\end{equation}
where $C_m$ and $C_M$ are some suitable positive constants.
\end{theorem}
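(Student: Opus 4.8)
The plan is to work entirely on the Fourier side, exploiting the explicit representation of $\widehat{u}(t,\xi)$ as a combination of the two characteristic exponentials associated with the roots $\lambda_\pm(\xi)$ of the symbol $\lambda^2 + \mu_1|\xi|^{2\sigma_1}\lambda + \mu_2|\xi|^{2\sigma_2}\lambda + |\xi|^{2\sigma} = 0$. In the small-frequency zone $|\xi|\le\varepsilon$, a Taylor expansion (precisely as in the derivation of the decay estimates of Theorem \ref{Linear_Decay}) gives $\lambda_+(\xi) = -|\xi|^{2(\sigma-\sigma_1)}/\mu_1 + O(|\xi|^{2(\sigma-\sigma_1)+\kappa})$ for some $\kappa>0$, reflecting the dominance of the ``parabolic-like'' damping $\mu_1(-\Delta)^{\sigma_1}u_t$, together with $\lambda_-(\xi) = -\mu_1|\xi|^{2\sigma_1} + \cdots$, which decays exponentially on $L^2$ after integration. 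Writing $\widehat{u}(t,\xi) = A_+(\xi)e^{\lambda_+(\xi)t} + A_-(\xi)e^{\lambda_-(\xi)t}$ with $A_\pm$ the usual rational coefficients in $\widehat{u}_0,\widehat{u}_1$, the leading term as $t\to\infty$ is $e^{-|\xi|^{2(\sigma-\sigma_1)}t/\mu_1}$ times a combination of $\widehat{u}_0$ and $|\xi|^{-2\sigma_1}\widehat{u}_1$, which is exactly the content of $\mathcal{G}_0$ and $\mathcal{G}_1$ (after absorbing $\mu_1$ into the normalisation; the hypothesis $n>4\sigma_1$ guarantees $|\xi|^{-2\sigma_1}$ is locally $L^2$-integrable so $\mathcal{G}_1$ makes sense).

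For \eqref{theorem1.3.1}, I would split $\partial_t^j|D|^s\big(u - P_0\mathcal{G}_0 - P_1\mathcal{G}_1\big)$ into three regions. In the large-frequency zone $|\xi|\ge N$ and the intermediate zone $\varepsilon\le|\xi|\le N$, the solution $u$ decays exponentially (the smoothing from $\mu_2(-\Delta)^{\sigma_2}u_t$ is what forces the Sobolev orders $s+2j\sigma_2$ in the data and makes $\partial_t^j|D|^s u$ exponentially small there), while $\mathcal{G}_0,\mathcal{G}_1$ are likewise exponentially small away from $\xi=0$; hence these contributions are $o(t^{-\infty})$. The genuine work is in $|\xi|\le\varepsilon$: there I would write the difference of multipliers as
\[
\partial_t^j\Big(A_+(\xi)e^{\lambda_+(\xi)t} + A_-(\xi)e^{\lambda_-(\xi)t}\Big) - \partial_t^j\Big(\big(\widehat{u}_0(\xi)\cdot\text{(coeff)} + \widehat{u}_1(\xi)\cdot\text{(coeff)}\big)e^{-|\xi|^{2(\sigma-\sigma_1)}t/\mu_1}\Big),
\]
and then further replace $\widehat{u}_0(\xi)\to P_0$, $\widehat{u}_1(\xi)\to P_1$. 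Each replacement produces a gain: the error from $\lambda_+\mapsto -|\xi|^{2(\sigma-\sigma_1)}/\mu_1$ and from $A_+\mapsto$ its value at $\xi=0$ yields an extra power $|\xi|^\kappa$ (using $|e^{\lambda_+ t}-e^{-|\xi|^{2(\sigma-\sigma_1)}t/\mu_1}|\lesssim t|\xi|^{2(\sigma-\sigma_1)+\kappa}e^{-c|\xi|^{2(\sigma-\sigma_1)}t}$ and $te^{-c|\xi|^{2(\sigma-\sigma_1)}t}\lesssim|\xi|^{-2(\sigma-\sigma_1)}$); the error from $\widehat{u}_i(\xi)-P_i = \widehat{u}_i(\xi)-\widehat{u}_i(0)$ is $o(1)$ as $\xi\to0$ by the dominated convergence / Riemann–Lebesgue argument for $L^1$ data (this is where the little-$o$, rather than big-$O$, appears). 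Computing the resulting $L^2$ norm by scaling $\eta = t^{1/(2(\sigma-\sigma_1))}\xi$ gives the stated rate $t^{-\frac{n}{4(\sigma-\sigma_1)}-\frac{s}{2(\sigma-\sigma_1)}-j+\frac{\sigma_1}{\sigma-\sigma_1}}$ multiplied by a factor tending to $0$.

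For \eqref{theorem1.3.2}, the upper bound is immediate from \eqref{theorem1.3.1} combined with the sharp rate of $\|\partial_t^j|D|^s(P_0\mathcal{G}_0+P_1\mathcal{G}_1)\|_{L^2}$, which one computes directly by the same scaling: $\|\,|D|^s\partial_t^j\mathcal{G}_0(t,\cdot)\|_{L^2}\sim t^{-\frac{n}{4(\sigma-\sigma_1)}-\frac{s}{2(\sigma-\sigma_1)}-j}$ and $\|\,|D|^s\partial_t^j\mathcal{G}_1(t,\cdot)\|_{L^2}\sim t^{-\frac{n}{4(\sigma-\sigma_1)}-\frac{s}{2(\sigma-\sigma_1)}-j+\frac{\sigma_1}{\sigma-\sigma_1}}$, the latter dominating. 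The lower bound follows from the reverse triangle inequality: if say $P_1\neq0$ then $\|\partial_t^j|D|^s u\|_{L^2}\ge |P_1|\,\|\partial_t^j|D|^s\mathcal{G}_1\|_{L^2} - |P_0|\,\|\partial_t^j|D|^s\mathcal{G}_0\|_{L^2} - \|\text{error}\|_{L^2}$, and since the $\mathcal{G}_0$ term carries the strictly smaller exponent $-\frac{\sigma_1}{\sigma-\sigma_1}$ relative to the $\mathcal{G}_1$ term and the error is $o(\cdot)$, both are absorbed for $t$ large; the case $P_0\neq0$, $P_1=0$ is the same with $\mathcal{G}_0$ alone. I expect the main obstacle to be the careful bookkeeping in the small-frequency zone — keeping track of the exact power of $|\xi|$ gained at each replacement, verifying the local $L^2$-integrability of the singular factors $|\xi|^{-2\sigma_1}$ and $|\xi|^{s-2\sigma_1}$ (which is precisely the role of $n>4\sigma_1$ and why no correction beyond $\mathcal{G}_0,\mathcal{G}_1$ is needed), and ensuring the dominating-function estimates used in the Lebesgue argument for $\widehat{u}_i(\xi)-P_i$ are uniform in $t$ after the scaling.
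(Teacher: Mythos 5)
Your proposal follows essentially the same route as the paper: Fourier-side decomposition along the two characteristic roots, exponential smallness of the middle/high-frequency parts, a mean-value-theorem comparison of $e^{\lambda_1 t}$ with $e^{-|\xi|^{2(\sigma-\sigma_1)}t}$ gaining an extra power of $|\xi|$ at small frequencies (the paper's Propositions \ref{proposition3.5}--\ref{proposition3.6}), the replacement $\widehat{u}_i(\xi)\to P_i$ via the Riemann--Lebesgue/$L^1$ argument that produces the little-$o$ (the paper's Lemma \ref{L^1.Lemma}), the scaling computation of $\|\partial_t^j|D|^s\mathcal{G}_1\|_{L^2}$ under $n>4\sigma_1$, and triangle inequalities for \eqref{theorem1.3.2}. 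Even your treatment of the lower bound when $P_1=0$, $P_0\neq 0$ (where $\|\mathcal{G}_0\|_{L^2}$ decays strictly faster than the claimed rate while the error is only $o$ of the slower rate, so the absorption is delicate) mirrors the paper's own argument, so there is nothing to add.
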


%=====================================================================
\subsection{Proof of decay estimates} \label{DecayEstimates.Sec}

Applying the Fourier transform to \eqref{Linear_Main.Eq}, we obtain
\begin{equation}\label{Linear.Eq-Fourier}
\begin{cases}
    \widehat{u}_{tt}(t,\xi) + (\mu_1|\xi|^{2\sigma_1}+\mu_2|\xi|^{2\sigma_2})\widehat{u}_t(t,\xi) + |\xi|^{2\sigma} \widehat{u}(t,\xi) = 0, &\quad \xi\in \R^n,\, t \ge 0, \\
    \widehat{u}(0,\xi) = \widehat{u}_0(\xi),\quad \widehat{u}_t(0,\xi)= \widehat{u}_1(\xi), &\quad \xi\in \R^n.
\end{cases}
\end{equation}
The characteristic equation of \eqref{Linear.Eq-Fourier} has two roots $\lambda_{1,2} =\lambda_{1,2}(\xi)$ which are given by
\begin{equation}\label{2.8}
\lambda_{1,2}(\xi) =
\begin{cases}
    \frac{1}{2}\left(-\mu_1|\xi|^{2\sigma_1} - \mu_2|\xi|^{2\sigma_2} \pm \sqrt{(\mu_1|\xi|^{2\sigma_1} + \mu_2|\xi|^{2\sigma_2})^{2} - 4 |\xi|^{2\sigma}}\right) & \text { if } \xi \in \Omega, \\
     \frac{1}{2}\left(-\mu_1|\xi|^{2\sigma_1} - \mu_2|\xi|^{2\sigma_2} \pm i \sqrt{4 |\xi|^{2\sigma}-(\mu_1|\xi|^{2\sigma_1} + \mu_2|\xi|^{2\sigma_2})^{2} }\right) & \text { if } \xi \in \mathbb{R}^n \setminus \Omega,  
\end{cases}
\end{equation}
where $\Omega = \left\{\xi \in \mathbb{R}^n: \mu_1|\xi|^{2\sigma_1} + \mu_2|\xi|^{2\sigma_2} > 2 |\xi|^{\sigma} \right\}$. Because of $0 \le \sigma_1 <\sigma/2 < \sigma_2 \le \sigma$, there exists a sufficiently small constant $\varepsilon^*>0$ such that
\begin{equation*}
    (-\ity, \varepsilon^*)\cup \left(\frac{1}{\varepsilon^*},\ity\right) \subset \Omega.
\end{equation*}
Then, taking account of the cases of small and large frequencies separately, one sees
\begin{align}
     &\lambda_{1} \sim -|\xi|^{2(\sigma-\sigma_1)},\quad 
    \lambda_{2} \sim -|\xi|^{2\sigma_1}, \quad \lambda_1-\lambda_2 \sim |\xi|^{2\sigma_1}\quad \text{ for } |\xi| \leq \varepsilon^*, \label{2.15} \\
    &\lambda_{1} \sim -|\xi|^{2(\sigma-\sigma_2)} ,\quad \lambda_{2} \sim -|\xi|^{2\sigma_2}, \quad \lambda_1-\lambda_2 \sim |\xi|^{2\sigma_2} \quad \text{ for }|\xi| \geq \frac{1}{\varepsilon^*}. \label{2.16}
\end{align} 
Let $\chi_k= \chi_k(r)$ with $k\in\{\rm L,M,H\}$ be smooth cut-off functions having the following properties:
\begin{align*}
&\chi_{\rm L}(r)=
\begin{cases}
1 &\quad \text{ if }r\le \varepsilon^*/2, \\
0 &\quad \text{ if }r\ge \varepsilon^*,
\end{cases}
\qquad
\chi_{\rm H}(r)=
\begin{cases}
1 &\quad \text{ if }r\ge 2/\varepsilon^*, \\
0 &\quad \text{ if }r\le 1/\varepsilon^*,
\end{cases} \\ 
&\text{and } \chi_{\rm M}(r)= 1- \chi_{\rm L}(r)- \chi_{\rm H}(r).
\end{align*}
We note that $\chi_{\rm M}(r)= 1$ if $\varepsilon^* \le r\le 1/\varepsilon^*$ and $\chi_{\rm M}(r)= 0$ if $r \le \varepsilon^*/2$ or $r \ge 2/\varepsilon^*$. The solution to \eqref{Linear.Eq-Fourier} is
\begin{equation}\label{2.4}
    \widehat{u}(t,\xi) = \widehat{K_0}(t,\xi) \widehat{u_0}(\xi) + \widehat{K_1}(t,\xi) \widehat{u_1}(\xi),
\end{equation}
where 
\begin{equation}\label{2.6}
\widehat{K_0}(t,\xi) = \frac{\lambda_1 e^{\lambda_2t}-\lambda_2e^{\lambda_1t}}{\lambda_1 - \lambda_2}\quad\text{ and }\quad
\widehat{K_1}(t,\xi) = \frac{e^{\lambda_1t}-e^{\lambda_2t}}{\lambda_1 - \lambda_2},
\end{equation}
so that we can write the solution to \eqref{Linear_Main.Eq} as
\begin{equation}\label{2.5}
u(t,x) = K_0(t,x) * u_0(x) + K_1(t,x) * u_1(x).
\end{equation}
We now decompose the solution \eqref{Linear_Main.Eq} into three parts localized separately to low, middle, and high frequencies, that is,
\begin{equation*}
u(t,x)= u_{\rm L}(t,x)+ u_{\rm M}(t,x)+ u_{\rm H}(t,x),
\end{equation*}
where
\begin{equation*}
    u_{k}(t,x)= \mathfrak{F}^{-1}\big(\chi_k(|\xi|)\widehat{u}(t,\xi)\big)\quad \text{ with } k\in \{\rm L,M,H\}.
\end{equation*}

\begin{proof}[\textbf{Proof of Theorem \ref{Linear_Decay}}]
In order to estimate the $L^2$ norms of solutions and their derivatives by $L^m$ norms of the data, by using Young’s inequality, we need only to estimate the $L^2$ norm of general terms of the form $|\xi|^{s}\partial_t^j\widehat{K_0}(t,\xi)$ and $|\xi|^{s}\partial_t^j\widehat{K_1}(t,\xi)$. Due to Parseval's formula, \eqref{2.4} and \eqref{2.5}, we have
\begin{equation}\label{2.18}
    \|u_{\rm L}(t,\cdot)\|_{L^{2}} = \|\chi_{\rm L}(\xi)u(t,\xi)\|_{L^{2}} \leq \big\|\chi_{\rm L}(|\xi|)\widehat{K_0}(t,\xi) \widehat{u_0}(t,\xi)\big\|_{L^2} + \big\|\chi_{\rm L}(|\xi|)\widehat{K_1}(t,\xi) \widehat{u_1}(t,\xi)\big\|_{L^2}.
\end{equation}
Applying H\"{0}lder's inequality we gain
\begin{align*}
    \big\|\chi_{\rm L}(\xi)\widehat{K_0}(t,\xi) \widehat{u_0}(t,\xi)\big\|^2_{L^2} &= \int_{\mathbb{R}^{n}}\left(\chi_{\rm L}(|\xi|)\widehat{K_0}(t,\xi)\right)^2(\widehat{u_0}(t,\xi))^2 d\xi \\
     &\leq \left(\int_{\mathbb{R}^{n}}\left(\chi_{\rm L}(|\xi|)\widehat{K_0}(t,\xi)\right)^{m_0} d\xi\right)^{\frac{2}{m_0}} \left(\int_{\mathbb{R}^{n}} \left(\widehat{u}_0(\xi)\right)^{m'} d\xi \right)^{\frac{2}{m'}}\\
     &= \big\|\chi_{\rm L}(|\xi|)\widehat{K_0}(t,\xi)\big\|_{L^{m_0}}^2 \|\widehat{u}_0\|_{L^{m'}}^2.
\end{align*}
Thanks to the Hausdorff-Young's inequality, it holds $\|\widehat{u}_0\|_{L^{m'}} \leq \|u\|_{L^m}$ with $\frac{1}{m'} + \frac{1}{m} = 1$  and $m \in [1,2)$. As a result, we obtain the estimate
\begin{equation}\label{2.19}
     \big\|\chi_{\rm L}(|\xi|)\widehat{K_0}(t,\xi) \widehat{u_0}(t,\xi)\big\|_{L^2} \leq \big\|\chi_{\rm L}(|\xi|)\widehat{K_0}(t,\xi)\big\|_{L^{m_0}} \|u_0\|_{L^m}.
\end{equation}
Similarly, we also derive
\begin{equation}\label{2.20}
    \big\|\chi_{\rm L}(|\xi|)\widehat{K_1}(t,\xi) \widehat{u_1}(t,\xi)\big\|_{L^2} \leq \big\|\chi_{\rm L}(|\xi|)\widehat{K_1}(t,\xi)\big\|_{L^{m_0}} \|u_1\|_{L^m},
\end{equation}
We introduce the following notations:
\begin{align*}
    P_1^{m_0}(j,s) &\coloneqq \int_{\mathbb{R}^{n}}|\partial_t^j \widehat{K_1}(t, \xi)|^{m_0} |\xi|^{m_0s} \chi_{\rm L}(|\xi|)d \xi, \\
    P_0^{m_0}(j,s) &\coloneqq \int_{\mathbb{R}^{n}} |\partial_t^j \widehat{K_0}(t, \xi)|^{m_0}|\xi|^{m_0s} \chi_{\rm L}(|\xi|)d \xi.
 \end{align*}
By \eqref{2.15}, we arrive at
\begin{align*}
   \left|\widehat{K_1}(t,\xi)\right|^{m_0}\chi_{\rm L}(|\xi|) &\lesssim |\xi|^{-2m_0\sigma_1}\big(e^{-m_0|\xi|^{2(\sigma-\sigma_1)}t}+e^{-m_0|\xi|^{2\sigma_1}t}\big), \\
    \left|\partial_t\widehat{K_1}(t,\xi)\right|^{m_0}\chi_{\rm L}(|\xi|)&\lesssim |\xi|^{2m_0(\sigma-2\sigma_1)} e^{-m_0|\xi|^{2(\sigma-\sigma_1)}t} + e^{-m_0|\xi|^{2\sigma_1}t}, \\
    \left|\widehat{K_0}(t,\xi)\right|^{m_0}\chi_{\rm L}(|\xi|) 
    &\lesssim |\xi|^{2m_0(\sigma-2\sigma_1)}e^{-m_0|\xi|^{2\sigma_1}t}+e^{-m_0|\xi|^{2(\sigma-\sigma_1)}t}, \\
    \left|\partial_t\widehat{K_0}(t,\xi)\right|^{m_0} \chi_{\rm L}(|\xi|) &\lesssim |\xi|^{2m_0(\sigma-\sigma_1)}\big(e^{-m_0|\xi|^{2(\sigma-\sigma_1)}t}+e^{-m_0|\xi|^{2\sigma_1}t}\big).
\end{align*}
At first, $P_1^{m_0}(0,s)$ can be controlled by
\begin{equation*}
    P_1^{m_0}(0,s) \lesssim \int_{|\xi| \leq \varepsilon^*} e^{-m_0|\xi|^{2(\sigma-\sigma_1)}t}|\xi|^{-2m_0\sigma_1+m_0s} d\xi
    \lesssim \int_{|\xi| \leq \varepsilon^*} e^{-m_0|\xi|^{2(\sigma-\sigma_1)}t}|\xi|^{-2m_0\sigma_1+m_0s +n-1} d|\xi|.
\end{equation*}
By the change of variables $\eta = \xi (t+1)^{\frac{1}{2(\sigma-\sigma_1)}}$, we obtain
\begin{align*}
    P_1^{m_0}(0,s) & \lesssim \int_{|\xi| \leq \varepsilon^*} e^{-m_0|\xi|^{2(\sigma-\sigma_1)}(t+1)}|\xi|^{-2m_0\sigma_1+m_0s +n-1} d|\xi| \\
    &\lesssim (1+t)^{-\frac{n+m_0s-2m_0\sigma_1}{2(\sigma-\sigma_1)}}\int_0^{\infty}e^{-m_0|\eta|}|\eta|^{n-1+m_0s -2m_0\sigma_1} d|\eta|\\
    &\lesssim (1+t)^{-\frac{n+m_0s-2m_0\sigma_1}{2(\sigma-\sigma_1)}} = (1+t)^{-\frac{n}{2(\sigma-\sigma_1)}-m_0\frac{s-2\sigma_1}{2(\sigma-\sigma_1)}}.
\end{align*}
The convergence of the last integral is guaranteed by the assumption $n > 2m_0\sigma_1$. In the same argument, we can deal with $P_1^{m_0}(1,s)$ as follows:
\begin{align*}
    P_1^{m_0}(1,s) \lesssim& \int_{|\xi| \leq \varepsilon^*} e^{-m_0|\xi|^{2(\sigma-\sigma_1)}(t+1)}|\xi|^{2m_0(\sigma-2\sigma_1)+n-1+m_0s}d|\xi|
    + \int_{|\xi| \leq \varepsilon^*} e^{-m_0|\xi|^{2\sigma_1}(t+1)}|\xi|^{m_0s +n-1} d|\xi|\\
    \lesssim& (1+t)^{-\frac{n+m_0s+2m_0(\sigma-2\sigma_1)}{2(\sigma-\sigma_1)}} \int_0^{\infty}e^{-m_0|\eta|}|\eta|^{2m_0(\sigma-2\sigma_1)+n-1+m_0s}d|\eta|\\
    &+ (1+t)^{-\frac{n+m_0s}{2\sigma_1}}\int_0^{\infty}e^{-m_0|\eta|}|\eta|^{m_0s+n-1}d|\eta|\\
    \lesssim& (1+t)^{-\frac{n+m_0s+2m_0(\sigma-2\sigma_1)}{2(\sigma-\sigma_1)}} = (1+t)^{-\frac{n}{2(\sigma-\sigma_1)}-m_0(\frac{s-\sigma_1}{2(\sigma-\sigma_1)}+1)}.
\end{align*}
In the next step, we used the below inequality
\begin{equation*}
    -\frac{n+m_0s+2m_0(\sigma-2\sigma_1)}{2(\sigma-\sigma_1)} \geq -\frac{n+m_0s}{2\sigma_1}.
\end{equation*}
The estimates for $P_0^{m_0}(j,s)$ can be concluded in an analogous way, namely
\begin{align*}
    P_0^{m_0}(0,s) &\lesssim \int_{|\xi| \leq \varepsilon^*}|\xi|^{2m_0(\sigma-2\sigma_1)+m_0s+n-1}e^{-m_0|\xi|^{2(\sigma-\sigma_1)}(t+1)}d|\xi| \\
    &\qquad + \int_{|\xi| \leq \varepsilon^*}|\xi|^{m_0s+n-1}e^{-m_0|\xi|^{2(\sigma-\sigma_1)}(t+1)}d|\xi|\\
    &\lesssim (1+t)^{-\frac{2m_0(\sigma-2\sigma_1)+m_0s +n}{2(\sigma-\sigma_1)}}\int_0^{\infty}e^{-m_0|\eta|}|\eta|^{2m_0(\sigma-2\sigma_1)+m_0s+n-1}d|\eta|\\
    &\qquad +(1+t)^{-\frac{n+m_0s}{2(\sigma-\sigma_1)}}\int_0^{\infty}e^{-m_0|\eta|}|\eta|^{m_0s+n-1} d|\eta|\\
    &\lesssim (1+t)^{-\frac{n+m_0s}{2(\sigma-\sigma_1)}} = (1+t)^{-\frac{n}{2(\sigma-\sigma_1)}-m_0 \frac{s}{2(\sigma-\sigma_1)}}, 
\end{align*}
and 
\begin{align*}
    P_0^{m_0}(1,s)
    &\lesssim \int_{|\xi| \leq \varepsilon^*}|\xi|^{2m_0(\sigma-\sigma_1)+m_0s+n-1}e^{-m_0|\xi|^{2(\sigma-\sigma_1)}(t+1)}d|\xi|\\
    &\lesssim (1+t)^{-\frac{n+m_0s+2m_0(\sigma-\sigma_1)}{2(\sigma-\sigma_1)}} = (1+t)^{-\frac{n}{2(\sigma-\sigma_1)}-m_0(\frac{s}{2(\sigma-\sigma_1)}+1)}.
\end{align*}
In the special case $n = 2m_0\sigma_1$, some changes are only done for estimating $P_1^{m_0}(0,0)$. Particularly, we can write 
\begin{equation*}
    \widehat{K_1}(t,\xi) = e^{-\frac{1}{2}(\mu_1|\xi|^{2\sigma_1}+\mu_2|\xi|^{2\sigma_2})t}t \frac{\sinh(At)}{At}.
\end{equation*}
with $A=A(\xi):= \frac{1}{2}\sqrt{\left(\mu_1|\xi|^{2\sigma_1}+\mu_2|\xi|^{2\sigma_2}\right)^2-4|\xi|^{2\sigma}}.$ Note that for small frequency $|\xi|$ we have $A(\xi) \sim \frac{1}{2} |\xi|^{2\sigma_1}$ and $\sinh(At) \sim At$ for $At \ll 1 $. We define the function $\rho(t)$ from the equation
\begin{equation*}
    \frac{1}{2}\sqrt{\left(\mu_1\rho^{2\sigma_1}+\mu_2\rho^{2\sigma_2}\right)^2-4\rho^{2\sigma}} = \frac{1}{t}.
\end{equation*}
Therefore, $\rho \sim t^{-\frac{1}{2\sigma_1}}$ for small $\rho$ so that one concludes 
\begin{align*}
    P_1^{m_0}(0,0) \lesssim \int_{|\xi| \leq \rho(t)} e^{-\frac{m_0}{2}(|\xi|^{2\sigma_1}+|\xi|^{2\sigma_2})t}t^{m_0} d\xi + \int_{\rho(t) \leq |\xi| \leq \varepsilon^*} |\xi|^{-2m_0\sigma_1} d\xi,
\end{align*}
because of $e^{-\frac{1}{2}(|\xi|^{2\sigma_1}+|\xi|^{2\sigma_2})t}\sinh(At) \lesssim 1$. Recalling that $n = 2m_0\sigma_1$, we may arrive at
\begin{align*}
    P_1^{m_0}(0,0) \lesssim \rho^n t^{m_0} + \log(e+ \rho^{-1}) \lesssim 1+ \log\Big(e+t^{\frac{m_0}{2\sigma_1}}\Big) \lesssim \log^{m_0}(e+t)
\end{align*}
thanks to $\rho^n \sim t^{-\frac{n}{2\sigma_1}} = t^{-m_0}$ for small $\rho$. In this way, we completed the estimates for the corresponding kernels for small frequencies.

Let us now devote to large values of $|\xi|$. By \eqref{2.16}, it follows that
\begin{align*}
    |\xi|^s|\widehat{K_0}(t,\xi)| \chi_{\rm H}(|\xi|)&\lesssim e^{-|\xi|^{2(\sigma-\sigma_2)}t}+ |\xi|^{s+2(\sigma-2\sigma_2)}e^{-|\xi|^{2\sigma_2}t},\\
    |\xi|^s|\widehat{K_1}(t,\xi)| \chi_{\rm H}(|\xi|)&\lesssim |\xi|^{s-2\sigma_2}\big(e^{-|\xi|^{2(\sigma-\sigma_2)}t}+ e^{-|\xi|^{2\sigma_2}t}\big),\\
    |\xi|^s|\partial_t\widehat{K_0}(t,\xi)| \chi_{\rm H}(|\xi|)&\lesssim |\xi|^{s+2(\sigma-\sigma_2)}\big(e^{-|\xi|^{2(\sigma-\sigma_2)}t}+ e^{-|\xi|^{2\sigma_2}t}\big),\\
    |\xi|^s|\partial_t\widehat{K_1}(t,\xi)| \chi_{\rm H}(|\xi|)&\lesssim |\xi|^{s+2(\sigma-2\sigma_2)}e^{-|\xi|^{2(\sigma-\sigma_2)}t}+e^{-|\xi|^{2\sigma_2}t},
\end{align*}
where all the terms are controlled by $e^{-ct}$. For $t \geq 1$ we obtain a better decay, meanwhile, for $ t \in (0, 1]$ we can shift $|\xi|^{s}$ to some regularity assumption for the data to estimate the last term by a constant. Thus, all the $(L^2 \cap L^m) - L^2$ estimates are proved.

Now we will prove the $L^2 - L^2$ decay estimates. We will use the boundedness of the $L^{\infty}$ for $|\xi|^{s}\partial_t^j \widehat{K_0}(t,\xi)$ and $|\xi|^{s}\partial_t^j \widehat{K_1}(t,\xi)$ with $(j,s) \in \{(0,s),(1,s)\}$, $s \geq 0$ in the zone of low frequencies. Again, by \eqref{2.15}, \eqref{2.16} and the relation
$$ |\widehat{K}_1(t,\xi)| = te^{\lambda_1 t}\int_0^1 e^{-\theta t \sqrt{\left(|\xi|^{2\sigma_1}+|\xi|^{2\sigma_2}\right)^2-4|\xi|^{2\sigma}}} d\theta $$
one finds that
\begin{align*}
    |\xi|^s|\widehat{K_0}(t,\xi)| \chi_{\rm L}(|\xi|) &\lesssim(t+1)|\xi|^{s+2(\sigma-\sigma_1)}e^{-|\xi|^{2(\sigma-\sigma_1)}t}+|\xi|^s e^{-|\xi|^{2(\sigma-\sigma_1)}t} \\
    &\lesssim (t+1)^{-\frac{s}{2(\sigma-\sigma_1)}},\\
    |\xi|^{s}|\widehat{K_1}(t,\xi)| \chi_{\rm L}(|\xi|)&\lesssim \begin{cases} 
     t|\xi|^{s}e^{-|\xi|^{2\sigma-2\sigma_1}t} & \text{ if } s < 2\sigma_1\\
    |\xi|^{s - 2\sigma_1}e^{-|\xi|^{2(\sigma-\sigma_1)}t} & \text{ if } s \geq 2\sigma_1
    \end{cases}\\
    &\lesssim 
    \begin{cases} 
      (1+t)^{1-\frac{s}{2(\sigma-\sigma_1)}} &\text{ if } s < 2\sigma_1, \\
      (1+t)^{-\frac{s}{2(\sigma-\sigma_1)}+\frac{\sigma_1}{\sigma-\sigma_1}} &\text{ if } s \geq 2\sigma_1,
\end{cases}\\
|\xi|^{s}|\partial_t\widehat{K_0}(t,\xi)|\chi_{\rm L}(|\xi|) &\lesssim (1+t)^{-1-\frac{s}{2(\sigma-\sigma_1)}},\\
|\xi|^{s}|\partial_t\widehat{K_1}(t,\xi)| \chi_{\rm L}(|\xi|)&\lesssim (1+t)^{-\frac{s}{2(\sigma-\sigma_1)}}.
\end{align*}
Hence, this completes our proof.
\end{proof}
\begin{remark}
    This remark is to point out that the middle zone $\left\{\xi \in \mathbb{R}^n: \varepsilon^*<|\xi|<1/\varepsilon^* \right\}$ does not bring any difficulty to arrive at all desired estimates. More precisely, one recognizes from \eqref{2.8} that the real part of the characteristic roots $\lambda_{1,2}$ is negative there, hence some estimates for $u_{\rm M}(t,x)$ itself and its derivatives always possess an exponential decay $e^{-ct}$ with a suitable positive constant $c$. 
\end{remark}

%=====================================================================
\subsection{Proof of asymptotic profiles}\label{LinearAsymptotic.Sec}
Before giving the proof of Theorem \ref{Linear_Asym}, let us rewrite the representation formula of solutions to (\ref{Linear.Eq-Fourier}) into several parts as follows:
$$ \widehat{u}(t,\xi)= \big(\widehat{K^1_0}(t,\xi)+ \widehat{K^2_0}(t,\xi)\big)\widehat{u_0}(\xi)+ \big(\widehat{K^1_1}(t,\xi)+ \widehat{K^2_1}(t,\xi)\big)\widehat{u_1}(\xi), $$
where
\begin{align*}
\widehat{K^1_0}(t,\xi)&= \frac{-\lambda_2 e^{\lambda_1 t}}{\lambda_1- \lambda_2}, \quad &\widehat{K^2_0}(t,\xi)&= \frac{\lambda_1 e^{\lambda_2 t}}{\lambda_1- \lambda_2}, \\ 
\widehat{K^1_1}(t,\xi)&= \frac{e^{\lambda_1 t}}{\lambda_1- \lambda_2}, \quad &\widehat{K^2_1}(t,\xi)&= \frac{-e^{\lambda_2 t}}{\lambda_1- \lambda_2}.
\end{align*}
In order to prove Theorem \ref{Linear_Asym}, the following auxiliary results come into play.

\begin{proposition} \label{lemma2.6}
Let $s\ge 0$ and $j=0,\,1$. Then, the following estimates hold for $m \in [1,2)$:
\begin{align}
\big\|\partial_t^j |D|^s \big(K^1_{0\rm L}(t,x) \ast u_0(x)\big)(t,\cdot)\big\|_{L^2}&\lesssim (1+t)^{-\frac{n}{2(\sigma-\sigma_1)}(\frac{1}{m}-\frac{1}{2})-\frac{s}{2(\sigma-\sigma_1)}-j}\|u_0\|_{L^m}, \label{lemma2.6.1} \\
\big\|\partial_t^j |D|^s \big(K^2_{0\rm L}(t,x) \ast u_0(x)\big)(t,\cdot)\big\|_{L^2}&\lesssim (1+t)^{-\frac{n}{2\sigma_1}(\frac{1}{m}-\frac{1}{2})-\frac{s}{2\sigma_1}-j-\frac{\sigma-2\sigma_1}{\sigma_1}}\|u_0\|_{L^m}, \label{lemma2.6.2}
\end{align}
for any space dimensions $n\ge 1$, and
\begin{align}
\big\|\partial_t^j |D|^s \big(K^1_{1\rm L}(t,x) \ast u_1(x)\big)(t,\cdot)\big\|_{L^2}&\lesssim (1+t)^{-\frac{n}{2(\sigma-\sigma_1)}(\frac{1}{m}-\frac{1}{2})-\frac{s}{2(\sigma-\sigma_1)}-j+\frac{\sigma_1}{\sigma-\sigma_1}}\|u_1\|_{L^m}, \label{lemma2.6.3} \\
\big\|\partial_t^j |D|^s \big(K^2_{1\rm L}(t,x) \ast u_1(x)\big)(t,\cdot)\big\|_{L^2}&\lesssim (1+t)^{-\frac{n}{2\sigma_1}(\frac{1}{m}-\frac{1}{2})-\frac{s}{2\sigma_1}-j+1}\|u_1\|_{L^m}, \label{lemma2.6.4}
\end{align}
for any space dimensions $n> 2m_0\sigma_1$. Moreover, the following estimates hold for $m \in [1,2]$:
\begin{align}
\big\|\partial_t^j |D|^s \big(K_{0\rm H}(t,x) \ast u_0(x)\big)(t,\cdot)\big\|_{L^2} &\lesssim e^{-ct}\|u_0\|_{H^{s+2j\sigma_2}}, \label{lemma2.6.5} \\
\big\|\partial_t^j |D|^s \big(K_{1\rm H}(t,x) \ast u_1(x)\big)(t,\cdot)\big\|_{L^2} &\lesssim e^{-ct}\|u_1\|_{H^{[s+2(j-1)\sigma_2]^+}}, \label{lemma2.6.6}
\end{align}
for any space dimensions $n\ge 1$, where $c$ is a suitable positive constant.
\end{proposition}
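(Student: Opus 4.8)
The plan is to decompose each of the kernels $K^1_0,K^2_0,K^1_1,K^2_1$ and $K_0,K_1$ into their low- and high-frequency parts, as in the proof of Theorem~\ref{Linear_Decay}, and to treat the two frequency zones by completely different mechanisms: an $L^m$--$L^2$ argument resting on the Hausdorff--Young inequality in the low-frequency zone, and a pure $L^2$ (Parseval) argument with a controlled loss of regularity in the high-frequency zone. The middle zone never appears in Proposition~\ref{lemma2.6}: there the characteristic roots in \eqref{2.8} have strictly negative real part, so every piece decays like $e^{-ct}$, as recorded in the Remark above.

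\emph{Low frequencies.} Starting from \eqref{2.15} and the elementary identities $\dfrac{-\lambda_2}{\lambda_1-\lambda_2}=1+\dfrac{|\lambda_1|}{\lambda_1-\lambda_2}\sim 1$, $\dfrac{\lambda_1}{\lambda_1-\lambda_2}\sim -|\xi|^{2(\sigma-2\sigma_1)}$ and $\dfrac{1}{\lambda_1-\lambda_2}\sim \mu_1^{-1}|\xi|^{-2\sigma_1}$ (valid for $|\xi|\le\varepsilon^*$ since $\sigma>2\sigma_1$), together with $\partial_t^j e^{\lambda_\ell t}=\lambda_\ell^j e^{\lambda_\ell t}$, I would first record the pointwise bounds
\[
|\xi|^s|\partial_t^j\widehat{K^1_0}|\,\chi_{\rm L}\lesssim|\xi|^{s+2j(\sigma-\sigma_1)}e^{-c|\xi|^{2(\sigma-\sigma_1)}t},\qquad
|\xi|^s|\partial_t^j\widehat{K^2_0}|\,\chi_{\rm L}\lesssim|\xi|^{s+2(\sigma-2\sigma_1)+2j\sigma_1}e^{-c|\xi|^{2\sigma_1}t},
\]
\[
|\xi|^s|\partial_t^j\widehat{K^1_1}|\,\chi_{\rm L}\lesssim|\xi|^{s-2\sigma_1+2j(\sigma-\sigma_1)}e^{-c|\xi|^{2(\sigma-\sigma_1)}t},\qquad
|\xi|^s|\partial_t^j\widehat{K^2_1}|\,\chi_{\rm L}\lesssim|\xi|^{s+2(j-1)\sigma_1}e^{-c|\xi|^{2\sigma_1}t}.
\]
Then, repeating the H\"older--Hausdorff--Young reduction already performed in \eqref{2.18}--\eqref{2.20} (it applies verbatim to the split kernels), everything comes down to $L^{m_0}$-norms of the model type $\big\|\chi_{\rm L}(|\xi|)\,|\xi|^a e^{-c|\xi|^b t}\big\|_{L^{m_0}(\mathbb{R}^n)}$ with $\frac1{m_0}=\frac1m-\frac12$. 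Passing to polar coordinates, replacing $t$ by $t+1$, and substituting $\eta=\xi(t+1)^{1/b}$ yields
\[
\big\|\chi_{\rm L}(|\xi|)\,|\xi|^a e^{-c|\xi|^b t}\big\|_{L^{m_0}}\lesssim(1+t)^{-\frac{a}{b}-\frac{n}{b}\left(\frac1m-\frac12\right)},
\]
valid as soon as the residual integral $\int_0^\infty\rho^{am_0+n-1}e^{-cm_0\rho^b}\,d\rho$ converges, i.e. as soon as $am_0+n>0$. Plugging in $(a,b)=(s+2j(\sigma-\sigma_1),\,2(\sigma-\sigma_1))$ for $K^1_0$, $(s+2(\sigma-2\sigma_1)+2j\sigma_1,\,2\sigma_1)$ for $K^2_0$, $(s-2\sigma_1+2j(\sigma-\sigma_1),\,2(\sigma-\sigma_1))$ for $K^1_1$ and $(s+2(j-1)\sigma_1,\,2\sigma_1)$ for $K^2_1$ reproduces \eqref{lemma2.6.1}--\eqref{lemma2.6.4}; the integrability condition is automatic ($a\ge0$, so $n\ge1$ suffices) for $K^1_0,K^2_0$, while for $K^1_1,K^2_1$ the worst case $j=s=0$ gives $a=-2\sigma_1$ and hence the sharp threshold $n>2m_0\sigma_1$. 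When $\sigma_1=0$ the factor $e^{-c|\xi|^{2\sigma_1}t}$ is just $e^{-ct}$ and the relevant estimates hold trivially with exponential decay.

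\emph{High frequencies.} On $|\xi|\ge1/\varepsilon^*$ I would use \eqref{2.16} and the bounds $\dfrac{-\lambda_2}{\lambda_1-\lambda_2}\sim1$, $\dfrac{\lambda_1}{\lambda_1-\lambda_2}\sim-|\xi|^{2(\sigma-2\sigma_2)}$ (bounded, since $\sigma_2>\sigma/2$) to get
\[
|\xi|^s|\partial_t^j\widehat{K_0}|\,\chi_{\rm H}\lesssim\langle\xi\rangle^{s+2j\sigma_2}e^{-ct},\qquad
|\xi|^s|\partial_t^j\widehat{K_1}|\,\chi_{\rm H}\lesssim\langle\xi\rangle^{[s+2(j-1)\sigma_2]^+}e^{-ct}.
\]
For $t\ge1$ this is clear: the $e^{\lambda_2 t}$-contributions carry $e^{-c|\xi|^{2\sigma_2}t}$, which---$|\xi|^{2\sigma_2}$ being bounded below---beats any power of $|\xi|$ and leaves $e^{-ct}$; the $e^{\lambda_1 t}$-contributions carry $e^{-c|\xi|^{2(\sigma-\sigma_2)}t}$, which does the same when $\sigma_2<\sigma$, whereas at the visco-elastic endpoint $\sigma_2=\sigma$ one has $\lambda_1\to-1/\mu_2$ so that $e^{\lambda_1 t}\lesssim e^{-ct}$ uniformly and the surviving polynomial weight is genuinely absorbed into $\langle\xi\rangle^{s+2j\sigma_2}$. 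For $t\in(0,1]$ one has $e^{-ct}\sim1$ and it remains to dominate the polynomial weights by the advertised powers of $\langle\xi\rangle$, where the inequalities $\sigma-\sigma_2\le\sigma_2$ and $\sigma-2\sigma_2\le0$ enter. Parseval's formula then turns these pointwise bounds into \eqref{lemma2.6.5}--\eqref{lemma2.6.6}, for every $m\in[1,2]$ (the high-frequency estimates use only the $L^2$-regularity of the data, not $m$).

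\emph{Expected difficulty.} Once the decomposition is in place the estimates are essentially mechanical; the real effort is the bookkeeping of exponents, and in particular verifying that the loss of derivatives in the high-frequency zone is no worse than $\langle\xi\rangle^{s+2j\sigma_2}$ for $K_0$ and $\langle\xi\rangle^{[s+2(j-1)\sigma_2]^+}$ for $K_1$---the smoothing effect supplied by the stronger damping $\mu_2(-\Delta)^{\sigma_2}u_t$---which must be checked with some care at the endpoint $\sigma_2=\sigma$, where $\lambda_1$ stays bounded instead of tending to $-\infty$. On the low-frequency side the only subtle point is the convergence of $\int_0^{\varepsilon^*}\rho^{am_0+n-1}e^{-c\rho^b t}\,d\rho$ near $\rho=0$, which is exactly what forces---and shows the sharpness of---the restriction $n>2m_0\sigma_1$ for the $K_1$-kernels.
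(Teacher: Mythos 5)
Your proposal is correct and follows exactly the route the paper intends: its proof of Proposition~\ref{lemma2.6} consists of the single remark that one should reuse the asymptotics \eqref{2.15}--\eqref{2.16} and repeat the arguments of Theorem~\ref{Linear_Decay}, which is precisely what you carry out (the pointwise kernel bounds, the H\"older--Hausdorff--Young reduction to $L^{m_0}$ norms of $|\xi|^a e^{-c|\xi|^b t}$, the scaling $\eta=\xi(1+t)^{1/b}$, and the exponential absorption at high frequencies all match, and your exponents $(a,b)$ reproduce \eqref{lemma2.6.1}--\eqref{lemma2.6.6} together with the correct thresholds $n\ge1$ and $n>2m_0\sigma_1$). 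Your treatment is in fact more detailed than the paper's, including the endpoint $\sigma_2=\sigma$ and the degenerate case $\sigma_1=0$.
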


\begin{proof}
By using the asymptotic behavior of the characteristic roots \eqref{2.15} and \eqref{2.16} again, we may repeat some arguments in the proof of Theorem \ref{Linear_Decay} to conclude all the desired estimates.
\end{proof}

\begin{proposition} \label{proposition3.5}
Let $s\ge 0$ and $j=0,\,1$. Then, the following estimates hold for $m \in [1,2)$:
\begin{align}
&\Big\|\partial_t^j |D|^s \Big(\Big(K^1_{0\rm L}(t,x)- \mathfrak{F}^{-1}\Big(e^{-|\xi|^{2(\sigma-\sigma_1)}t}\chi_{\rm L}(|\xi|)\Big)\Big) \ast u_0(x)\Big)(t,\cdot)\Big\|_{L^2} \nonumber \\
&\qquad \lesssim
\begin{cases}
    (1+t)^{-\frac{n}{2(\sigma-\sigma_1)}(\frac{1}{m}-\frac{1}{2})-\frac{s}{2(\sigma-\sigma_1)}-j-\frac{\sigma-2\sigma_1}{\sigma-\sigma_1}}\|u_0\|_{L^m} &\text{ if } \sigma_1+\sigma_2>\sigma, \\
    (1+t)^{-\frac{n}{2(\sigma-\sigma_1)}(\frac{1}{m}-\frac{1}{2})-\frac{s}{2(\sigma-\sigma_1)}-j-\frac{\sigma_2-\sigma_1}{\sigma-\sigma_1}}\|u_0\|_{L^m} &\text{ if } \sigma_1+\sigma_2\le \sigma,
\end{cases} \label{proposition3.5.1}
\end{align}
for any space dimensions $n\ge 1$, and
\begin{align}
&\left\|\partial_t^j |D|^s \left(\left(K^1_{1\rm L}(t,x)- \mathfrak{F}^{-1}\left(\dfrac{e^{-|\xi|^{2(\sigma-\sigma_1)}t}}{|\xi|^{2\sigma_1}}\chi_{\rm L}(|\xi|)\right)\right) \ast u_1(x)\right)(t,\cdot)\right\|_{L^2} \nonumber \\
&\qquad \lesssim
\begin{cases}
    (1+t)^{-\frac{n}{2(\sigma-\sigma_1)}(\frac{1}{m}-\frac{1}{2})-\frac{s}{2(\sigma-\sigma_1)}-j-\frac{\sigma-3\sigma_1}{\sigma-\sigma_1}}\|u_1\|_{L^m} &\text{ if } \sigma_1+\sigma_2>\sigma, \\
    (1+t)^{-\frac{n}{2(\sigma-\sigma_1)}(\frac{1}{m}-\frac{1}{2})-\frac{s}{2(\sigma-\sigma_1)}-j-\frac{\sigma_2-2\sigma_1}{\sigma-\sigma_1}}\|u_1\|_{L^m} &\text{ if } \sigma_1+\sigma_2\le \sigma,
\end{cases} \label{proposition3.5.2}
\end{align}
for any space dimensions $n> 2m_0\sigma_1$.
\end{proposition}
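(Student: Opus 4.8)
The plan is to estimate the difference kernels on the low-frequency zone by a Taylor-type expansion of the characteristic roots $\lambda_{1,2}(\xi)$ in powers of $|\xi|$, then integrate in $\xi$ exactly as in the proof of Theorem~\ref{Linear_Decay}. For $|\xi|\le \e^*$ write $B(\xi):=\mu_1|\xi|^{2\sigma_1}+\mu_2|\xi|^{2\sigma_2}$ and recall
\[
\lambda_1=\frac{-B+\sqrt{B^2-4|\xi|^{2\sigma}}}{2},\qquad \lambda_1-\lambda_2=\sqrt{B^2-4|\xi|^{2\sigma}}.
\]
Since $0\le\sigma_1<\sigma/2<\sigma_2$, on small frequencies $B\sim \mu_1|\xi|^{2\sigma_1}$ and $|\xi|^{2\sigma}/B^2\sim |\xi|^{2(\sigma-2\sigma_1)}\to 0$, so a binomial expansion gives
\[
\lambda_1=-\frac{|\xi|^{2\sigma}}{B}\Big(1+O\big(|\xi|^{2(\sigma-2\sigma_1)}\big)\Big)
=-\frac{|\xi|^{2\sigma}}{\mu_1|\xi|^{2\sigma_1}}\Big(1-\tfrac{\mu_2}{\mu_1}|\xi|^{2(\sigma_2-\sigma_1)}+\cdots\Big)\Big(1+O\big(|\xi|^{2(\sigma-2\sigma_1)}\big)\Big),
\]
whose leading term is $-\mu_1^{-1}|\xi|^{2(\sigma-\sigma_1)}$ and whose correction is of order $|\xi|^{2(\sigma-\sigma_1)}\big(|\xi|^{2(\sigma_2-\sigma_1)}+|\xi|^{2(\sigma-2\sigma_1)}\big)$; the smaller of the two exponents $\sigma_2-\sigma_1$ and $\sigma-2\sigma_1$ is what produces the dichotomy $\sigma_1+\sigma_2\gtrless\sigma$ in \eqref{proposition3.5.1}. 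Likewise $\widehat{K^1_{1}}=e^{\lambda_1 t}/(\lambda_1-\lambda_2)$ and $(\lambda_1-\lambda_2)^{-1}=B^{-1}(1-4|\xi|^{2\sigma}/B^2)^{-1/2}=\mu_1^{-1}|\xi|^{-2\sigma_1}\big(1+O(|\xi|^{2(\sigma_2-\sigma_1)})+O(|\xi|^{2(\sigma-2\sigma_1)})\big)$, which after renormalizing $\mu_1$ into the definition of $\mathcal G_1$ gives the analogous gain with an extra $|\xi|^{-2\sigma_1}$, explaining the shifted exponents in \eqref{proposition3.5.2}.

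Concretely, I would split each difference as
\[
\widehat{K^1_{0\rm L}}-e^{-|\xi|^{2(\sigma-\sigma_1)}t}\chi_{\rm L}
=\underbrace{\Big(\tfrac{-\lambda_2}{\lambda_1-\lambda_2}-1\Big)e^{\lambda_1 t}\chi_{\rm L}}_{\text{(I)}}
+\underbrace{\big(e^{\lambda_1 t}-e^{-|\xi|^{2(\sigma-\sigma_1)}t}\big)\chi_{\rm L}}_{\text{(II)}},
\]
and similarly for $\widehat{K^1_{1\rm L}}$. For (I) one uses $-\lambda_2/(\lambda_1-\lambda_2)-1=\lambda_1/(\lambda_1-\lambda_2)$, which by the expansions above is $O\big(|\xi|^{2(\sigma-2\sigma_1)}\big)$, i.e. an extra power $|\xi|^{2(\sigma-2\sigma_1)}$ against the already-controlled kernel $e^{\lambda_1 t}\sim e^{-c|\xi|^{2(\sigma-\sigma_1)}t}$. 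For (II) one writes $e^{\lambda_1 t}-e^{-|\xi|^{2(\sigma-\sigma_1)}t}=e^{-|\xi|^{2(\sigma-\sigma_1)}t}\big(e^{(\lambda_1+|\xi|^{2(\sigma-\sigma_1)})t}-1\big)$ and bounds $|e^{z}-1|\lesssim |z|e^{|z|}$ with $z=(\lambda_1+|\xi|^{2(\sigma-\sigma_1)})t$; since $\lambda_1+\mu_1^{-1}|\xi|^{2(\sigma-\sigma_1)}=O\big(|\xi|^{2(\sigma-\sigma_1)}(|\xi|^{2(\sigma_2-\sigma_1)}+|\xi|^{2(\sigma-2\sigma_1)})\big)$ (here one must absorb $\mu_1$ into the exponential by redefining, as the statement does via $\mathcal G_0,\mathcal G_1$ — I will carry the harmless constant), the factor $|z|$ supplies the decisive gain $t|\xi|^{2(\sigma-\sigma_1)+2\min\{\sigma_2-\sigma_1,\sigma-2\sigma_1\}}$, while $e^{|z|}$ is reabsorbed into a slightly weaker Gaussian $e^{-c'|\xi|^{2(\sigma-\sigma_1)}t}$ valid on $|\xi|\le\e^*$. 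One extra subtlety: the factor $t$ can be traded against one more power of $|\xi|^{2(\sigma-\sigma_1)}$ only up to $t\sim 1$ scales cleanly, but for the difference estimates it is precisely the presence of this $t$ times the higher power of $|\xi|$ that, after the substitution $\eta=\xi(1+t)^{1/(2(\sigma-\sigma_1))}$, yields the claimed decay rate with the advertised surplus $-\tfrac{\sigma-2\sigma_1}{\sigma-\sigma_1}$ or $-\tfrac{\sigma_2-\sigma_1}{\sigma-\sigma_1}$ (respectively $-\tfrac{\sigma-3\sigma_1}{\sigma-\sigma_1}$, $-\tfrac{\sigma_2-2\sigma_1}{\sigma-\sigma_1}$ after the extra $|\xi|^{-2\sigma_1}$).

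Having the pointwise bounds
\[
\big||\xi|^s\partial_t^j(\widehat{K^1_{0\rm L}}-e^{-|\xi|^{2(\sigma-\sigma_1)}t}\chi_{\rm L})\big|
\lesssim |\xi|^{s+2j(\sigma-\sigma_1)+2(\sigma-\sigma_1)+2\kappa}\,(1+t)\,e^{-c|\xi|^{2(\sigma-\sigma_1)}t},
\]
with $\kappa=\min\{\sigma_2-\sigma_1,\sigma-2\sigma_1\}$ for the $K_0$ case and $\kappa=\min\{\sigma_2-2\sigma_1,\sigma-3\sigma_1\}$ together with the shift $s\mapsto s-2\sigma_1$ for the $K_1$ case, I would finish exactly as in Theorem~\ref{Linear_Decay}: pass to the $L^2$ norm, apply Hölder and Hausdorff–Young to replace $\|\widehat{u_i}\|_{L^{m'}}$ by $\|u_i\|_{L^m}$, raise to the power $m_0$, and evaluate $\int_{|\xi|\le\e^*}|\xi|^{m_0(\cdots)+n-1}e^{-cm_0|\xi|^{2(\sigma-\sigma_1)}t}\,d|\xi|$ by the change of variables $\eta=\xi(1+t)^{1/(2(\sigma-\sigma_1))}$. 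The integral converges because of the hypothesis $n>2m_0\sigma_1$ in the $K_1$ case (and for all $n\ge1$ in the $K_0$ case, where the integrand has a nonnegative power of $|\xi|$), and collecting powers of $(1+t)$ gives precisely \eqref{proposition3.5.1}–\eqref{proposition3.5.2}. The main obstacle is purely bookkeeping: keeping track of which of the two correction exponents $\sigma_2-\sigma_1$ and $\sigma-2\sigma_1$ dominates — equivalently the sign of $\sigma_1+\sigma_2-\sigma$ — and making sure the auxiliary Gaussian $e^{-c|\xi|^{2(\sigma-\sigma_1)}t}$ is kept with a strictly positive $c$ after absorbing the $e^{|z|}$ factor; once the expansion of $\lambda_1$ and of $(\lambda_1-\lambda_2)^{-1}$ is pinned down to the stated order, the rest is a verbatim repetition of the low-frequency computation already carried out above.
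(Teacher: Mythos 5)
Your proposal follows essentially the same route as the paper's proof: the same splitting of the kernel difference into the amplitude-correction term $\frac{\lambda_1}{\lambda_1-\lambda_2}e^{\lambda_1 t}$ and the exponential difference $e^{\lambda_1 t}-e^{-|\xi|^{2(\sigma-\sigma_1)}t}$, the same binomial expansions of $-\lambda_1$ and $(\lambda_1-\lambda_2)^{-1}$ isolating the competing correction exponents $\sigma-2\sigma_1$ and $\sigma_2-\sigma_1$ (hence the dichotomy on the sign of $\sigma_1+\sigma_2-\sigma$), a mean-value-type bound for the exponential difference, and the identical Hölder/Hausdorff--Young plus change-of-variables integration from Theorem~\ref{Linear_Decay}. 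The only quibble is that your single displayed pointwise bound with the factor $(1+t)|\xi|^{2(\sigma-\sigma_1)+2\kappa}$ does not literally dominate the amplitude-correction term (which carries $|\xi|^{2(\sigma-2\sigma_1)}$ with no $t$), so the two contributions should be kept separate as in your prose — but since both integrate to the same rate, the conclusion is unaffected.
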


\begin{proof}
In order to indicate Proposition \ref{proposition3.5}, at first, it is reasonable to prove the following  point-wise estimates:
\begin{align}
&|\xi|^s \chi_{\rm L}(|\xi|)\Bigg|\partial^j_t\Big(\widehat{K^1_0}(t,\xi)- e^{-|\xi|^{2(\sigma-\sigma_1)}t}\Big)\Bigg| \nonumber \\
&\qquad \lesssim 
\begin{cases}
    e^{-c|\xi|^{2(\sigma-\sigma_1)}t}\Big(t\,|\xi|^{s+2(2\sigma-3\sigma_1)+2j(\sigma-\sigma_1)}+ |\xi|^{s+2(\sigma-2\sigma_1)+2j(\sigma-\sigma_1)}\Big) &\text{ if }\sigma_1+\sigma_2>\sigma, \\
    e^{-c|\xi|^{2(\sigma-\sigma_1)}t}\Big(t|\xi|^{s+2(\sigma+\sigma_2-2\sigma_1)+2j(\sigma-\sigma_1)}+|\xi|^{s+2(\sigma-2\sigma_1)+2j\sigma_2}\Big) &\text{ if }\sigma_1+\sigma_2\le \sigma,
\end{cases} \label{pro3.1.1}
\end{align}
and
\begin{align}
&|\xi|^s \chi_{\rm L}(|\xi|)\left|\partial^j_t \left(\widehat{K^1_1}(t,\xi)- \frac{e^{-|\xi|^{2(\sigma-\sigma_1)}t}}{|\xi|^{2\sigma_1}}\right)\right| \nonumber \\
&\qquad \lesssim
\begin{cases}
    e^{-c|\xi|^{2(\sigma-\sigma_1)}t}\Big(t\,|\xi|^{s+4(\sigma-2\sigma_1)+2j(\sigma-\sigma_1)}+ |\xi|^{s+2(\sigma-3\sigma_1)+2j(\sigma-\sigma_1)}\Big) &\text{ if }\sigma_1+\sigma_2>\sigma, \\
    e^{-c|\xi|^{2(\sigma-\sigma_1)}t}\Big(t|\xi|^{s+2(\sigma+\sigma_2-3\sigma_1)+2j(\sigma-\sigma_1)}+|\xi|^{s+2(\sigma_2-2\sigma_1)+2j(\sigma-\sigma_1)}\Big)&\text{ if }\sigma_1+\sigma_2\le \sigma,
\end{cases} \label{pro3.1.2}
\end{align}
where $c$ is a suitable positive constant. Then, following the proof of Theorem \ref{Linear_Decay} we may conclude all the desired statements. In the first step, let us consider (\ref{pro3.1.1}) in the case $j=0$ to present $\widehat{K^1_0}(t,\xi)$ as follows: 
$$ \widehat{K^1_0}(t,\xi)= \frac{-\lambda_2 e^{\lambda_1 t}}{\lambda_1- \lambda_2}= e^{\lambda_1 t}- \frac{\lambda_1 e^{\lambda_1 t}}{\lambda_1- \lambda_2}. $$
By the mean value theorem, we obtain
$$ e^{\lambda_1 t}- e^{-|\xi|^{2(\sigma-\sigma_1)}t}= t\,\big(\lambda_1+ |\xi|^{2(\sigma-\sigma_1)}\big)e^{(\omega\lambda_1-(1-\omega)|\xi|^{2(\sigma-\sigma_1)})t}, $$
where $\omega \in [0,1]$. Consequently, we derive
\begin{align*}
    \Big|e^{\lambda_1 t}- e^{-|\xi|^{2(\sigma-\sigma_1)}t}\Big| &\le t\,\Big|-\lambda_1- |\xi|^{2(\sigma-\sigma_1)}\Big| e^{-\min\{-\lambda_1,\,|\xi|^{2(\sigma-\sigma_1)}\}t} \\
    &\le t\,\Big|-\lambda_1- |\xi|^{2(\sigma-\sigma_1)}\Big| e^{-c|\xi|^{2(\sigma-\sigma_1)}\}t}
\end{align*}
by \eqref{2.15}, where $c$ is a suitable positive constant. Using Newton's binomial theorem, we re-write $-\lambda_1$ for small frequencies in the form
$$ -\lambda_1= \frac{1}{2}\left(\big(|\xi|^{2\sigma_1}+ |\xi|^{2\sigma_2}\big)- \sqrt{\big(|\xi|^{2\sigma_1}+ |\xi|^{2\sigma_2}\big)^2-4|\xi|^{2\sigma}}\right)=: q_1\Big(1-\sqrt{1- q_2^2}\Big), $$
where we introduce $q_1:= \frac{1}{2}\big(|\xi|^{2\sigma_1}+ |\xi|^{2\sigma_2}\big)$ and $q_2:= \f{2|\xi|^\sigma}{|\xi|^{2\sigma_1}+ |\xi|^{2\sigma_2}}$. It is clear that $q_2< 1$ for small frequencies. For this reason, applying Newton's binomial theorem gives
\begin{align*}
-\lambda_1 &= q_1\left(1- \Big(1- \frac{1}{2}q_2^2- \frac{1}{8}q_2^4- o\big(q_2^4\big)\Big)\right)= q_1\left(\frac{1}{2}q_2^2+ \frac{1}{8}q_2^4+ o\big(q_2^4\big)\right) \\
&= \frac{|\xi|^{2\sigma}}{|\xi|^{2\sigma_1}+ |\xi|^{2\sigma_2}}+ \frac{|\xi|^{4\sigma}}{(|\xi|^{2\sigma_1}+ |\xi|^{2\sigma_2})^3}+ o\left(\frac{|\xi|^{4\sigma}}{(|\xi|^{2\sigma_1}+ |\xi|^{2\sigma_2})^3}\right).
\end{align*}
Hence, a standard calculation leads to
\begin{align*}
-\lambda_1- |\xi|^{2(\sigma-\sigma_1)} &= -\frac{|\xi|^{2\sigma+2(\sigma_2-\sigma_1)}}{|\xi|^{2\sigma_1}+ |\xi|^{2\sigma_2}}+ \frac{|\xi|^{4\sigma}}{(|\xi|^{2\sigma_1}+ |\xi|^{2\sigma_2})^3}+ o\left(\frac{|\xi|^{4\sigma}}{(|\xi|^{2\sigma_1}+ |\xi|^{2\sigma_2})^3}\right) \\ 
&= \frac{|\xi|^{4\sigma}}{(|\xi|^{2\sigma_1}+ |\xi|^{2\sigma_2})^3}\left(1- |\xi|^{2(\sigma_1+\sigma_2-\sigma)}\big(1+|\xi|^{2(\sigma_2-\sigma_1)}\big)^2\right)+ o\left(\frac{|\xi|^{4\sigma}}{(|\xi|^{2\sigma_1}+ |\xi|^{2\sigma_2})^3}\right) \\
&\quad \begin{cases}
    >0 &\text{ if }\sigma_1+\sigma_2> \sigma, \\
    <0 &\text{ if }\sigma_1+\sigma_2\le \sigma,
\end{cases}
\end{align*}
for small frequencies. This implies immediately the following relation
$$ \Big|-\lambda_1- |\xi|^{2(\sigma-\sigma_1)}\Big| \le
\begin{cases}
    |\xi|^{2(2\sigma-3\sigma_1)} &\text{ if }\sigma_1+\sigma_2>\sigma, \\
    |\xi|^{2(\sigma+\sigma_2-2\sigma_1)} &\text{ if }\sigma_1+\sigma_2\le \sigma.
\end{cases} $$
Therefore, we get
\begin{equation} \label{pro3.1.3}
\Big|e^{\lambda_1 t}- e^{-|\xi|^{2(\sigma-\sigma_1)}t}\Big| \lesssim 
\begin{cases}
    t\,|\xi|^{2(2\sigma-3\sigma_1)}e^{-|\xi|^{2(\sigma-\sigma_1)}t} &\text{ if }\sigma_1+\sigma_2>\sigma, \\
    t\,|\xi|^{2(\sigma+\sigma_2-2\sigma_1)}e^{-|\xi|^{2(\sigma-\sigma_1)}t} &\text{ if }\sigma_1+\sigma_2\le \sigma.
\end{cases}
\end{equation}
Thanks to the asymptotic behavior of characteristic roots for small frequencies, we may arrive at the following estimates:
\begin{align}
&|\xi|^s \chi_{\rm L}(|\xi|)\Big|\widehat{K^1_0}(t,\xi)- e^{-|\xi|^{2(\sigma-\sigma_1)}t}\Big| \nonumber \\
&\qquad \lesssim 
\begin{cases}
    e^{-|\xi|^{2(\sigma-\sigma_1)}t}\Big(t\,|\xi|^{2(2\sigma-3\sigma_1)}+ |\xi|^{2(\sigma-2\sigma_1)}\Big) &\text{ if }\sigma_1+\sigma_2>\sigma, \\
    e^{-|\xi|^{2(\sigma-\sigma_1)}t}\Big(t\,|\xi|^{2(\sigma+\sigma_2-2\sigma_1)}+ |\xi|^{2(\sigma-2\sigma_1)}\Big) &\text{ if }\sigma_1+\sigma_2\le \sigma.
\end{cases} \label{pro3.1.4}
\end{align}
Hence, the estimate (\ref{pro3.1.1}) is true for $j=0$. Now in order to estimate (\ref{pro3.1.2}) in the case $j=0$, we can re-write
$$ \widehat{K^1_1}(t,\xi)= \frac{e^{\lambda_1 t}}{\lambda_1- \lambda_2}= \frac{e^{\lambda_1 t}}{|\xi|^{2\sigma_1}}- \frac{1}{|\xi|^{2\sigma_1}}\left(1- \frac{|\xi|^{2\sigma_1}}{\lambda_1- \lambda_2}\right)e^{\lambda_1 t}. $$
Obviously, we have
$$ \widehat{K^1_1}(t,\xi)- \frac{e^{-|\xi|^{2(\sigma-\sigma_1)}t}}{|\xi|^{2\sigma_1}}= \frac{e^{\lambda_1 t}- e^{-|\xi|^{2(\sigma-\sigma_1)}t}}{|\xi|^{2\sigma_1}}- \frac{1}{|\xi|^{2\sigma_1}}\left(1- \frac{|\xi|^{2\sigma_1}}{\lambda_1- \lambda_2}\right)e^{\lambda_1 t}. $$
Moreover, we notice that it holds
\begin{align*}
    1- \frac{|\xi|^{2\sigma_1}}{\lambda_1- \lambda_2} &= \frac{2|\xi|^{2(\sigma_1+\sigma_2)}+|\xi|^{4\sigma_2}-4|\xi|^{2\sigma}}{\sqrt{(|\xi|^{2\sigma_1}+ |\xi|^{2\sigma_2})^2-4|\xi|^{2\sigma}}\big(\sqrt{(|\xi|^{2\sigma_1}+ |\xi|^{2\sigma_2})^2-4|\xi|^{2\sigma}}+ |\xi|^{2\sigma_1}\big)} \\
    &\sim 
    \begin{cases}
        -|\xi|^{2(\sigma- 2\sigma_1)} &\text{ if }\sigma_1+\sigma_2>\sigma, \\
        |\xi|^{2(\sigma_2- \sigma_1)} &\text{ if }\sigma_1+\sigma_2\le \sigma.
    \end{cases}
\end{align*}
Using again the estimate (\ref{pro3.1.3}) and the asymptotic behavior of characteristic roots for small frequencies, we may conclude
\begin{align}
&|\xi|^s \chi_{\rm L}(|\xi|)\left|\widehat{K^1_1}(t,\xi)- \frac{e^{-|\xi|^{2(\sigma-\sigma_1)}t}}{|\xi|^{2\sigma_1}}\right| \nonumber \\
&\qquad \lesssim
\begin{cases}
    e^{-c|\xi|^{2(\sigma-\sigma_1)}t}\Big(t\,|\xi|^{s+4(\sigma-2\sigma_1)}+ |\xi|^{s+2(\sigma-3\sigma_1)}\Big) &\text{ if }\sigma_1+\sigma_2>\sigma, \\
    e^{-c|\xi|^{2(\sigma-\sigma_1)}t}\Big(t\,|\xi|^{s+2(\sigma+\sigma_2-3\sigma_1)}+ |\xi|^{s+2(\sigma_2-2\sigma_1)}\Big) &\text{ if }\sigma_1+\sigma_2\le \sigma.
\end{cases} \label{pro3.1.5}
\end{align}
Therefore, the estimate (\ref{pro3.1.2}) is true for $j=0$. By analogous arguments, we also obtain the following estimates for $j=1$:
\begin{align}
&|\xi|^s \chi_{\rm L}(|\xi|)\Big|\partial_t\Big(\widehat{K^1_0}(t,\xi)- e^{-|\xi|^{2(\sigma-\sigma_1)}t}\Big)\Big| \nonumber \\
&\qquad \lesssim 
\begin{cases}
    e^{-|\xi|^{2(\sigma-\sigma_1)}t}\Big(t\,|\xi|^{2(3\sigma-4\sigma_1)}+ |\xi|^{2(2\sigma-3\sigma_1)}\Big) &\text{ if }\sigma_1+\sigma_2>\sigma, \\
    e^{-|\xi|^{2(\sigma-\sigma_1)}t}\Big(t\,|\xi|^{s+2(2\sigma+\sigma_2-3\sigma_1)}+ |\xi|^{2(\sigma+\sigma_2-2\sigma_1)}\Big) &\text{ if }\sigma_1+\sigma_2\le \sigma,
\end{cases} \label{pro3.1.6}
\end{align}
and
\begin{align}
&|\xi|^s \chi_{\rm L}(|\xi|)\left|\partial_t \left(\widehat{K^1_1}(t,\xi)- \frac{e^{-|\xi|^{2(\sigma-\sigma_1)}t}}{|\xi|^{2\sigma_1}}\right)\right| \nonumber \\
&\qquad \lesssim
\begin{cases}
    e^{-c|\xi|^{2(\sigma-\sigma_1)}t}\Big(t\,|\xi|^{s+2(3\sigma-5\sigma_1)}+ |\xi|^{s+4(\sigma-2\sigma_1)}\Big) &\text{ if }\sigma_1+\sigma_2>\sigma, \\
    e^{-c|\xi|^{2(\sigma-\sigma_1)}t}\Big(t\,|\xi|^{s+2(2\sigma+\sigma_2-4\sigma_1)}+ |\xi|^{s+2(\sigma+\sigma_2-3\sigma_1)}\Big) &\text{ if }\sigma_1+\sigma_2\le \sigma. 
\end{cases} \label{pro3.1.7}
\end{align}
Thus, it is obvious that all the estimates from (\ref{pro3.1.4}) to (\ref{pro3.1.7}) imply immediately (\ref{pro3.1.1}) and (\ref{pro3.1.2}). This completes our proof.
\end{proof}

\begin{proposition} \label{proposition3.6}
Let $s\ge 0$ and $j=0,\,1$. Then, the following estimates hold for $m \in [1,2)$:
\begin{align}
&\Big\|\partial_t^j |D|^s \Big(\Big(K^1_0(t,x)- \mathcal{G}_0(t,x)\Big) \ast u_0(x)\Big)(t,\cdot)\Big\|_{L^2} \nonumber \\
&\qquad \lesssim 
\begin{cases}
    (1+t)^{-\frac{n}{2(\sigma-\sigma_1)}(\frac{1}{m}-\frac{1}{2})-\frac{s}{2(\sigma-\sigma_1)}-j-\frac{\sigma-2\sigma_1}{\sigma-\sigma_1}}\|u_0\|_{L^m} &\text{ if } \sigma_1+\sigma_2>\sigma, \\
    (1+t)^{-\frac{n}{2(\sigma-\sigma_1)}(\frac{1}{m}-\frac{1}{2})-\frac{s}{2(\sigma-\sigma_1)}-j-\frac{\sigma_2-\sigma_1}{\sigma-\sigma_1}}\|u_0\|_{L^m} &\text{ if } \sigma_1+\sigma_2\le \sigma,
\end{cases} \nonumber \\
&\qquad \qquad + e^{-ct}\big(\|u_0\|_{H^{s+2j\sigma_2}}+ \|u_0\|_{H^{s+2j(\sigma-\sigma_1)}}\big) \label{proposition3.6.1}
\end{align}
for any space dimensions $n\ge 1$, and
\begin{align}
&\Big\|\partial_t^j |D|^s \Big(\Big(K^1_1(t,x)- \mathcal{G}_1(t,x)\Big) \ast u_1(x)\Big)(t,\cdot)\Big\|_{L^2} \nonumber \\
&\qquad \lesssim
\begin{cases}
    (1+t)^{-\frac{n}{2(\sigma-\sigma_1)}(\frac{1}{m}-\frac{1}{2})-\frac{s}{2(\sigma-\sigma_1)}-j-\frac{\sigma-3\sigma_1}{\sigma-\sigma_1}}\|u_1\|_{L^m} &\text{ if } \sigma_1+\sigma_2>\sigma, \\
    (1+t)^{-\frac{n}{2(\sigma-\sigma_1)}(\frac{1}{m}-\frac{1}{2})-\frac{s}{2(\sigma-\sigma_1)}-j-\frac{\sigma_2-2\sigma_1}{\sigma-\sigma_1}}\|u_1\|_{L^m} &\text{ if } \sigma_1+\sigma_2\le \sigma,
\end{cases} \nonumber \\
&\qquad \qquad+ e^{-ct}\big(\|u_1\|_{H^{[s+2(j-1)\sigma_2]^+}}+ \|u_1\|_{H^{[s+2j(\sigma-\sigma_1)-2\sigma_1]^+}}\big) \label{proposition3.6.2}
\end{align}
for any space dimensions $n> 2m_0\sigma_1$, where $c$ is a suitable positive constant.
\end{proposition}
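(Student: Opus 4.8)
The plan is to assemble the estimate from a frequency decomposition, reading the low-frequency part directly off Proposition \ref{proposition3.5} and disposing of the remaining frequencies with exponential bounds of the type already established. Writing $\mathcal{G}_{0,k}:=\mathfrak{F}^{-1}\big(e^{-|\xi|^{2(\sigma-\sigma_1)}t}\chi_k(|\xi|)\big)$ and $\mathcal{G}_{1,k}:=\mathfrak{F}^{-1}\big(|\xi|^{-2\sigma_1}e^{-|\xi|^{2(\sigma-\sigma_1)}t}\chi_k(|\xi|)\big)$ for $k\in\{\mathrm L,\mathrm M,\mathrm H\}$, I would split, for $\ell\in\{0,1\}$,
\begin{equation*}
K^1_\ell- \mathcal{G}_\ell= \big(K^1_{\ell\mathrm L}- \mathcal{G}_{\ell,\mathrm L}\big)+ \big(K_{\ell\mathrm M}- \mathcal{G}_{\ell,\mathrm M}\big)+ \big(K_{\ell\mathrm H}- \mathcal{G}_{\ell,\mathrm H}\big),
\end{equation*}
using that away from a neighbourhood of $\xi=0$ the ``parabolic-like'' piece $K^1_\ell$ is replaced by the full kernel $K_\ell$. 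By Plancherel's identity and Young's inequality it then suffices to control the $L^2$ norm of $|\xi|^s\partial_t^j$ times each symbol, paired either with $\|u_\ell\|_{L^m}$ (through an $L^{m_0}$ bound, exactly as in \eqref{2.19}) or with a Sobolev norm of $u_\ell$ (through an $L^\infty$ or $L^2$ bound).

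\emph{Low frequencies.} The brackets $K^1_{0\mathrm L}-\mathcal{G}_{0,\mathrm L}$ and $K^1_{1\mathrm L}-\mathcal{G}_{1,\mathrm L}$ are precisely the quantities estimated in \eqref{proposition3.5.1} and \eqref{proposition3.5.2}; hence Proposition \ref{proposition3.5} reproduces verbatim the polynomial-decay terms on the right-hand sides of \eqref{proposition3.6.1} and \eqref{proposition3.6.2}, together with $\|u_0\|_{L^m}$, resp. $\|u_1\|_{L^m}$, and the case split $\sigma_1+\sigma_2\gtrless\sigma$ is inherited as it stands. The dimensional restriction $n>2m_0\sigma_1$ for the $K_1$-kernel is exactly what is needed there for the $m_0$-integrability of $|\xi|^{-2\sigma_1}\chi_{\mathrm L}(|\xi|)$ near the origin.

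\emph{Middle and high frequencies.} On $\{\varepsilon^*/2\le|\xi|\le 2/\varepsilon^*\}$ the characteristic roots satisfy $\mathrm{Re}\,\lambda_{1,2}\le -c<0$ (cf.\ the Remark after Theorem \ref{Linear_Decay}) and $e^{-|\xi|^{2(\sigma-\sigma_1)}t}\le e^{-ct}$, so both $K_{\ell\mathrm M}$ and $\mathcal{G}_{\ell,\mathrm M}$ contribute $\lesssim e^{-ct}\|u_\ell\|_{L^2}$. On $\{|\xi|\ge 1/\varepsilon^*\}$, the $K_{\ell\mathrm H}$-contributions are handled exactly as in \eqref{lemma2.6.5}--\eqref{lemma2.6.6} and yield $e^{-ct}\|u_0\|_{H^{s+2j\sigma_2}}$, resp.\ $e^{-ct}\|u_1\|_{H^{[s+2(j-1)\sigma_2]^+}}$; for the Gaussian side one has $\big||\xi|^s\partial_t^j\widehat{\mathcal{G}_{0,\mathrm H}}\big|\lesssim|\xi|^{s+2j(\sigma-\sigma_1)}e^{-|\xi|^{2(\sigma-\sigma_1)}t}$ and $\big||\xi|^s\partial_t^j\widehat{\mathcal{G}_{1,\mathrm H}}\big|\lesssim|\xi|^{s+2j(\sigma-\sigma_1)-2\sigma_1}e^{-|\xi|^{2(\sigma-\sigma_1)}t}$, where, since $\sigma-\sigma_1>0$, the exponential factor is $\le e^{-ct}$ on this set; one then spends part of it to make the weighted symbol $L^2$-integrable when $t\ge1$, while for $t\in(0,1]$ the surviving power of $|\xi|$ is moved onto the regularity of the data, producing the remaining terms $e^{-ct}\|u_0\|_{H^{s+2j(\sigma-\sigma_1)}}$ and $e^{-ct}\|u_1\|_{H^{[s+2j(\sigma-\sigma_1)-2\sigma_1]^+}}$. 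Summing the three zones gives \eqref{proposition3.6.1} and \eqref{proposition3.6.2}.

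The only genuinely delicate point I anticipate is this last, purely finite-regularity bookkeeping in the high-frequency zone: one must check in each of the symbols that the power of $|\xi|$ not killed by the exponential does not exceed the Sobolev index advertised on the right-hand side. On the $K$-side this uses $\sigma\le2\sigma_2$, and the worst case is the visco-elastic one $\sigma_2=\sigma$, in which $e^{-|\xi|^{2(\sigma-\sigma_2)}t}$ degenerates to $e^{-t}$ and provides no decay in $|\xi|$ at all, so that the full loss $|\xi|^{s+2j\sigma_2}$ (resp.\ $|\xi|^{s+2(j-1)\sigma_2}$) must be paid by the data norm. Everything else is a direct repetition of the arguments in the proofs of Theorem \ref{Linear_Decay} and Propositions \ref{lemma2.6}--\ref{proposition3.5}.
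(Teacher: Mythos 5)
Your proposal is correct and follows essentially the same route as the paper: split into frequency zones, read the low-frequency difference off Proposition \ref{proposition3.5}, kill the middle zone by the negativity of $\mathrm{Re}\,\lambda_{1,2}$, bound the high-frequency kernel as in \eqref{lemma2.6.5}--\eqref{lemma2.6.6}, and estimate the high-frequency Gaussian tail directly to produce $e^{-ct}\|u_0\|_{H^{s+2j(\sigma-\sigma_1)}}$ and $e^{-ct}\|u_1\|_{H^{[s+2j(\sigma-\sigma_1)-2\sigma_1]^+}}$. The only blemish is that your displayed splitting replaces $K^1_{\ell\mathrm M}$, $K^1_{\ell\mathrm H}$ by the full kernels $K_{\ell\mathrm M}$, $K_{\ell\mathrm H}$, so it is not literally an identity; since the discrepancy $K^2_{\ell\mathrm M}+K^2_{\ell\mathrm H}$ is also exponentially decaying on those zones, this does not affect the conclusion.
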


\begin{proof}
At first, let us re-write the expression in the $L^2$ norm of (\ref{proposition3.6.1}) as follows:
\begin{align*}
&\partial_t^j |D|^s \Big(\Big(K^1_0(t,x)- \mathfrak{F}^{-1}\Big(e^{-|\xi|^{2(\sigma-\sigma_1)}t}\Big)\Big) \ast u_0(x)\Big) \\ 
&\qquad= \partial_t^j |D|^s \Big(\Big(K^1_{0\rm L}(t,x)- \mathfrak{F}^{-1}\Big(e^{-|\xi|^{2(\sigma-\sigma_1)}t}\chi_{\rm L}(|\xi|)\Big)\Big) \ast u_0(x)\Big) \\
&\qquad \quad+ \partial_t^j |D|^s \big(K^1_{0\rm H}(t,x) \ast u_0(x)\big)+ \partial_t^j |D|^s \Big(\mathfrak{F}^{-1}\Big(e^{-|\xi|^{2(\sigma-\sigma_1)}t}\chi_{\rm H}(|\xi|)\Big) \ast u_0(x)\Big).
\end{align*}
We notice that it holds
$$ |\xi|^s \chi_{\rm H}(|\xi|) \big|\partial_t^j e^{-|\xi|^{2(\sigma-\sigma_1)}t}\big|\lesssim |\xi|^{s+2j(\sigma-\sigma_1)}e^{-|\xi|^{2(\sigma-\sigma_1)}t}. $$
Following the proof of Theorem \ref{Linear_Decay}, we derive
\begin{equation}
\Big\|\partial_t^j |D|^s \Big(\mathfrak{F}^{-1}\Big(e^{-|\xi|^{2(\sigma-\sigma_1)}t}\chi_{\rm H}(|\xi|)\Big) \ast u_0(x)\Big)(t,\cdot)\Big\|_{L^2}\lesssim e^{-ct}\|u_0\|_{H^{s+2j(\sigma-\sigma_1)}}. \label{pro3.2.1}
\end{equation}
Therefore, combining (\ref{lemma2.6.5}), (\ref{proposition3.5.1}) and (\ref{pro3.2.1}) we may arrive at (\ref{proposition3.6.1}). In an analogous way to get (\ref{pro3.2.1}) we also obtain
\small
\begin{equation}
\left\|\partial_t^j |D|^s \left(\mathfrak{F}^{-1}\left(\dfrac{e^{-|\xi|^{2(\sigma-\sigma_1)}t}}{|\xi|^{2\sigma_1}}\chi_{\rm H}(|\xi|)\right) \ast u_1(x)\right)(t,\cdot)\right\|_{L^2}\lesssim e^{-ct}\|u_1\|_{H^{[s+2j(\sigma-\sigma_1)-2\sigma_1]^+}}. \label{pro3.2.2}
\end{equation}
\normalsize
Hence, combining (\ref{lemma2.6.6}), (\ref{proposition3.5.2}) and (\ref{pro3.2.2}) we may conclude (\ref{proposition3.6.2}). Summarizing, the proof of Proposition \ref{proposition3.6} is completed.
\end{proof}

\begin{proof}[\textbf{Proof of Theorem \ref{Linear_Asym}}]
In order to prove the asymptotic profile of solutions to \eqref{Linear_Main.Eq}, we can proceed as follows:
\begin{align*}
&\Big\|\partial_t^j |D|^s \Big(u(t,\cdot)-P_0\,\mathcal{G}_0(t,\cdot)- P_1\,\mathcal{G}_1(t,\cdot)\Big)\Big\|_{L^2} \\
&\quad \lesssim \Big\|\partial_t^j|D|^s \Big(\Big(K^1_0(t,x)- \mathcal{G}_0(t,x)\Big) \ast u_0(x)\Big)(t,\cdot)\Big\|_{L^2}+ \big\|\partial_t^j |D|^s \big(K^2_0(t,x) \ast u_0(x)\big)(t,\cdot)\big\|_{L^2} \\
&\qquad+ \big\||D|^s \big(K^2_1(t,x) \ast u_1(x)\big)(t,\cdot)\big\|_{L^2}+ \Big\|\partial_t^j|D|^s \Big(\Big(K^1_1(t,x)- \mathcal{G}_1(t,x)\Big) \ast u_1(x)\Big)(t,\cdot)\Big\|_{L^2} \\
&\qquad+ \Big\|\partial_t^j |D|^s \Big(\mathcal{G}_1(t,x) \ast u_1(x)- P_1\,\mathcal{G}_1(t,x)\Big)(t,\cdot)\Big\|_{L^2} \\
&\qquad +\Big\|\partial_t^j |D|^s \Big(\mathcal{G}_0(t,x) \ast u_0(x)- P_0\,\mathcal{G}_0(t,x)\Big)(t,\cdot)\Big\|_{L^2} \\
&\quad =: I_1+ I_2+ I_3+ I_4+ I_5+ I_6.
\end{align*}
Combining \eqref{lemma2.6.1} and \eqref{lemma2.6.5}, \eqref{lemma2.6.2} and \eqref{lemma2.6.5}, \eqref{lemma2.6.4} and \eqref{lemma2.6.6}, we derive
\begin{align*}
I_1 &\lesssim
\begin{cases}
    (1+t)^{-\frac{n}{4(\sigma-\sigma_1)}-\frac{s}{2(\sigma-\sigma_1)}-j-\frac{\sigma-2\sigma_1}{\sigma-\sigma_1}} \|u_0\|_{H^{s+2j\sigma_2}\cap L^1} &\text{ if } \sigma_1+\sigma_2>\sigma, \\
    (1+t)^{-\frac{n}{4(\sigma-\sigma_1)}-\frac{s}{2(\sigma-\sigma_1)}-j-\frac{\sigma_2-\sigma_1}{\sigma-\sigma_1}} \|u_0\|_{H^{s+2j\sigma_2}\cap L^1} &\text{ if } \sigma_1+\sigma_2\le \sigma,
\end{cases} \\ 
I_2 &\lesssim (1+t)^{-\frac{n}{4\sigma_1}-\frac{s}{2\sigma_1}-j- \frac{\sigma-2\sigma_1}{\sigma_1}} \|u_0\|_{H^{s+2j\sigma_2}\cap L^1}, \\
I_3 &\lesssim (1+t)^{-\frac{n}{4\sigma_1}- \frac{s}{2\sigma_1}-j+1} \|u_1\|_{H^{[s+2(j-1)\sigma_2]^+}\cap L^1},
\end{align*}
respectively. By \eqref{proposition3.6.2}, we arrive at
$$ I_4 \lesssim
\begin{cases}
    (1+t)^{-\frac{n}{4(\sigma-\sigma_1)}-\frac{s}{2(\sigma-\sigma_1)}-j-\frac{\sigma-3\sigma_1}{\sigma-\sigma_1}}\|u_1\|_{H^{[s+2(j-1)\sigma_2]^+}\cap L^1} &\text{ if } \sigma_1+\sigma_2>\sigma, \\
    (1+t)^{-\frac{n}{4(\sigma-\sigma_1)}-\frac{s}{2(\sigma-\sigma_1)}-j-\frac{\sigma_2-2\sigma_1}{\sigma-\sigma_1}}\|u_1\|_{H^{[s+2(j-1)\sigma_2]^+}\cap L^1} &\text{ if } \sigma_1+\sigma_2\le \sigma.
\end{cases}$$
To control $I_5$, we shall apply Lemma \ref{L^1.Lemma}. Indeed, it is clear that using the Parseval-Plancherel formula and the change of variables $\xi= t^{-\frac{1}{2(\sigma-\sigma_1)}}\eta$, one gives
\begin{align}
\Big\|\partial^j_t |D|^s \mathcal{G}_1(t,\cdot)\Big\|_{L^2} &= \Big\||\xi|^{s+2j(\sigma-\sigma_1)-2\sigma_1}e^{-|\xi|^{2(\sigma-\sigma_1)}t}\Big\|_{L^2} \nonumber \\
&= t^{-\frac{n}{4(\sigma-\sigma_1)}-\frac{s}{2(\sigma-\sigma_1)}-j+\frac{\sigma_1}{\sigma-\sigma_1}} \left(\int_0^\ity |\eta|^{2s-4\sigma_1+4j(\sigma-\sigma_1)}e^{-2|\eta|^{2(\sigma-\sigma_1)}}d\eta\right)^{\frac{1}{2}} \nonumber \\
&= C_1\,t^{-\frac{n}{4(\sigma-\sigma_1)}-\frac{s}{2(\sigma-\sigma_1)}-j+\frac{\sigma_1}{\sigma-\sigma_1}} \label{the1.1.1}
\end{align}
with the constant
$$C_1:= \left(\int_0^\ity |\eta|^{2s-4\sigma_1+4j(\sigma-\sigma_1)}e^{-2|\eta|^{2(\sigma-\sigma_1)}}d\eta\right)^{\frac{1}{2}}, $$
where we used the condition $n> 4\sigma_1$. Then, we employ Lemma \ref{L^1.Lemma} to derive
$$ I_5= o\Big(t^{-\frac{n}{4(\sigma-\sigma_1)}-\frac{s}{2(\sigma-\sigma_1)}-j+ \frac{\sigma_1}{\sigma-\sigma_1}}\Big)$$
as $t\to \ity$. Pay attention that it holds
$$ -\frac{n}{4\sigma_1}- \frac{s}{2\sigma_1}+1< -\frac{n}{4(\sigma-\sigma_1)}-\frac{s}{2(\sigma-\sigma_1)}+\frac{\sigma_1}{\sigma-\sigma_1} $$
as long as the condition $n> 4\sigma_1$ is provided. Finally, one also establishes the following estimate for $I_6$ in the same procedure:
$$ I_6= o\Big(t^{-\frac{n}{4(\sigma-\sigma_1)}-\frac{s}{2(\sigma-\sigma_1)}-j}\Big). $$
Therefore, from all the above estimates for $I_k$ with $k=1,2,\cdots,6$ we may conclude immediately (\ref{theorem1.3.1}). Then, from (\ref{theorem1.3.1}) and (\ref{the1.1.1}), we may arrive at the desired upper bound in the following way:
\begin{align*}
\big\|\partial^j_t |D|^s u(t,\cdot)\big\|_{L^2} &\le \Big\|\partial_t^j |D|^s \Big(u(t,\cdot)- P_0\,\mathcal{G}_0(t,\cdot)-P_1\,\mathcal{G}_1(t,\cdot)\Big)\Big\|_{L^2} \\
&\qquad + |P_0|\,\Big\|\partial^j_t |D|^s \mathcal{G}_0(t,\cdot)\Big\|_{L^2}+ |P_1|\,\Big\|\partial^j_t |D|^s \mathcal{G}_1(t,\cdot)\Big\|_{L^2} \\
&\le C_0|P_0|\,t^{-\frac{n}{4(\sigma-\sigma_1)}-\frac{s}{2(\sigma-\sigma_1)}-j}+ C_1|P_1|\,t^{-\frac{n}{4(\sigma-\sigma_1)}-\frac{s}{2(\sigma-\sigma_1)}-j+\frac{\sigma_1}{\sigma-\sigma_1}} \\
&\qquad + o\Big(t^{-\frac{n}{4(\sigma-\sigma_1)}-\frac{s}{2(\sigma-\sigma_1)}-j+\frac{\sigma_1}{\sigma-\sigma_1}}\Big) \\
&\le C_M\,t^{-\frac{n}{4(\sigma-\sigma_1)}-\frac{s}{2(\sigma-\sigma_1)}-j+\frac{\sigma_1}{\sigma-\sigma_1}}
\end{align*}
as $t\to \ity$, where $C_M$ is a suitable positive constant. Moreover, to indicate the lower bound, we may estimate in the following way:
\begin{align*}
\big\|\partial^j_t |D|^s u(t,\cdot)\big\|_{L^2}&\ge |P_0|\,\Big\|\partial^j_t |D|^s \mathcal{G}_0(t,\cdot)\Big\|_{L^2}+ |P_1|\,\Big\|\partial^j_t |D|^s \mathcal{G}_1(t,\cdot)\Big\|_{L^2} \\
&\qquad - \Big\|\partial_t^j |D|^s \Big(u(t,\cdot)- P_0\,\mathcal{G}_0(t,\cdot)- P_1\,\mathcal{G}_1(t,\cdot)\Big)\Big\|_{L^2} \\ 
&\ge C_0|P_0|\,t^{-\frac{n}{4(\sigma-\sigma_1)}-\frac{s}{2(\sigma-\sigma_1)}-j}+ C_1|P_1|\,t^{-\frac{n}{4(\sigma-\sigma_1)}-\frac{s}{2(\sigma-\sigma_1)}-j+ \frac{\sigma_1}{\sigma-\sigma_1}} \\
&\qquad - o\Big(t^{-\frac{n}{4(\sigma-\sigma_1)}-j-\frac{s}{2(\sigma-\sigma_1)}+ \frac{\sigma_1}{\sigma-\sigma_1}}\Big) \\
&\ge C_m\,t^{-\frac{n}{4(\sigma-\sigma_1)}-\frac{s}{2(\sigma-\sigma_1)}-j+ \frac{\sigma_1}{\sigma-\sigma_1}}
\end{align*}
as $t\to \ity$, where $C_m$ is a suitable positive constant. Summarizing, Theorem \ref{Linear_Asym} is proved.
\end{proof}

%==============================================================================
\section{Treatment of the semi-linear problems}\label{Semi-linear.Sec}
In this section, we will consider the inhomogeneous Cauchy problem for \eqref{Main.Eq}. Our main results read as follows.

\begin{theorem}[\textbf{Global existence with $j=0$}]\label{Global_Existence-1}
    Let us consider the initial data belong to the space
    $${\mathcal{D}_{m,0}^{\rm non}}:= (u_0,u_1)\in (H^{2\sigma_2}\cap L^m) \times (L^2\cap L^m)$$
    with $m \in [1,2)$. We assume for the dimension n the condition 
    \begin{equation}\label{2.27}
   n > 2m_0\sigma_1.
    \end{equation}
    Moreover, the exponent p satisfies the conditions
    \begin{equation}\label{2.28}
       p\in \left[\frac{2}{m}, \infty\right) \,\,\text{ if }\,\, n \leq 4\sigma_2 \quad\text{ or }\quad p \in \left[\frac{2}{m}, \frac{n}{n-4\sigma_2}\right) \,\,\text{ if }\,\, 4\sigma_2<n<\f{8\sigma_2}{2-m}
    \end{equation}
    and
    \begin{equation}\label{2.28-1}
        p > 1+\frac{2m\sigma}{n-2m\sigma_1}.
    \end{equation}
    Then, there exists a constant $\varepsilon_0 > 0$ such that for any small data $(u_0, u_1)$ satisfying the condition $\|(u_0,u_1)\|_{{\mathcal{D}_{m,0}^{\rm non}}} < \varepsilon_0$ we have a unique global (in time) small data solution
    $$u \in \mathcal{C}\big([0,\infty), H^{2\sigma_2}\big) \cap \mathcal{C}^1\big([0,\infty), L^2\big) $$
    to \eqref{Main.Eq}. The following decay estimates hold:
\begin{align}
\|u(t,\cdot)\|_{L^2} &\lesssim (1+t)^{-\frac{n}{2(\sigma-\sigma_1)}\left(\frac{1}{m}-\frac{1}{2}\right)+\frac{\sigma_1}{\sigma-\sigma_1}}\|(u_0,u_1)\|_{\mathcal{D}_{m,0}^{\rm non}}, \label{estimate3.1.1}\\
\||D|^{2\sigma_2}u(t,\cdot)\|_{L^2} &\lesssim (1+t)^{-\frac{n}{2(\sigma-\sigma_1)}\left(\frac{1}{m}-\frac{1}{2}\right)-\frac{\sigma_2-\sigma_1}{\sigma-\sigma_1}}\|(u_0,u_1)\|_{\mathcal{D}_{m,0}^{\rm non}},  \label{2.29} \\
\|u_t(t,\cdot)\|_{L^2} &\lesssim (1+t)^{-1}\|(u_0,u_1)\|_{\mathcal{D}_{m,0}^{\rm non}}.\label{2.30}
\end{align}
\end{theorem}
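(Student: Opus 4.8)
The plan is to run a contraction argument in a time-weighted space built from the decay rates of Theorem \ref{Linear_Decay}. For $T>0$ I would work in the space $X(T)$ of functions $u\in\mathcal C([0,T],H^{2\sigma_2})\cap\mathcal C^1([0,T],L^2)$ equipped with
$$\|u\|_{X(T)}:=\sup_{0\le t\le T}\Big((1+t)^{\alpha_0}\|u(t,\cdot)\|_{L^2}+(1+t)^{\alpha_1}\||D|^{2\sigma_2}u(t,\cdot)\|_{L^2}+(1+t)\,\|u_t(t,\cdot)\|_{L^2}\Big),$$
where $\alpha_0=\frac{n}{2(\sigma-\sigma_1)}(\frac1m-\frac12)-\frac{\sigma_1}{\sigma-\sigma_1}$ and $\alpha_1=\frac{n}{2(\sigma-\sigma_1)}(\frac1m-\frac12)+\frac{\sigma_2-\sigma_1}{\sigma-\sigma_1}$ are exactly the exponents in \eqref{estimate3.1.1}--\eqref{2.30}; note $\alpha_0>0$ thanks to \eqref{2.27}. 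By Duhamel's principle a solution of \eqref{Main.Eq} is a fixed point of
$$Nu(t,x):=K_0(t,\cdot)\ast u_0+K_1(t,\cdot)\ast u_1+\int_0^t K_1(t-\tau,\cdot)\ast|u(\tau,\cdot)|^p\,d\tau.$$
The linear part is controlled by $\|(u_0,u_1)\|_{\mathcal D_{m,0}^{\rm non}}$ directly from Theorem \ref{Linear_Decay} applied with $j=0$, $s\in\{0,2\sigma_2\}$ and with $j=1$, $s=0$, so the whole problem reduces to estimating the Duhamel integral and the corresponding difference.

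Next I would estimate the nonlinearity in the data space pointwise in $\tau$. Writing $\||u(\tau,\cdot)|^p\|_{L^r}=\|u(\tau,\cdot)\|_{L^{rp}}^p$ for $r\in\{m,2\}$ and using the fractional Gagliardo--Nirenberg inequality
$$\|u(\tau,\cdot)\|_{L^q}\lesssim\|u(\tau,\cdot)\|_{L^2}^{1-\theta_q}\,\||D|^{2\sigma_2}u(\tau,\cdot)\|_{L^2}^{\theta_q},\qquad\theta_q=\frac{n}{2\sigma_2}\Big(\frac12-\frac1q\Big)\in[0,1],$$
with $q=mp$ and $q=2p$, gives $\||u(\tau,\cdot)|^p\|_{L^m\cap L^2}\lesssim(1+\tau)^{-p\beta}\|u\|_{X(\tau)}^p$ where a short computation shows $\beta=\frac{1}{\sigma-\sigma_1}\big(\frac{n}{2m}(1-\frac1p)-\sigma_1\big)$. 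The admissibility $\theta_{mp},\theta_{2p}\in[0,1]$ and $mp\ge2$ are precisely the bounds on $p$ imposed in \eqref{2.28} (including $p\ge 2/m$). I would then split $\int_0^t=\int_0^{t/2}+\int_{t/2}^t$: on $[0,t/2]$ apply the $(L^m\cap L^2)-L^2$ estimates of Theorem \ref{Linear_Decay} to $K_1(t-\tau)$ (where $t-\tau\sim 1+t$), and on $[t/2,t]$ apply the $L^2-L^2$ estimates (where $1+\tau\sim 1+t$), using for the $|D|^{2\sigma_2}$ component that $|u(\tau,\cdot)|^p$ need only lie in $L^2$ because of the high-frequency smoothing in $\widehat{K_1}$. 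One checks that $p\beta>1\iff p>1+\frac{2m\sigma}{n-2m\sigma_1}$, which is exactly \eqref{2.28-1}; hence the $[0,t/2]$-integral converges and, together with the $[t/2,t]$-piece, reproduces the weights $\alpha_0$, $\alpha_1$, $1$. This yields $\|Nu\|_{X(T)}\lesssim\|(u_0,u_1)\|_{\mathcal D_{m,0}^{\rm non}}+\|u\|_{X(T)}^p$.

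For the contraction I would use the elementary bound $\big||u|^p-|v|^p\big|\lesssim\big(|u|^{p-1}+|v|^{p-1}\big)|u-v|$ together with H\"older's inequality, so that $\||u(\tau,\cdot)|^p-|v(\tau,\cdot)|^p\|_{L^m\cap L^2}$ is controlled, via the same Gagliardo--Nirenberg chain, by $\big(\|u\|_{X(\tau)}^{p-1}+\|v\|_{X(\tau)}^{p-1}\big)\|u-v\|_{X(\tau)}$ times the same power of $(1+\tau)$; repeating the Duhamel estimate gives $\|Nu-Nv\|_{X(T)}\lesssim\big(\|u\|_{X(T)}^{p-1}+\|v\|_{X(T)}^{p-1}\big)\|u-v\|_{X(T)}$. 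Choosing $\varepsilon_0$ small enough, $N$ maps a small closed ball of $X(T)$ into itself and contracts there, with all constants independent of $T$; passing to $T\to\infty$ produces the unique global solution with the stated regularity and decay estimates \eqref{estimate3.1.1}--\eqref{2.30}.

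I expect the main obstacle to be the bookkeeping in the second step: checking that the single scalar inequality \eqref{2.28-1} simultaneously guarantees integrability of the low-frequency ($[0,t/2]$) contribution and the correct decay after the split for all three components of $\|\cdot\|_{X(T)}$ at once. In particular one must verify that the ``parabolic-like'' rate governed by $\sigma-\sigma_1$ (rather than $\sigma_1$) controls every term — this is where the dominance of the $\sigma_1$-damping announced in the introduction genuinely enters — and that the chosen regularity $H^{2\sigma_2}$ is at once strong enough for Gagliardo--Nirenberg to close and exactly absorbed by the high-frequency smoothing of $\widehat{K_1}$, so that no extra regularity on the data is needed.
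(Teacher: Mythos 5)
Your proposal is correct and follows essentially the same route as the paper: the same weighted solution space $X(t)$ with exactly the weights $\alpha_0$, $\alpha_1$, $1$, the same Duhamel splitting at $t/2$ with $(L^m\cap L^2)-L^2$ estimates on $[0,t/2]$ and $L^2-L^2$ estimates on $[t/2,t]$, the same Gagliardo--Nirenberg control of $\||u|^p\|_{L^m\cap L^2}$ under \eqref{2.28}, the same verification that \eqref{2.28-1} is equivalent to integrability of the low-frequency contribution, and the same Lipschitz estimate via $\bigl||u|^p-|v|^p\bigr|\lesssim(|u|^{p-1}+|v|^{p-1})|u-v|$ and H\"older. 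No substantive differences from the paper's argument.
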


\begin{theorem}[\textbf{Global existence with $j=1$}]\label{Global_Existence-3}
Let $s > \max\{2\sigma_2 + n/2, 2(\sigma+\sigma_2-\sigma_1)\}$ and $m \in [1,2)$. We assume that the exponent p satisfies the condition
$$ p > \max\left\{\frac{2}{m},1 + s - 2 \sigma_2\right\} $$
and $n > 2m_0\sigma_1$. Then, there exists a constant $\varepsilon_0 > 0$ such that for any small data
$$\mathcal{D}_{m,1}^{\rm non}:= (u_0, u_1) \in \left(H^{s}\cap L^m\right) \times \left(H^{s-2\sigma_2}\cap L^m\right)$$
satisfying the assumption $\|(u_0, u_1)\|_{\mathcal{D}_{m,1}^{\rm non}} \leq \varepsilon_0$ we have a uniquely determined global (in time) small data energy solution
\begin{equation*}
    u \in \mathcal{C}\big([0, \infty), H^{s}\big) \cap \mathcal{C}^1\big([0,\infty), H^{s-2\sigma_2}\big)
\end{equation*}
to \eqref{Main.Eq}. The following estimates hold:
  \begin{align}
   \|u(t,\cdot)\|_{L^2} &\lesssim
 (1+t)^{-\frac{n}{2(\sigma-\sigma_1)}(\frac{1}{m} -\frac{1}{2}) +\frac{\sigma_1}{\sigma-\sigma_1}} \|(u_0,u_1)\|_{\mathcal{D}_{m,1}^{\rm non}}, \label{estimate3.3.1}\\
  \||D|^s u(t,\cdot)\|_{L^2} &\lesssim
 (1+t)^{-\frac{\sigma_2}{\sigma-\sigma_1}} \|(u_0,u_1)\|_{\mathcal{D}_{m,1}^{\rm non}}, \label{estimate3.3.2} \\
 \| u_t(t,\cdot)\|_{L^2} &\lesssim (1+t)^{-\frac{n}{2(\sigma-\sigma_1)}\left(\frac{1}{m}-\frac{1}{2}\right)+\frac{\sigma_1}{\sigma-\sigma_1}-1}\|(u_0,u_1)\|_{\mathcal{D}_{m,1}^{\rm non}}, \label{estimate3.3.3}\\
 \||D|^{s-2\sigma_2} u_t(t,\cdot)\|_{L^2} &\lesssim (1+t)^{-\frac{n}{2(\sigma-\sigma_1)}\left(\frac{1}{m}-\frac{1}{2}\right)+\frac{\sigma_1}{\sigma-\sigma_1}-1-\frac{s-2\sigma_2}{2(\sigma-\sigma_1)}}\|(u_0,u_1)\|_{\mathcal{D}_{m,1}^{\rm non}}. \label{estimate3.3.4}
\end{align}
\end{theorem}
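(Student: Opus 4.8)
The plan is to solve \eqref{Main.Eq} by a contraction argument for the operator
$$Nu(t,x):=u^{\mathrm{lin}}(t,x)+\int_0^t K_1(t-\tau,x)\ast|\partial_\tau u(\tau,x)|^p\,d\tau,$$
where $u^{\mathrm{lin}}$ is the solution of \eqref{Linear_Main.Eq} with data $(u_0,u_1)$ and $K_1$ is the kernel from \eqref{2.6}; only $K_1$ enters the Duhamel term by Duhamel's principle, and one uses $\widehat{K_1}(0,\xi)\equiv 0$ (so that no free term appears in $u^{\mathrm{non}}$ nor in $\partial_t u^{\mathrm{non}}=\int_0^t\partial_tK_1(t-\tau,\cdot)\ast|\partial_\tau u(\tau,\cdot)|^p\,d\tau$) and $\partial_t\widehat{K_1}(0,\xi)\equiv 1$ (which reproduces the equation at the level of $\partial_t^2u^{\mathrm{non}}$). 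I would work in the complete metric space $X(T)$ of functions $u\in\mathcal{C}([0,T],H^s)\cap\mathcal{C}^1([0,T],H^{s-2\sigma_2})$ with norm obtained by multiplying $\|u(t,\cdot)\|_{L^2}$, $\||D|^su(t,\cdot)\|_{L^2}$, $\|u_t(t,\cdot)\|_{L^2}$ and $\||D|^{s-2\sigma_2}u_t(t,\cdot)\|_{L^2}$ by the reciprocals of the time weights in \eqref{estimate3.3.1}--\eqref{estimate3.3.4} and taking $\sup_{t\in[0,T]}$. The target is the pair of inequalities $\|Nu\|_{X(T)}\lesssim\|(u_0,u_1)\|_{\mathcal{D}_{m,1}^{\mathrm{non}}}+\|u\|_{X(T)}^p$ and $\|Nu-Nv\|_{X(T)}\lesssim(\|u\|_{X(T)}^{p-1}+\|v\|_{X(T)}^{p-1})\|u-v\|_{X(T)}$, uniformly in $T$; Banach's fixed point theorem then produces a unique global solution in $X(\infty)$ for $\|(u_0,u_1)\|_{\mathcal{D}_{m,1}^{\mathrm{non}}}\le\varepsilon_0$ small, and membership in $X(\infty)$ is exactly \eqref{estimate3.3.1}--\eqref{estimate3.3.4}. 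For the linear part this is immediate from Theorem \ref{Linear_Decay}: since $s>\max\{2\sigma_2+n/2,\,2(\sigma+\sigma_2-\sigma_1)\}$ we have $s\ge2\sigma_1$ and $s\ge2\sigma_2$, so $\mathcal{D}_{m,1}^{\mathrm{non}}=(H^s\cap L^m)\times(H^{s-2\sigma_2}\cap L^m)$ is contained in every data space $\mathcal{D}_{m,\cdot,\cdot}^{\mathrm{lin}}$ needed there, and together with $n>2m_0\sigma_1$ and $s>2(\sigma+\sigma_2-\sigma_1)$ (which makes the $L^2$--$L^2$ rate for $\||D|^su^{\mathrm{lin}}\|_{L^2}$ at least as fast as $(1+t)^{-\sigma_2/(\sigma-\sigma_1)}$) one gets $\|u^{\mathrm{lin}}\|_{X(T)}\lesssim\|(u_0,u_1)\|_{\mathcal{D}_{m,1}^{\mathrm{non}}}$.

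\emph{Nonlinear source term.} The core is to bound $\|\,|\partial_\tau u(\tau,\cdot)|^p\|_{L^m}$, $\|\,|\partial_\tau u(\tau,\cdot)|^p\|_{L^2}$ and $\|\,|\partial_\tau u(\tau,\cdot)|^p\|_{\dot H^{s-2\sigma_2}}$ by a power of $\|u\|_{X(\tau)}$ times a decaying factor. For the $L^m$ and $L^2$ norms I would write them as $\|\partial_\tau u(\tau,\cdot)\|_{L^{mp}}^p$ and $\|\partial_\tau u(\tau,\cdot)\|_{L^{2p}}^p$ and estimate $\|\partial_\tau u(\tau,\cdot)\|_{L^q}$, $q\in\{mp,2p\}$, by the fractional Gagliardo--Nirenberg inequality interpolating $\|\partial_\tau u(\tau,\cdot)\|_{L^2}$ and $\||D|^{s-2\sigma_2}\partial_\tau u(\tau,\cdot)\|_{L^2}$; this uses $s-2\sigma_2>n/2$ (so every such $L^q$, including $L^\infty$, is admissible with interpolation parameter $<1$) and $mp\ge2$, i.e. $p\ge2/m$. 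For the homogeneous $\dot H^{s-2\sigma_2}$ norm I would invoke the fractional powers rule $\|\,|v|^p\|_{\dot H^{a}}\lesssim\|v\|_{L^\infty}^{p-1}\||D|^{a}v\|_{L^2}$ with $a=s-2\sigma_2$, valid precisely because $p>1+(s-2\sigma_2)$, together with the embedding $H^{s-2\sigma_2}\hookrightarrow L^\infty$ and Gagliardo--Nirenberg for the $\|\partial_\tau u\|_{L^\infty}^{p-1}$ factor. This yields $\|\,|\partial_\tau u(\tau,\cdot)|^p\|_{L^m\cap L^2\cap\dot H^{s-2\sigma_2}}\lesssim(1+\tau)^{-\kappa}\|u\|_{X(\tau)}^p$ with the $\kappa$'s read off from the weights of $X$; the two hypotheses $p>2/m$ and $p>1+(s-2\sigma_2)$ are exactly what also make $\kappa>1$ where that is needed below.

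\emph{Duhamel and contraction.} Inserting the previous estimate into $Nu$, I would split $\int_0^t=\int_0^{t/2}+\int_{t/2}^t$ and apply to $K_1(t-\tau,\cdot)\ast|\partial_\tau u(\tau,\cdot)|^p$ the $(L^m\cap L^2)$--$L^2$ estimates of Theorem \ref{Linear_Decay} (with the first data slot $0$ and the second slot $|\partial_\tau u(\tau,\cdot)|^p$): on $[0,t/2]$ one has $(1+(t-\tau))\sim(1+t)$, which carries the full decay, while $\int_0^{t/2}(1+\tau)^{-\kappa}\,d\tau$ stays bounded; on $[t/2,t]$ one has $(1+\tau)\sim(1+t)$, which carries the decay, while $\int_{t/2}^t(1+(t-\tau))^{-\cdot}\,d\tau$ stays bounded because $s>2(\sigma+\sigma_2-\sigma_1)$ forces the exponent $\tfrac{s-2\sigma_2}{2(\sigma-\sigma_1)}$, and hence all relevant ones, to exceed $1$. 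The high-frequency pieces only contribute $\int_0^t e^{-c(t-\tau)}\|\,|\partial_\tau u(\tau,\cdot)|^p\|_{H^{s-2\sigma_2}}\,d\tau$, which is harmless. The Lipschitz estimate follows in the same way from $\big|\,|a|^p-|b|^p\,\big|\lesssim(|a|^{p-1}+|b|^{p-1})|a-b|$, Hölder's inequality and the fractional Leibniz rule, bounding $\|\,|\partial_\tau u|^p-|\partial_\tau v|^p\|_{L^m\cap L^2\cap\dot H^{s-2\sigma_2}}$ by $(\|u\|_{X(\tau)}^{p-1}+\|v\|_{X(\tau)}^{p-1})\|u-v\|_{X(\tau)}$ with the same weights. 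Banach's fixed point theorem in $X(T)$, uniformly in $T$, closes the argument.

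I expect the main obstacle to be the control of the two top-order norms of the Duhamel term, $\||D|^su^{\mathrm{non}}\|_{L^2}$ and $\||D|^{s-2\sigma_2}u_t^{\mathrm{non}}\|_{L^2}$: the nonlinearity $|\partial_t u|^p$ carries only $H^{s-2\sigma_2}$ regularity — exactly as much as the fractional powers rule delivers and no more — so one must avoid any loss of derivatives when $|D|^s$ or $\partial_t$ falls on $K_1$, which is the very reason the lower bounds $p>1+(s-2\sigma_2)$ on the exponent and $s>2\sigma_2+n/2$ on the regularity are imposed. It is the ``$\sigma$-evolution-like'' damping $\mu_2(-\Delta)^{\sigma_2}u_t$ that dictates this required regularity level, while the ``parabolic-like'' damping $\mu_1(-\Delta)^{\sigma_1}u_t$ only governs the decay rates.
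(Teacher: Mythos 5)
Your proposal follows essentially the same route as the paper: the same weighted solution space $X(t)$ built from the four target decay rates, the same Duhamel splitting $\int_0^{t/2}+\int_{t/2}^t$ with $(L^m\cap L^2)$--$L^2$ and $L^2$--$L^2$ estimates from Theorem \ref{Linear_Decay}, the same Gagliardo--Nirenberg/fractional-powers/embedding toolkit for $\||u_t|^p\|_{L^m\cap L^2\cap\dot H^{s-2\sigma_2}}$, and the same identification of where each hypothesis enters (in particular $s>2(\sigma+\sigma_2-\sigma_1)$ making $\tfrac{s-2\sigma_2}{2(\sigma-\sigma_1)}>1$). The only slight imprecision is attributing $p>1+(s-2\sigma_2)$ to the fractional powers rule for $|v|^p$ (which needs only $s-2\sigma_2<p$); the full strength $s-2\sigma_2<p-1$ is actually needed for the Lipschitz estimate on $G(u)=u|u|^{p-2}$ and for the integrability of the time integrals, which you do also invoke.
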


\begin{theorem}[\textbf{Asymptotic profiles}]\label{Semi-linear_Asym}
    Consider that the assumptions of Theorem \ref{Global_Existence-1} hold, the global (in time) small data energy solutions to \eqref{Main.Eq} with $j = 0$ satisfy the following estimates for large $t \geq 1$:
    \begin{align}\label{eq:Semi-linear_Asym}
        \left\|\partial^j_t |D|^{2k\sigma_2} \Big(u(t, \cdot) - P_0 \mathcal{G}_0(t, \cdot) - (P_1+M_0) \mathcal{G}_1(t, \cdot)\Big)\right\|_{L^2} 
        = o\Big(t^{-\frac{n}{4(\sigma-\sigma_1)} - \frac{k\sigma_2}{\sigma-\sigma_1} - j + \frac{\sigma_1}{\sigma-\sigma_1}}\Big),
    \end{align}
    where $j, k=0,1$ and $(j,k) \ne (1,1)$.
    
    Moreover, when the assumptions of Theorem \ref{Global_Existence-3} hold, the global (in time) small data energy solutions to \eqref{Main.Eq} with $j=1$ satisfy the following estimates for large $t \geq 1$:
     \begin{align}
        \left\||D|^{ks} \Big(u(t, \cdot) - P_0 \mathcal{G}_0(t, \cdot) - (P_1 + M_1) \mathcal{G}_1(t, \cdot)\Big)\right\|_{L^2} 
        &= o\Big(t^{-\frac{n}{4(\sigma-\sigma_1)} - \frac{ks}{2(\sigma-\sigma_1)} + \frac{\sigma_1}{\sigma-\sigma_1}}\Big),\label{eq:Semi-linear_Asym2-1}\\
        \left\|\partial_t|D|^{k(s-2\sigma_2)} \Big(u(t, \cdot) - P_0 \mathcal{G}_0(t, \cdot) - (P_1 + M_1) \mathcal{G}_1(t, \cdot)\Big)\right\|_{L^2} 
        &= o\Big(t^{-\frac{n}{4(\sigma-\sigma_1)} - \frac{k(s-2\sigma_2)}{2(\sigma-\sigma_1)} - 1 + \frac{\sigma_1}{\sigma-\sigma_1}}\Big)\label{eq:Semi-linear_Asym2-2},
    \end{align}
    where $k=0,1$. In both estimates, we denote the quantities:
    \begin{equation*}
        M_0 \coloneqq \int^{\infty}_{0} \int_{\mathbb{R}^n} |u(\tau, y)|^p dy d\tau, \quad 
        M_1 \coloneqq \int^{\infty}_{0} \int_{\mathbb{R}^n} |u_t(\tau, y)|^p dy d\tau.
    \end{equation*}
\end{theorem}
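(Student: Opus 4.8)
The plan is to argue from Duhamel's principle. Writing $u^{\rm lin}$ for the solution of the linear problem \eqref{Linear_Main.Eq} with the same data, and letting $g(\tau,x):=|\partial_\tau^{\,0}u(\tau,x)|^p=|u(\tau,x)|^p$ in the setting of Theorem \ref{Global_Existence-1} (respectively $g(\tau,x):=|u_t(\tau,x)|^p$ in the setting of Theorem \ref{Global_Existence-3}), the global solution obeys
$$ u(t,x)= u^{\rm lin}(t,x)+ \int_0^t K_1(t-\tau,x)\ast g(\tau,x)\,d\tau . $$
Applying the operator under consideration ($\partial_t^j|D|^{2k\sigma_2}$, or $|D|^{ks}$, or $\partial_t|D|^{k(s-2\sigma_2)}$) to this identity, the linear part is handled directly by Theorem \ref{Linear_Asym}, which contributes precisely $P_0\mathcal{G}_0+P_1\mathcal{G}_1$ up to $o$ of the target rate (note that when one time derivative is applied to the Duhamel term no boundary term arises, since $\widehat{K_1}(0,\xi)=0$, so $\partial_t$ passes inside the integral as $\int_0^t\partial_tK_1(t-\tau)\ast g(\tau)\,d\tau$). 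It therefore remains to show that the nonlinear integral is asymptotic, in the relevant $L^2$ norm, to $M_0\,\partial_t^j|D|^{2k\sigma_2}\mathcal{G}_1(t,\cdot)$ (respectively to the $M_1$ analogues).

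For the nonlinear integral I would carry out a four-step replacement, splitting the $\tau$-integral throughout into $[0,t/2]$ and $[t/2,t]$. Step (i): replace $K_1(t-\tau,\cdot)$ by $\mathcal{G}_1(t-\tau,\cdot)$; since $g(\tau,\cdot)\in L^1$, Proposition \ref{proposition3.6} (together with the exponential high-frequency decay) shows $(K_1-\mathcal{G}_1)(t-\tau)\ast g(\tau)$ is of strictly lower order on $[0,t/2]$, while on $[t/2,t]$ the decay of $\|g(\tau,\cdot)\|_{L^1}$ absorbs the integral. Step (ii): replace the convolution $\mathcal{G}_1(t-\tau,\cdot)\ast g(\tau,\cdot)$ by $F(\tau)\,\mathcal{G}_1(t-\tau,\cdot)$ where $F(\tau):=\int_{\R^n}g(\tau,y)\,dy=\|\partial_\tau^{\,0}u(\tau,\cdot)\|_{L^p}^p$ (resp. $\|u_t(\tau,\cdot)\|_{L^p}^p$), the equality holding because $g\ge 0$; the resulting error is $o$ of the profile size by the $L^1$-approximation lemma (Lemma \ref{L^1.Lemma}). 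Step (iii): replace $\mathcal{G}_1(t-\tau,\cdot)$ by $\mathcal{G}_1(t,\cdot)$, using on $[0,t/2]$ the mean value theorem in time, $\|\partial_t^j|D|^\bullet(\mathcal{G}_1(t-\tau,\cdot)-\mathcal{G}_1(t,\cdot))\|_{L^2}\lesssim \tau t^{-1}\|\partial_t^j|D|^\bullet\mathcal{G}_1(t,\cdot)\|_{L^2}$, and on $[t/2,t]$ the smallness of $F(\tau)\sim t^{-\kappa}$. Step (iv): extend the upper limit from $t$ to $\infty$, so that $\int_0^tF(\tau)\,d\tau$ becomes $M_0$ (resp. $M_1$) with tail $\int_t^\infty F(\tau)\,d\tau=o(1)$; since $\|\partial_t^j|D|^\bullet\mathcal{G}_1(t,\cdot)\|_{L^2}$ is computed explicitly as in \eqref{the1.1.1}, this last error is again $o$ of the target. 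Collecting (i)--(iv) and the linear contribution produces the profile $P_0\mathcal{G}_0+(P_1+M_0)\mathcal{G}_1$ (resp. $P_0\mathcal{G}_0+(P_1+M_1)\mathcal{G}_1$).

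The main obstacle, on which the whole scheme rests, is the time integrability of $F(\tau)=\|\partial_\tau^{\,0}u(\tau,\cdot)\|_{L^p}^p$ (resp. $\|u_t(\tau,\cdot)\|_{L^p}^p$), i.e.\ the finiteness of $M_0$ (resp. $M_1$) and a quantitative decay of its tail. I would obtain this by Gagliardo--Nirenberg interpolation of the $L^p$ norm between the $L^2$ and $\dot H^{\bullet}$ norms controlled in Theorem \ref{Global_Existence-1} (resp. \ref{Global_Existence-3}), yielding $F(\tau)\lesssim (1+\tau)^{-\kappa}$; the admissibility range \eqref{2.28} makes this interpolation legitimate, and the subcriticality condition \eqref{2.28-1} (resp. $p>\max\{2/m,\,1+s-2\sigma_2\}$) is exactly what forces $\kappa>1$, hence $M_0<\infty$ (resp. $M_1<\infty$) and $\int_t^\infty F(\tau)\,d\tau\lesssim (1+t)^{1-\kappa}=o(1)$. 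A secondary but necessary task is the exponent bookkeeping: one must check that every error term in (i)--(iv) decays faster than $t^{-\frac{n}{4(\sigma-\sigma_1)}-\frac{k\sigma_2}{\sigma-\sigma_1}-j+\frac{\sigma_1}{\sigma-\sigma_1}}$ (and the $|D|^{ks}$, $\partial_t|D|^{k(s-2\sigma_2)}$ analogues), where the hypotheses $n>2m_0\sigma_1$ and $n>4\sigma_1$ (the latter inherited from Theorem \ref{Linear_Asym}) guarantee convergence of the low-frequency integrals and keep the $\mathcal{G}_1$-profile genuinely dominant over $\mathcal{G}_0$ and over the $K_2$-type remainders $I_2,I_3$ of the linear part; the excluded case $(j,k)=(1,1)$ is precisely the one where the derivative loss is too severe for $\mathcal{G}_1$ to dominate.
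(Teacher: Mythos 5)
Your overall scheme coincides with the paper's: Duhamel's formula, reduction of the linear part to Theorem \ref{Linear_Asym}, and then the same four replacements (kernel $K_1\mapsto\mathcal{G}_1$ via Proposition \ref{proposition3.6}, convolution $\mapsto$ mass times profile, $\mathcal{G}_1(t-\tau)\mapsto\mathcal{G}_1(t)$ by the mean value theorem in time, and extension of $\int_0^{t/2}$ to $M_0$ resp. $M_1$), with the finiteness of $M_0,M_1$ secured exactly as you say by Gagliardo--Nirenberg and the subcriticality conditions \eqref{2.28-1} and $p>1+s-2\sigma_2$. The order of your steps (ii) and (iii) is swapped relative to the paper's decomposition into $I_1,\dots,I_5$, but that is immaterial.

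The one point where your argument as written is not yet complete is step (ii). Lemma \ref{L^1.Lemma} is stated for a \emph{single} fixed $h\in L^1$ and yields an $o(t^{-\alpha})$ whose implicit constant (more precisely, whose rate of vanishing) depends on $h$. Applying it with $h=g(\tau,\cdot)$ for each $\tau\in[0,t/2]$ produces a $\tau$-dependent $o(\cdot)$ which you then need to integrate in $\tau$; without uniformity over the family $\{g(\tau,\cdot)\}_{\tau\ge 0}$ the conclusion $\int_0^{t/2}o\big((t-\tau)^{-\alpha}\big)\,d\tau=o(t^{-\alpha+1})$ does not follow. The paper avoids this by performing the replacement at the fixed time $t$ and making it quantitative: it splits the spatial integral at $|y|=t^{\frac{1}{4(\sigma-\sigma_1)}}$, uses the mean value theorem in $x$ on the inner region (gaining a factor $t^{\frac{1}{4(\sigma-\sigma_1)}}\cdot t^{-\frac{1}{2(\sigma-\sigma_1)}}$ from one extra derivative on $\mathcal{G}_1$), and on the outer region uses that $\int_0^{\infty}\int_{|y|\ge t^{1/(4(\sigma-\sigma_1))}}|u(\tau,y)|^p\,dy\,d\tau\to 0$ as $t\to\infty$ by dominated convergence, which is exactly the uniform-in-$\tau$ substitute for the lemma. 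Your step (ii) should be replaced by (or supplemented with) this splitting; everything else in your proposal, including the exponent bookkeeping and the role of $n>4\sigma_1$, matches the paper's proof.
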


%=====================================================================
\subsection{Proof of global existence results}\label{GlobalExistence.Sec}
In this section, we will use the decay estimates for solutions to \eqref{Linear_Main.Eq}, which are obtained in Theorems \ref{Linear_Decay} to prove the corresponding Theorems \ref{Global_Existence-1}. Our main tools are Duhamel’s principle and Gagliardo–Nirenberg inequality. By using fundamental solutions, we write the solution of \eqref{Linear_Main.Eq} in the form 
$$
u(t,x) = K_0(t,x) * u_0(x) + K_1(t,x) * u_1(x)
$$
then the solution to $(\ref{Main.Eq})$ becomes
\begin{align}
    u(t,x) &= K_0(t,x) \ast_x u_0(x) + K_1(t,x) \ast_x u_1(x) + \int_0^t K_1(t-\tau,x) \ast_x |\partial_t^ju(\tau,x)|^p d\tau \notag\\
    &:= u^{\rm lin}(t,x) + u^{\rm non}(t,x).\label{2.31}
\end{align}
for $j = 0,1$. We introduce the space of data  $\mathcal{D}_m^{\rm non} := ( H^{s} \cap L^m) \times (H^{[s-2\sigma_2]^{+}} \cap L^m)$ with $m \in [1,2)$ and the function spaces $X(t) := \mathcal{C}([0,t], H^{s}) \cap \mathcal{C}^1([0,t], H^{[s-2\sigma_2]^{+}}) $ for all $t > 0$ with the norm
\begin{align*}
\|u(\tau, \cdot)\|_{X(t)}&:=\sup _{0 \leq \tau \leq t}\Big(f_1(\tau)^{-1}\|u(\tau, \cdot)\|_{L^2}+f_2(\tau)^{-1}\left\||D|^{s} u(\tau, \cdot)\right\|_{L^2}\\
&\hspace{2cm}+ g_1(\tau)^{-1}\left\|u_t(\tau, \cdot)\right\|_{L^2}+g_2(\tau)^{-1}\||D|^{s-2\sigma_2}u_t\|_{H^{[s-2\sigma_2]^{+}}}\Big),
\end{align*}
We define for any $u \in X(t)$ the operator 
\begin{align*}
    \mathcal{N} \text{ : } &u \in X(t) \longrightarrow \mathcal{N}[u] \in X(t)  \\
    &\mathcal{N}[u](t,x) = u^{\rm lin}(t,x)+u^{\rm non}(t,x)
\end{align*}
with $j = 0,1$. The mapping into $X(t)$ follows from estimate 
\begin{equation}\label{2.32}
    \|\mathcal{N}[u](t,\cdot)\|_{X(t)} \lesssim \|(u_0,u_1)\|_{\mathcal{D}_m^{\rm non}} + \|u(t,\cdot)\|_{X(t)}^p
\end{equation}
Moreover, we show the Lipschitz property 
\begin{equation}\label{2.33}
    \|\mathcal{N}[u](t,\cdot)-\mathcal{N}[v](t,\cdot)\|_{X(t)} \lesssim \|u(t,\cdot)-v(t,\cdot)\|_{X(t)}\left(\|u(t,\cdot)\|_{X(t)}^{p-1} + \|v(t,\cdot)\|_{X(t)}^{p-1}\right)
\end{equation}
If we can prove \eqref{2.32} and \eqref{2.33}, then standard arguments imply, on the one hand, the local (in time) solutions for arbitrary data and, on the other hand, the global (in time) solutions for small data as well. The decay estimates for the solution and its energy follow immediately with the definition of the norm in $X(t)$.

\begin{remark}\label{remark3.1}
    Basically, if $n = 2m_0\sigma_1$ then due to Theorem \ref{Linear_Decay}, the coefficient of $\|u(\tau,\cdot)\|_{L^2}$ is $(\log(e+t))^{-1}$. Because this term brings no additional difficulties, we will ignore it.
\end{remark}

%=====================================================================
\begin{proof}[\textbf{Proof of Theorem \ref{Global_Existence-1}}] Taking $s = 2\sigma_2$ we introduce the data space $\mathcal{D}_m^{\rm non}:= \mathcal{D}_{m,0}^{\rm non}$ and solution space
\begin{equation*}
    X(t) := \mathcal{C}([0,t], H^{2\sigma_2}) \cap \mathcal{C}^1([0,t], L^2),
\end{equation*}
where we choose
\begin{align*}
    f_1(\tau) &= (1+\tau)^{-\frac{n}{2(\sigma-\sigma_1)}(\frac{1}{m}-\frac{1}{2})+\frac{\sigma_1}{\sigma-\sigma_1}},\\
    f_2(\tau) &= (1+\tau)^{-\frac{n}{2(\sigma-\sigma_1)}(\frac{1}{m}-\frac{1}{2})-\frac{\sigma_2-\sigma_1}{\sigma-\sigma_1}},\\
    g_1(\tau) &= 1+\tau\quad \text{ and }\quad g_2(\tau) = 0.
\end{align*}

\noindent\textit{Let us first prove \eqref{2.32}.} From Theorem \ref{Linear_Decay}, combined with the following estimates:
\begin{equation*}
    (1+\tau)^{-\frac{n}{2(\sigma-\sigma_1)}(\frac{1}{m}-\frac{1}{2})-1+\frac{\sigma_1}{\sigma-\sigma_1}} \lesssim (1+\tau)^{-1},
\end{equation*}
we obtain
\begin{equation*}
     \|u^{\rm lin}\|_{X(t)} \lesssim \|(u_0, u_1)\|_{\mathcal{D}_m^{\rm non}} \text{ for } s \geq 0.
\end{equation*}
For this reason, to prove the estimate (\ref{2.32}), we only need to prove the following estimate:
\begin{equation}\label{2.99}
    \|u^{\rm non}\|_{X(t)} \lesssim \|u\|_{X(t)}^p.
\end{equation}
To get started, we use two different strategies for $\tau \in [0, \frac{t}{2}]$ and $\tau \in [\frac{t}{2}, t]$ to control the integral in $u^{\rm non}$. More precisely, we use the $(L^m \cap L^2)-L^2$ estimates if $\tau \in [0,\frac{t}{2}]$ and we use $L^2-L^2$ estimates if $\tau \in [\frac{t}{2}, t]$. Taking into consideration the estimates from Theorem \ref{Linear_Decay}, we get
\begin{align}
    \|u^{\rm non}(t,\cdot)\|_{L^2} &\lesssim \int_0^t (1+t-\tau)^{-\frac{n}{2(\sigma-\sigma_1)}(\frac{1}{m}-\frac{1}{2})+\frac{\sigma_1}{\sigma-\sigma_1}}\||u(\tau,x)|^p \|_{L^m \cap L^2}d\tau,  \label{2.34} \\
     \||D|^{2\sigma_2}u^{\rm non}(t,\cdot)\|_{L^2} &\lesssim \int_0^{\frac{t}{2}} (1+t-\tau)^{-\frac{n}{2(\sigma-\sigma_1)}(\frac{1}{m} - \frac{1}{2})-\frac{\sigma_2-\sigma_1}{\sigma-\sigma_1}}\||u(\tau,x)|^p\|_{L^m \cap L^2} d\tau\\
     &\quad+ \int_{\frac{t}{2}}^t (1+t-\tau)^{-\frac{\sigma_2-\sigma_1}{\sigma-\sigma_1}}\||u(\tau,x)|^p\|_{L^2}d\tau, \label{2.35} \\
    \|u_t^{\rm non}(t,\cdot)\|_{L^2} &\lesssim \int_0^{t} (1+t-\tau)^{-\frac{n}{2(\sigma-\sigma_1)}(\frac{1}{m}-\frac{1}{2})+\frac{\sigma_1}{\sigma-\sigma_1}-1}\||u(\tau,x)|^p \|_{L^m \cap L^2}d\tau. \label{2.36}
\end{align}
It is required to estimate $|u(\tau,\cdot)|^p$ in $L^m \cap L^2$ and in $L^2$. Taking account of
\begin{align*}
    \||u(\tau,\cdot)|^p\|_{L^m \cap L^2} &\lesssim \|u(\tau,\cdot)\|_{L^{mp}}^p + \|u(\tau,\cdot)\|_{L^{2p}}^p, \\
    \||u(\tau,\cdot)|^p\|_{L^2} &\lesssim \|u(\tau, \cdot)\|_{L^{2p}}^p,
\end{align*}
reduces the requirement to estimate $\|u(\tau,\cdot)\|_{L^{2p}}$ and $\|u(\tau,\cdot)\|_{L^{mp}}$. Due to $\eqref{2.28}$ and applying the Gagliardo–Nirenberg inequality, we have
\begin{equation}\label{2.37}
    \begin{aligned}
       \|u(\tau,\cdot)\|_{L^{mp}}^p &\lesssim \|u(\tau,\cdot)\|_{L^2}^{p(1-\theta(mp))} \||D|^{2\sigma_2}u(\tau,\cdot)\|_{L^2}^{p\theta(mp)}\\
       &\lesssim \left((1+\tau)^{-\frac{\sigma_2\theta(mp)}{\sigma-\sigma_1}}\|u(\tau,\cdot)\|_{L^2}+ (1+\tau)^{\frac{\sigma_2(1-\theta(mp))}{\sigma-\sigma_1}}\||D|^{2\sigma_2}u(\tau,\cdot)\|_{L^2}\right)^p\\
       &\lesssim  (1+\tau)^{-\frac{p}{2(\sigma-\sigma_1)}(n(\frac{1}{m} -\frac{1}{2})-2\sigma_1+2\sigma_2\theta(mp))}\|u\|_{X(t)}^p\\
       &= (1+\tau)^{-\frac{n}{2m(\sigma-\sigma_1)}(p-1)+\frac{p\sigma_1}{\sigma-\sigma_1}}\|u\|_{X(t)}^p .
    \end{aligned}
\end{equation}
provided that (\ref{2.28}) is satisfied. By a similar proof, we have 
\begin{equation}\label{2.38}
    \|u(\tau,\cdot)\|_{L^{2p}}^p \lesssim \|u\|_{X(t)}^p (1+\tau)^{-\frac{pn}{2(\sigma-\sigma_1)}(\frac{1}{m}-\frac{1}{2p})+\frac{p\sigma_1}{\sigma-\sigma_1}}.
\end{equation}
In this way, we obtain 
\begin{equation}\label{2.39}
    \||u(\tau,x)|^p\|_{L^m \cap L^2} \lesssim  \|u\|_{X(t)}^p (1+\tau)^{-\frac{n}{2m(\sigma-\sigma_1)}(p-1)+\frac{p\sigma_1}{\sigma-\sigma_1}},
\end{equation}
provided that (\ref{2.28}) is satisfied, whereas
\begin{equation}\label{2.40}
    \||u(\tau,x)|^p\|_{L^2} \lesssim \|u\|_{X(t)}^p (1+\tau)^{-\frac{pn}{2(\sigma-\sigma_1)}(\frac{1}{m}-\frac{1}{2p})+\frac{p\sigma_1}{\sigma-\sigma_1}}.
\end{equation}
Thus, we have
\begin{align*}
    \|u^{\rm non}(t,\cdot)\|_{L^2} &\lesssim (1+t)^{-\frac{n}{2(\sigma-\sigma_1)}(\frac{1}{m}-\frac{1}{2})+\frac{\sigma_1}{\sigma-\sigma_1}}\|u\|_{X(t)}^p \int_0^{\frac{t}{2}} (1+\tau)^{-\frac{n}{2m(\sigma-\sigma_1)}(p-1)+\frac{p\sigma_1}{\sigma-\sigma_1}}d\tau\\
    &\quad+ (1+t)^{\frac{p\sigma_1}{\sigma-\sigma_1}-\frac{n}{2m(\sigma-\sigma_1)}(p-1)}\|u\|_{X(t)}^p 
    \int_{\frac{t}{2}}^t (1+t-\tau)^{-\frac{n}{2(\sigma-\sigma_1)}(\frac{1}{m}-\frac{1}{2})+\frac{\sigma_1}{\sigma-\sigma_1}}d\tau, \\
    \|u_t^{\rm non}(t,\cdot)\|_{L^2} &\lesssim (1+t)^{-\frac{n}{2(\sigma-\sigma_1)}(\frac{1}{m}-\frac{1}{2})+\frac{\sigma_1}{\sigma-\sigma_1} -1}\|u\|_{X(t)}^p \int_0^{\frac{t}{2}} (1+\tau)^{-\frac{n}{2m(\sigma-\sigma_1)}(p-1)+\frac{p\sigma_1}{\sigma-\sigma_1}}d\tau\\
    &\quad+ (1+t)^{\frac{p\sigma_1}{\sigma-\sigma_1}-\frac{n}{2m(\sigma-\sigma_1)}(p-1)}\|u\|_{X(t)}^p \int_{\frac{t}{2}}^t (1+t-\tau)^{-\frac{n}{2(\sigma-\sigma_1)}(\frac{1}{m}-\frac{1}{2})+\frac{\sigma_1}{\sigma-\sigma_1} -1}d\tau, \\
    \||D|^{2\sigma_2}u^{\rm non}(t,\cdot)\|_{L^2} &\lesssim (1+t)^{-\frac{n}{2(\sigma-\sigma_1)}(\frac{1}{m} - \frac{1}{2})-\frac{\sigma_2-\sigma_1}{\sigma-\sigma_1}}\|u\|_{X(t)}^p \int_0^{\frac{t}{2}} (1+\tau)^{-\frac{n}{2m(\sigma-\sigma_1)}(p-1)+\frac{p\sigma_1}{\sigma-\sigma_1}}d\tau\\
    &\quad+ (1+t)^{\frac{p\sigma_1}{\sigma-\sigma_1}-\frac{pn}{2(\sigma-\sigma_1)}(\frac{1}{m}-\frac{1}{2p})}\|u\|_{X(t)}^p\int_{\frac{t}{2}}^t (1+t-\tau)^{-\frac{\sigma_2-\sigma_1}{\sigma-\sigma_1}}d\tau,
    \end{align*}
due to $(\ref{2.28})$, we have
$$ (1+\tau)^{-\frac{n}{2m(\sigma-\sigma_1)}(p-1)+\frac{p\sigma_1}{\sigma-\sigma_1}} < (1+\tau)^{-1}. $$
Moreover, $0 < \frac{\sigma_2 -\sigma_1}{\sigma-\sigma_1} < 1$ so that we can also estimate 
\begin{equation*}
\begin{aligned}
    &(1+t)^{-\frac{pn}{2(\sigma-\sigma_1)}(\frac{1}{m}-\frac{1}{2p})+\frac{p\sigma_1}{\sigma-\sigma_1}}\int_{\frac{t}{2}}^t (1+t-\tau)^{-\frac{\sigma_2-\sigma_1}{\sigma-\sigma_1}}d\tau \\
    &\quad\lesssim (1+t)^{1-\frac{pn}{2(\sigma-\sigma_1)}(\frac{1}{m}-\frac{1}{2p})-\frac{\sigma_2-\sigma_1}{\sigma-\sigma_1}+\frac{p\sigma_1}{\sigma-\sigma_1}} \lesssim (1+t)^{-\frac{n}{2(\sigma-\sigma_1)}(\frac{1}{m} - \frac{1}{2})-\frac{\sigma_2-\sigma_1}{\sigma-\sigma_1}},
  \end{aligned}
\end{equation*}
using again $(\ref{2.28})$. On the other hand, we also have the following relations:
\begin{align*}
    (1+t)^{-\frac{n}{2m(\sigma-\sigma_1)}(p-1)+\frac{p\sigma_1}{\sigma-\sigma_1}} \int_{\frac{t}{2}}^t (1+t-\tau)^{-\frac{n}{2(\sigma-\sigma_1)}(\frac{1}{m}-\frac{1}{2})+\frac{\sigma_1}{\sigma-\sigma_1} -1}d\tau \lesssim (1+t)^{-1}.
\end{align*}
Thus, (\ref{2.99}) has been proved.\medskip

\noindent\textit{Now, we are going to prove $(\ref{2.33})$.} We remark that
\begin{equation*}
    \|\partial_t^j|D|^{s}(\mathcal{N}[u] - \mathcal{N}[v])\|_{X(t)} = \left\|\int_0^t \partial_t^j|D|^{s}G_1(t-\tau,x) *\left(|u|^p-|v|^p\right)d\tau\right\|_{X(t)}.
\end{equation*}
with $(j,s) \in \{(1,0),(0,0), (0, 2\sigma_2)\}$. Due to $(\ref{2.28})$, applying Gagliardo–Nirenberg inequality and Holder's inequality, we have
\begin{align*}
    \left\||u|^p - |v|^p\right\|_{L^m} &\lesssim \|u-v\|_{L^{mp}}\left(\|u(\tau,\cdot)\|_{L^{mp}}^{p-1}+\|v(\tau,\cdot)\|_{L^{mp}}^{p-1}\right)\\
    &\lesssim (1+\tau)^{-\frac{n}{2(\sigma-\sigma_1)}(p-1)+\frac{p\sigma_1}{\sigma-\sigma_1}} \|u-v\|_{X(t)}\left(\|u\|_{X(t)}^{p-1}+\|v\|_{X(t)}^{p-1}\right), \\
    \left\||u|^p - |v|^p\right\|_{L^2} &\lesssim \|u-v\|_{L^{2p}}\left(\|u(\tau,\cdot)\|_{L^{2p}}^{p-1}+\|v(\tau,\cdot)\|_{L^{2p}}^{p-1}\right)\\
    &\lesssim (1+\tau)^{-\frac{pn}{2(\sigma-\sigma_1)}(\frac{1}{m}-\frac{1}{2p})+\frac{p\sigma_1}{\sigma-\sigma_1}} \|u-v\|_{X(t)}\left(\|u\|_{X(t)}^{p-1}+\|v\|_{X(t)}^{p-1}\right).
\end{align*}
In this way, we may conclude the proof of $(\ref{2.33})$.
\end{proof}

%===================================================================
%...................................................................
\begin{proof}[\textbf{Proof of Theorem \ref{Global_Existence-3}}]
    We introduce the data space $\mathcal{D}_m^{\rm non}:= \mathcal{D}_{m,1}^{\rm non}$ and solution space
\begin{equation*}
    X(t) := \mathcal{C}([0,t], H^{s}) \cap \mathcal{C}^1([0,t], H^{s-2\sigma_2}),
\end{equation*}
where we choose
\begin{align*}
    f_1(\tau)&:= (1+\tau)^{-\frac{n}{2(\sigma-\sigma_1)}(\frac{1}{m}-\frac{1}{2})+\frac{\sigma_1}{\sigma-\sigma_1}},\\
    f_2(\tau) &:= (1+\tau)^{-\frac{\sigma_2}{\sigma-\sigma_1}},\\
    g_1(\tau)&:= (1+\tau)^{-\frac{n}{2(\sigma-\sigma_1)}(\frac{1}{m}-\frac{1}{2})-1+\frac{\sigma_1}{\sigma-\sigma_1}},\\
    g_2(\tau) &:= (1+\tau)^{-\frac{n}{2(\sigma-\sigma_1)}(\frac{1}{m}-\frac{1}{2})-1+\frac{\sigma_1}{\sigma-\sigma_1}-\frac{s-2\sigma_2}{2(\sigma-\sigma_1)}}.
\end{align*}

\textit{First, we prove the estimate \eqref{2.32}}. From Theorem \ref{Linear_Decay}, combined with the following estimates:
\begin{equation*}
    (1+\tau)^{-\frac{n}{2(\sigma-\sigma_1)}(\frac{1}{m}-\frac{1}{2})-\frac{s}{2(\sigma-\sigma_1)}+\frac{\sigma_1}{\sigma-\sigma_1}} \lesssim (1+\tau)^{-\frac{\sigma_2}{\sigma-\sigma_1}},
\end{equation*}
we can obtain
\begin{equation*}
     \|u^{\rm lin}\|_{X(t)} \lesssim \|(u_0, u_1)\|_{\mathcal{D}_m^{\rm non}} \text{ for } s \geq 0.
\end{equation*}
For this reason, to prove the estimate \eqref{2.32}, we only need to show the following estimate:
\begin{equation}\label{2.100}
    \|u^{\rm non}\|_{X(t)} \lesssim \|u\|_{X(t)}^p.
\end{equation}
Using the estimate $(L^m \cap L^2)-L^2$ from theorem \ref{Linear_Decay} we have
\begin{align*}
    \|u^{\rm non}(t,\cdot)\|_{L^2} &\lesssim \int_0^{t} (1+t-\tau)^{-\frac{n}{2(\sigma-\sigma_1)}(\frac{1}{m} - \frac{1}{2})+ \frac{\sigma_1}{\sigma-\sigma_1}}\| \left|u_t(\tau,x)\right|^p\|_{L^m \cap L^2} d\tau.
  \end{align*}
We get the following relations:
\begin{equation*}
    \||u_t(\tau,\cdot)|^p\|_{L^2 \cap L^m} \lesssim \|u_t(\tau,\cdot)\|_{L^{2p}}^p + \|u_t(\tau,\cdot)\|_{L^{mp}}^p \quad\text{ and }\quad \||u_t(\tau, \cdot)|^p\|_{L^2} = \|u_t(\tau,\cdot)\|_{L^{2p}}^p.
\end{equation*}
Applying the fractional Gagliardo-Nirenberg inequality implies
\begin{align*}
    \||u_t(\tau,\cdot)|^p\|_{L^2 \cap L^m} &\lesssim (1+\tau)^{-\frac{n}{2m(\sigma-\sigma_1)}(p-1)-\frac{p(\sigma-2\sigma_1)}{\sigma-\sigma_1}} \|u\|_{X(t)}^p, \\
    \||u_t(\tau,\cdot)|^p\|_{L^2} &\lesssim (1+\tau)^{-\frac{pn}{2(\sigma-\sigma_1)}(\frac{1}{m}-\frac{1}{2p})-\frac{p(\sigma-2\sigma_1)}{\sigma-\sigma_1}} \|u\|_{X(t)}^p.
\end{align*}
provided that $p \in \left[\frac{2}{m}, \infty \right)$ since $s > 2\sigma_2 + \frac{n}{2}$. From this, we have the following conclusion:
\begin{align*}
    \|u^{\rm non}(t,\cdot)\|_{L^2}
    \lesssim& \|u\|_{X(t)}^p \int_0^t (1+t-\tau)^{-\frac{n}{2(\sigma-\sigma_1)}\left(\frac{1}{m} - \frac{1}{2}\right)+ \frac{\sigma_1}{\sigma-\sigma_1}}(1+\tau)^{-\frac{n}{2m(\sigma-\sigma_1)}(p-1)-\frac{p(\sigma-2\sigma_1)}{\sigma-\sigma_1}} d\tau \\
   \lesssim& (1+t)^{-\frac{n}{2(\sigma-\sigma_1)}\left(\frac{1}{m} - \frac{1}{2}\right)+ \frac{\sigma_1}{\sigma-\sigma_1}}\|u\|_{X(t)}^p \int_0^{\frac{t}{2}} (1+\tau)^{-\frac{n}{2m(\sigma-\sigma_1)}(p-1)-\frac{p(\sigma-2\sigma_1)}{\sigma-\sigma_1}} d\tau \\
   &+ (1+t)^{-\frac{n}{2m(\sigma-\sigma_1)}(p-1)-\frac{p(\sigma-2\sigma_1)}{\sigma-\sigma_1}}\|u\|_{X(t)}^p \int_{\frac{t}{2}}^t (1+t-\tau)^{-\frac{n}{2(\sigma-\sigma_1)}\left(\frac{1}{m} - \frac{1}{2}\right)+ \frac{\sigma_1}{\sigma-\sigma_1}} d\tau.
\end{align*}
Because $p > 1+s-2\sigma_2$ and $n > 2m_0\sigma_1$ and $s > \frac{n}{2}+2\sigma_2$, we have
\begin{align}\label{equation3.3.1}
    p > 1+s-2\sigma_2 > 1+\frac{m(s-2\sigma_2)}{n+2m(\sigma-2\sigma_1)} > 1+\frac{2m\sigma_1}{n+2m(\sigma-2\sigma_1)}.
\end{align}
This leads to 
\begin{align}\label{equation3.3.2}
    -\frac{n}{2m(\sigma-\sigma_1)}(p-1)-\frac{p(\sigma-2\sigma_1)}{\sigma-\sigma_1} < -1
\end{align}
and
\begin{align*}
    -\frac{n}{2m(\sigma-\sigma_1)}(p-1)-\frac{p(\sigma-2\sigma_1)}{\sigma-\sigma_1} < -\frac{n}{2(\sigma-\sigma_1)}\left(\frac{1}{m} - \frac{1}{2}\right) + \frac{\sigma_1}{\sigma-\sigma_1}.
\end{align*}
From here, we have a conclusion
\begin{align}
    \|u^{\rm non}(t,\cdot)\|_{L^2} \lesssim (1+t)^{-\frac{n}{2(\sigma-\sigma_1)}+\frac{\sigma_1}{\sigma-\sigma_1}}\|u\|_{X(t)}^p.\label{3.3.1}
\end{align}
Next, we estimate $\||D|^su^{\rm non}(t,\cdot)\|_{L^2}$. We have
\begin{align*}
    \||D|^su^{\rm non}u(t,\cdot)\|_{L^2} &\lesssim \int_0^{\frac{t}{2}} (1+t-\tau)^{-\frac{n}{2(\sigma-\sigma_1)}(\frac{1}{m} -\frac{1}{2})-\frac{s}{2(\sigma-\sigma_1)}+\frac{\sigma_1}{\sigma-\sigma_1}} \| |u_t(\tau,\cdot)|^p \|_{L^m \cap L^2 \cap \dot{H}^{s-2\sigma_2}} d\tau\\
    &\quad+ \int_{\frac{t}{2}}^t (1+t-\tau)^{-\frac{s}{2(\sigma-\sigma_1)}+\frac{\sigma_1}{\sigma-\sigma_1}} \| |u_t(\tau,\cdot)|^p \|_{L^2 \cap \dot{H}^{s-2\sigma_2}} d\tau. 
\end{align*}
Therefore, we need to estimate the norm $\||u_t(\tau,\cdot)|^p\|_{\dot{H}^{s-2\sigma_2}}$. Using Proposition \ref{FractionalPowers} and Proposition \ref{Embedding} with $s^{*} < \frac{n}{2} < s-2\sigma_2 < p$, we obtain
\begin{align*}
    \||u_t(\tau,\cdot)|^p\|_{\dot{H}^{s-2\sigma_2}} &\lesssim \|u_t(\tau,\cdot)\|_{\dot{H}^{s-2\sigma_2}} \|u_t(\tau,\cdot)\|_{L^{\infty}}^{p-1}\\
    &\lesssim \|u_t(\tau,\cdot)\|_{\dot{H}^{s-2\sigma_2}} \left(\|u_t(\tau,\cdot)\|_{\dot{H}^{s^{*}}}+\|u_t(\tau,\cdot)\|_{\dot{H}^{s-2\sigma_2}}\right)^{p-1}.
\end{align*}
After applying the fractional Gagliardo-Nirenberg inequality, it follows
\begin{align*}
\|u_t(\tau,\cdot)\|_{\dot{H}^{s^{*}}} &\lesssim \|u_t(\tau,\cdot)\|_{L^2}^{1-\theta} \| |D|^{s-2\sigma_2}u_t(\tau,\cdot)\|_{L^2}^{\theta} \\
&\lesssim (1+\tau)^{-\frac{n}{2(\sigma-\sigma_1)}(\frac{1}{m}-\frac{1}{2})-1+\frac{\sigma_1}{\sigma-\sigma_1}-\frac{s^{*}}{2(\sigma-\sigma_1)}} \|u\|_{X(\tau)},
\end{align*}
where $\theta = \frac{s^{*}}{s-2\sigma_2}$. Hence, we derive
\begin{align*}
\||u_t(\tau,\cdot)|^p\|_{\dot{H}^{s-2\sigma_2}} &\lesssim (1+\tau)^{p(-\frac{n}{2(\sigma-\sigma_1)}(\frac{1}{m}-\frac{1}{2})-1 +\frac{\sigma_1}{\sigma-\sigma_1})-\frac{s-2\sigma_2}{2(\sigma-\sigma_1)}-(p-1)\frac{s^{*}}{2(\sigma-\sigma_1)}} \|u\|_{X(\tau)}^p \\
&= (1+\tau)^{p(-\frac{n}{2(\sigma-\sigma_1)}(\frac{1}{m}-\frac{1}{2})-1 +\frac{\sigma_1}{\sigma-\sigma_1})-\frac{s-2\sigma_2}{2(\sigma-\sigma_1)}-(p-1)\frac{\frac{n}{2}-\epsilon}{2(\sigma-\sigma_1)}}\|u\|_{X(\tau)}^p \\
&\lesssim (1+\tau)^{p(-\frac{n}{2m(\sigma-\sigma_1)}-1 +\frac{\sigma_1}{\sigma-\sigma_1})+ (p-1)\frac{\epsilon}{2(\sigma-\sigma_1)}}\|u\|_{X(\tau)}^p \\
&\lesssim (1+\tau)^{-\frac{pn}{2(\sigma-\sigma_1)}(\frac{1}{m}-\frac{1}{2p})-\frac{p(\sigma-2\sigma_1)}{\sigma-\sigma_1}}\|u\|_{X(\tau)}^p,
\end{align*}
if we choose $s^{*} = \frac{n}{2}-\epsilon$, where $\epsilon$ is a sufficiently small positive constant. Here we have
\begin{align*}
    \||u_t(\tau,\cdot)|^p\|_{\dot{H}^{s-2\sigma_2}} &\lesssim (1+\tau)^{-\frac{pn}{2(\sigma-\sigma_1)}(\frac{1}{m}-\frac{1}{2p})-\frac{p(\sigma-2\sigma_1)}{\sigma-\sigma_1}}\|u\|_{X(\tau)}^p \\
    &\lesssim(1+\tau)^{-\frac{n}{2m(\sigma-\sigma_1)}(p-1)-\frac{p(\sigma-2\sigma_1)}{\sigma-\sigma_1}}\|u\|_{X(\tau)}^p.
\end{align*}
For this reason, combining inequality \eqref{equation3.3.1}, we can conclude the following estimates:
\begin{align}
    &\int_0^{\frac{t}{2}} (1+t-\tau)^{-\frac{n}{2(\sigma-\sigma_1)}(\frac{1}{m} -\frac{1}{2})-\frac{s}{2(\sigma-\sigma_1)}+\frac{\sigma_1}{\sigma-\sigma_1}} \| |u_t(\tau,\cdot)|^p \|_{L^m \cap L^2 \cap \dot{H}^{s-2\sigma_2}} d\tau \notag\\
    &\qquad\quad\lesssim (1+t)^{-\frac{n}{2(\sigma-\sigma_1)}(\frac{1}{m} -\frac{1}{2})-\frac{s}{2(\sigma-\sigma_1)}+\frac{\sigma_1}{\sigma-\sigma_1}} \|u\|_{X(t)}^p\notag\\
    &\qquad\quad\lesssim (1+t)^{-\frac{\sigma_2}{\sigma-\sigma_1}} \|u\|_{X(t)}^p.\label{Main.estimate3.3.1}
\end{align}
On the other hand, we also have it
\begin{align}
    &\int_{\frac{t}{2}}^t (1+t-\tau)^{-\frac{s}{2(\sigma-\sigma_1)}+\frac{\sigma_1}{\sigma-\sigma_1}} \| |u_t(\tau,\cdot)|^p \|_{L^2 \cap \dot{H}^{s-2\sigma_2}} d\tau \notag\\&\quad\lesssim 
    \begin{cases}
    \|u\|_{X(t)}^p(1+t)^{-\frac{pn}{2(\sigma-\sigma_1)}(\frac{1}{m}-\frac{1}{2p})-\frac{p(\sigma-2\sigma_1)}{\sigma-\sigma_1}} \left((1+t)^{1+\frac{\sigma_1}{\sigma-\sigma_1}-\frac{s}{2(\sigma-\sigma_1)}}+1\right)
    &\text{ if } s \ne 2\sigma,\\
    \|u\|_{X(t)}^p(1+t)^{-\frac{pn}{2(\sigma-\sigma_1)}(\frac{1}{m}-\frac{1}{2p})-\frac{p(\sigma-2\sigma_1)}{\sigma-\sigma_1}} \log(1+t) &\text{ if } s = 2\sigma.
    \end{cases}\notag\\
    &\lesssim (1+t)^{-\frac{\sigma_2}{\sigma-\sigma_1}} \|u\|_{X(t)}^p.\label{Main.estimate3.3.2}
\end{align}
The estimate \eqref{Main.estimate3.3.2} is obtained from $s > 2\sigma_2+\frac{n}{2} > 2(\sigma_2+\sigma_1)$ imply
\begin{align*}
    &-\frac{pn}{2(\sigma-\sigma_1)}(\frac{1}{m}-\frac{1}{2p})-\frac{p(\sigma-2\sigma_1)}{\sigma-\sigma_1}+1+\frac{\sigma_1}{\sigma-\sigma_1}-\frac{s}{2(\sigma-\sigma_1)} < -\frac{\sigma_2}{\sigma-\sigma_1}\\
    \text{ and }&\\
    &-\frac{pn}{2(\sigma-\sigma_1)}(\frac{1}{m}-\frac{1}{2p})-\frac{p(\sigma-2\sigma_1)}{\sigma-\sigma_1} < -\frac{\sigma_2}{\sigma-\sigma_1}.
\end{align*}
From \eqref{Main.estimate3.3.1} and \eqref{Main.estimate3.3.2}, we have the conclusion
\begin{align}
    \||D|^su^{\rm non}(t,\cdot)\|_{L^2} \lesssim (1+t)^{-\frac{\sigma_2}{\sigma-\sigma_1}}\|u\|_{X(t)}^p. \label{3.3.2}
\end{align}
Finally, we estimate the norms $\|u^{\rm non}_t(t,\cdot)\|_{L^2}$ and $\||D|^{s-2\sigma_2}u_t^{\rm non}(t,\cdot)\|_{L^2}$. We have
\begin{align}
    \|u_t^{\rm non}(t,\cdot)\|_{L^2} \lesssim& (1+t)^{-\frac{n}{2(\sigma-\sigma_1)}(\frac{1}{m}-\frac{1}{2})-1+ \frac{\sigma_1}{\sigma-\sigma_1}}\|u\|_{X(t)}^p \int_0^{\frac{t}{2}}(1+\tau)^{-\frac{n}{2m(\sigma-\sigma_1)}(p-1)-\frac{p(\sigma-2\sigma_1)}{\sigma-\sigma_1}}d\tau \notag \\
    &+ (1+t)^{-\frac{n}{2m(\sigma-\sigma_1)}(p-1)-\frac{p(\sigma-2\sigma_1)}{\sigma-\sigma_1}}\|u\|_{X(t)}^p \int_{\frac{t}{2}}^t (1+t-\tau)^{-\frac{n}{2(\sigma-\sigma_1)}(\frac{1}{m}-\frac{1}{2})-1+ \frac{\sigma_1}{\sigma-\sigma_1}} d\tau\notag\\
    \lesssim& (1+t)^{-\frac{n}{2(\sigma-\sigma_1)}(\frac{1}{m}-\frac{1}{2})-1+\frac{\sigma_1}{\sigma-\sigma_1}} \|u\|_{X(t)}^p \label{3.3.3}
    \end{align}
and
    \begin{align}
\||D|^{s-2\sigma_2} u_t^{\rm non}(t,\cdot)\|_{L^2} \lesssim& (1+t)^{-\frac{n}{2(\sigma-\sigma_1)}(\frac{1}{m}-\frac{1}{2})-1+ \frac{\sigma_1}{\sigma-\sigma_1}-\frac{s-2\sigma_2}{2(\sigma-\sigma_1)}}\|u\|_{X(t)}^p \int_0^{\frac{t}{2}}\| |u_t(\tau,\cdot)|^p\|_{L^m \cap L^2 \cap \dot{H}^{s-2\sigma_2}}d\tau \notag\\
    &+ \|u\|_{X(t)}^p\int_{\frac{t}{2}}^t (1+t-\tau)^{-\frac{s-2\sigma_2}{2(\sigma-\sigma_1)}}\||u_t(\tau,\cdot)|^p\|_{L^2 \cap \dot{H}^{s-2\sigma_2}}d\tau \notag\\
\lesssim& (1+t)^{-\frac{n}{2(\sigma-\sigma_1)}(\frac{1}{m}-\frac{1}{2})-1+\frac{\sigma_1}{\sigma-\sigma_1}-\frac{s-2\sigma_2}{2(\sigma-\sigma_1)}}\|u\|_{X(t)}^p.\label{3.3.4}
\end{align}
The estimate \eqref{3.3.3} occurs because $p > 1+ s-2\sigma_2$ implies that
\begin{align*}
 -\frac{n}{2m(\sigma-\sigma_1)}(p-1)-\frac{p(\sigma-2\sigma_1)}{\sigma-\sigma_1} < -\frac{n}{2(\sigma-\sigma_1)}\left(\frac{1}{m}-\frac{1}{2}\right)-1+\frac{\sigma_1}{\sigma-\sigma_1}.
\end{align*}
The estimate \eqref{3.3.4} occurs because the second inequality of \eqref{equation3.3.1} and $s > 2(\sigma+\sigma_2-\sigma_1)$ imply that
\begin{align*}
    &\int_{\frac{t}{2}}^t (1+t-\tau)^{-\frac{s-2\sigma_2}{2(\sigma-\sigma_1)}}\||u_t(\tau,\cdot)|^p\|_{L^2 \cap \dot{H}^{s-2\sigma_2}}d\tau\\
    &\qquad\quad \lesssim (1+t)^{-\frac{pn}{2(\sigma-\sigma_1)}(\frac{1}{m}-\frac{1}{2p})-\frac{p(\sigma-2\sigma_1)}{\sigma-\sigma_1}} \left(1+(1+t)^{1-\frac{s-2\sigma_2}{2(\sigma-\sigma_1)}}\right)\\
    &\qquad\quad\lesssim  (1+t)^{-\frac{pn}{2(\sigma-\sigma_1)}(\frac{1}{m}-\frac{1}{2p})-\frac{p(\sigma-2\sigma_1)}{\sigma-\sigma_1}}\\
    &\qquad\quad \lesssim (1+t)^{-\frac{n}{2(\sigma-\sigma_1)}(\frac{1}{m}-\frac{1}{2})-1+\frac{\sigma_1}{\sigma-\sigma_1}-\frac{s-2\sigma_2}{2(\sigma-\sigma_1)}}.
\end{align*}
From estimates \eqref{3.3.1}, \eqref{3.3.2}, \eqref{3.3.3} and \eqref{3.3.4}, we can conclude the estimate \eqref{2.100}.

\textit{Next, let us prove the inequality \eqref{2.33}.} Using again the $(L^m \cap L^2)-L^2$ estimates if $\tau \in \left[0, \frac{t}{2}\right]$ and the $L^2-L^2$ estimates if $\tau \in \left[\frac{t}{2}, t\right]$ from Theorem \ref{Linear_Decay}, we derive for two functions $u$ and $v$ from $X(t)$ the following estimates:
\begin{align*}
    \|\mathcal{N}[u](t,\cdot)-\mathcal{N}[v](t,\cdot)\|_{L^2} &\lesssim \int_0^{\frac{t}{2}}(1+t-\tau)^{-\frac{n}{2(\sigma-\sigma_1)}(\frac{1}{m}-\frac{1}{2})+\frac{\sigma_1}{\sigma-\sigma_1}} \| |u_t(\tau,\cdot)|^p-|v_t(\tau,\cdot)|^p\|_{L^m \cap L^2} d\tau\\
    &\quad+ \int_{\frac{t}{2}}^t (1+t-\tau)^{-\frac{n}{2(\sigma-\sigma_1)}(\frac{1}{m}-\frac{1}{2})+\frac{\sigma_1}{\sigma-\sigma_1}}\| |u_t(\tau,\cdot)|^p-|v_t(\tau,\cdot)|^p\|_{L^m \cap L^2} d\tau
\end{align*}
and
\begin{align*}    
    &\||D|^{s}\left(\mathcal{N}[u](t,\cdot)-\mathcal{N}[v](t,\cdot)\right)\|_{L^2} \\
    &\quad \lesssim \int_0^{\frac{t}{2}} (1+t-\tau)^{-\frac{n}{2(\sigma-\sigma_1)}(\frac{1}{m} -\frac{1}{2})-\frac{s}{2(\sigma-\sigma_1)}+\frac{\sigma_1}{\sigma-\sigma_1}} \| |u_t(\tau,\cdot)|^p -|v_t(\tau,\cdot)|^p\|_{L^m \cap L^2 \cap \dot{H}^{s-2\sigma_2}} d\tau\\
    &\qquad + \int_{\frac{t}{2}}^t (1+t-\tau)^{-\frac{s-2\sigma_1}{2(\sigma-\sigma_1)}} \| |u_t(\tau,\cdot)|^p -|v_t(\tau,\cdot)|^p\|_{L^2 \cap \dot{H}^{s-2\sigma_2}} d\tau.
\end{align*}
By using Holder's inequality, we get
\begin{align*}
    \| |u_t(\tau,\cdot)|^p -|v_t(\tau,\cdot)|^p\|_{L^m} &\lesssim \|u_t(\tau,\cdot)-v_t(\tau,\cdot)\|_{L^{mp}} \left(\|u_t(\tau,\cdot)\|_{L^{mp}}^{p-1}+\|v_t(\tau,\cdot)\|_{L^{mp}}^{p-1}\right), \\
    \| |u_t(\tau,\cdot)|^p -|v_t(\tau,\cdot)|^p\|_{L^2} &\lesssim \|u_t(\tau,\cdot)-v_t(\tau,\cdot)\|_{L^{2p}} \left(\|u_t(\tau,\cdot)\|_{L^{2p}}^{p-1}+\|v_t(\tau,\cdot)\|_{L^{2p}}^{p-1}\right).
\end{align*}
Applying the fractional Gagliardo-Nirenberg inequality to the term
\begin{equation*}
    \|u_t(\tau,\cdot)-v_t(\tau,\cdot)\|_{L^h}, \|u_t(\tau,\cdot)\|_{L^h}, \|v_t(\tau,\cdot)\|_{L^h}
\end{equation*}
with $h = 2p \text{ and } h = mp$, we have
\begin{align*}
     \| |u_t(\tau,\cdot)|^p -|v_t(\tau,\cdot)|^p\|_{L^{m}} &\lesssim (1+\tau)^{-\frac{n}{2m(\sigma-\sigma_1)}(p-1)-\frac{p(\sigma-2\sigma_1)}{\sigma-\sigma_1}} \|u-v\|_{X(t)}\left(\|u\|_{X(t)}^{p-1}+\|v\|_{X(t)}^{p-1}\right), \\
     \| |u_t(\tau,\cdot)|^p -|v_t(\tau,\cdot)|^p\|_{L^{2}} &\lesssim (1+\tau)^{-\frac{pn}{2(\sigma-\sigma_1)}(\frac{1}{m}-\frac{1}{2p})-\frac{p(\sigma-2\sigma_1)}{\sigma-\sigma_1}} \|u-v\|_{X(t)}\left(\|u\|_{X(t)}^{p-1}+\|v\|_{X(t)}^{p-1}\right).
\end{align*}
We now focus our attention on estimates $\| |u_t(\tau,\cdot)|^p -|v_t(\tau,\cdot)|^p\|_{ \dot{H}^{s-2\sigma_2}}$. By using the integral representation 
\begin{equation*}
    |u_t(\tau,\cdot)|^p -|v_t(\tau,\cdot)|^p = p \int_0^1\left(u_t(\tau,x)-v_t(\tau,x)\right)G\left(\omega u_t(\tau,x) + (1-\omega)v_t(\tau,x)\right) d\omega,
\end{equation*}
where $G(u)= u|u|^{p-2}$, we obtain
\begin{align*}
    &\| |u_t(\tau,\cdot)|^p -|v_t(\tau,\cdot)|^p\|_{ \dot{H}^{s-2\sigma_2}} \\
    &\qquad \lesssim \int_0^1 \left\||D|^{s-2\sigma_2}\left(\left(u_t(\tau,x)-v_t(\tau,x)\right)G\left(\omega u_t(\tau,x) + (1-\omega)v_t(\tau,x)\right)\right)\right\|_{L^2}d\omega.
\end{align*}
Thanks to the fractional powers rule, we can proceed as follows
\begin{align*}
    \| |u_t(\tau,\cdot)|^p -|v_t(\tau,\cdot)|^p\|_{ \dot{H}^{s-2\sigma_2}} &\lesssim \int_0^1 \|u_t(\tau,\cdot)-v_t(\tau,\cdot)\|_{\dot{H}^{s-2\sigma_2}}\|G(\omega u_t(\tau,\cdot)+(1-\omega)v_t(\tau,\cdot))\|_{L^{\infty}}d\omega \\
    &\quad +\int_0^1 \|u_t(\tau,\cdot)-v_t(\tau,\cdot)\|_{L^{\infty}}\|G(\omega u_t(\tau,\cdot)+(1-\omega)v_t(\tau,\cdot))\|_{\dot{H}^{s-2\sigma_2}}d\omega\\
    &\lesssim \int_0^1 \|u_t(\tau,\cdot)-v_t(\tau,\cdot)\|_{\dot{H}^{s-2\sigma_2}}\|\omega u_t(\tau,\cdot)+(1-\omega)v_t(\tau,\cdot)\|_{L^{\infty}}^{p-1}d\omega \\
    &\quad+\int_0^1 \|u_t(\tau,\cdot)-v_t(\tau,\cdot)\|_{L^{\infty}}\|G(\omega u_t(\tau,\cdot)+(1-\omega)v_t(\tau,\cdot))\|_{\dot{H}^{s-2\sigma_2}}d\omega.
\end{align*}
Applying Proposition \ref{FractionalPowers} with $p > 2$ and $s-2\sigma_2 \in \left(\frac{n}{2}, p-1\right)$, we get
\begin{align*}
    & \|G(\omega u_t(\tau,\cdot)+(1-\omega)v_t(\tau,\cdot))\|_{\dot{H}^{s-2\sigma_2}} \\
    &\qquad \lesssim \|\omega u_t(\tau,\cdot)+(1-\omega)v_t(\tau,\cdot)\|_{\dot{H}^{s-2\sigma_2}} \|\omega u_t(\tau,\cdot)+(1-\omega)v_t(\tau,\cdot)\|_{L^{\infty}}^{p-2}.
\end{align*}
Using Proposition \ref{Embedding} with a suitable $s^{*} < \frac{n}{2}$, we obtain
\begin{equation*}
    \|u_t(\tau,\cdot)-v_t(\tau,\cdot)\|_{L^{\infty}} \lesssim \|u_t(\tau,\cdot)-v_t(\tau,\cdot)\|_{\dot{H}^{s^{*}}} + \|u_t(\tau,\cdot)-v_t(\tau,\cdot)\|_{\dot{H}^{s-2\sigma_2}}.
\end{equation*}
Applying the fractional Gagliardo-Nirenberg inequality, we have
\begin{align*}
    \|u_t(\tau,\cdot)-v_t(\tau,\cdot)\|_{\dot{H}^{s^{*}}} &\lesssim \|u_t(\tau,\cdot)-v_t(\tau,\cdot)\|_{L^2}^{1-\theta}\| |D|^{s-2\sigma_2}\left(u_t(\tau,\cdot)-v_t(\tau,\cdot)\right)\|_{L^2}^{\theta} \\
    &\lesssim (1+\tau)^{-\frac{n}{2(\sigma-\sigma_1)}(\frac{1}{m}-\frac{1}{2})-1+\frac{\sigma_1}{\sigma-\sigma_1}-\frac{s^{*}}{2(\sigma-\sigma_1)}} \|u-v\|_{X(\tau)},
\end{align*}
where $\theta = \frac{s^{*}}{s-2\sigma_2}$. In the same way, we get
\begin{equation*}
    \|\omega u_t(\tau,\cdot)+(1-\omega)v_t(\tau,\cdot)\|_{L^{\infty}} \lesssim (1+\tau)^{-\frac{n}{2(\sigma-\sigma_1)}\left(\frac{1}{m}-\frac{1}{2}\right)-1+\frac{\sigma_1}{\sigma-\sigma_1}-\frac{s^{*}}{2(\sigma-\sigma_1)}} \|\omega u +(1-\omega)v\|_{X(\tau)}.
\end{equation*}
Therefore, we may conclude
\begin{align*}
     &\| |u_t(\tau,\cdot)|^p -|v_t(\tau,\cdot)|^p\|_{ \dot{H}^{s-2\sigma_2}} \\
     &\quad\lesssim (1+\tau)^{p(-\frac{n}{2(\sigma-\sigma_1)}(\frac{1}{m}-\frac{1}{2})-1+\frac{\sigma_1}{\sigma-\sigma_1})-\frac{s-2\sigma_2}{2(\sigma-\sigma_1)}-(p-1)\frac{s^{*}}{2(\sigma-\sigma_1)}} \|u-v\|_{X(t)} \left(\|u\|_{X(t)}^{p-1}+\|v\|_{X(t)}^{p-1}\right).
\end{align*}
Then, repeating the steps in the proof of estimate \eqref{2.100}, we have estimate \eqref{2.33}. Summarizing, the proof of Theorem \ref{Global_Existence-3} is completed.
\end{proof}

%=====================================================================
\subsection{Proof of asymptotic profiles}\label{Semi-LinearAsymptotic.Sec}
In this part, we will prove the estimate in Theorem \ref{Semi-linear_Asym}. Firstly, we consider the following Cauchy problem:
\begin{equation} \label{Heat-Problem}
\begin{cases}
v_t + (-\Delta)^{\sigma-\sigma_1}v = 0, &\quad x\in \R^n,\, t \ge 0, \\
v(0,x) = v_0(x), &\quad x\in \R^n,
\end{cases}
\end{equation}
and we choose the suitable data $v_0 = u_0 + I_{\sigma_1} u_1$ where $I_{\sigma_1}$ is the Riesz potential.
We can easily find that the solutions to \eqref{Heat-Problem} are written by the following formula:
\begin{equation}\label{eq:formula-v}
    v(t,x) = \mathcal{G}_0 \ast v_0 = \mathcal{G}_0 \ast (u_0 + I_{\sigma_1} \ast u_1) = \mathcal{G}_0 \ast u_0 + \mathcal{G}_1 \ast u_1.
\end{equation}

Next, we introduce some auxiliary propositions for our proof.

\begin{proposition}\label{proposition:Asy2}
    The Sobolev solutions to \eqref{Linear_Main.Eq} satisfy the following estimates for $j = 0, 1$:
    \begin{align}\label{eq:proposition-asy2}
        \left\|\partial^j_t |D|^s (u(t, \cdot) - v(t, \cdot))\right\|_{L^2} 
        &\lesssim t^{\Gamma(s)} \|(u_0, u_1)\|_{L^1} + e^{-t} \|u_0\|_{H^{s+2j\sigma_2} \cap H^{s+2j(\sigma-\sigma_1)}} \nonumber\\ 
        &\quad + e^{-t} \|u_1\|_{H^{[s+2(j-1)\sigma_2]^+} \cap H^{[s+2j(\sigma-\sigma_1)-2\sigma_1]^+}},
    \end{align}
    for large $t \geq 1$, any  $s \geq 0$ and
    \begin{equation*}
        \Gamma(s) \coloneqq 
        \begin{cases}
            \max \left\{-\frac{n}{4(\sigma-\sigma_1)}-\frac{s}{2(\sigma-\sigma_1)}-j-\frac{\sigma-3\sigma_1}{\sigma-\sigma_1}, -\frac{n}{4\sigma_1}- \frac{s}{2\sigma_1}-j+1\right\} &\text{ if } \sigma_1 + \sigma_2 > \sigma,\\[0.5em]
            \max \left\{-\frac{n}{4(\sigma-\sigma_1)}-\frac{s}{2(\sigma-\sigma_1)}-j-\frac{\sigma_2-2\sigma_1}{\sigma-\sigma_1}, -\frac{n}{4\sigma_1}- \frac{s}{2\sigma_1}-j+1\right\} &\text{ if } \sigma_1 + \sigma_2 \leq \sigma.
        \end{cases}
    \end{equation*}
\end{proposition}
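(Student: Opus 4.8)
The plan is to recycle the kernel decomposition introduced in Section~\ref{LinearAsymptotic.Sec}. Writing $\widehat{K_i}=\widehat{K_i^1}+\widehat{K_i^2}$ for $i=0,1$ and recalling the representation \eqref{eq:formula-v} of $v$, one has the splitting
\begin{align*}
u(t,\cdot)-v(t,\cdot)&=\big(K^1_0(t,\cdot)-\mathcal{G}_0(t,\cdot)\big)\ast u_0+K^2_0(t,\cdot)\ast u_0\\
&\qquad+\big(K^1_1(t,\cdot)-\mathcal{G}_1(t,\cdot)\big)\ast u_1+K^2_1(t,\cdot)\ast u_1,
\end{align*}
so it suffices to bound $\partial_t^j|D|^s$ of each of the four convolutions in $L^2$ and keep the slowest decaying exponent. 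Throughout we use $m=1$ (so $m_0=2$, and the standing requirement $n>2m_0\sigma_1$ becomes $n>4\sigma_1$); when $\sigma_1=0$ every $K^2$-contribution turns out to be exponentially small and the terms $-n/4\sigma_1$ in $\Gamma(s)$ are read as $-\infty$.

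For the two terms containing $K^1-\mathcal{G}$ I would invoke Proposition~\ref{proposition3.6} verbatim. Estimate \eqref{proposition3.6.2}, with $m=1$, produces exactly the first entry of $\Gamma(s)$ in each of the cases $\sigma_1+\sigma_2>\sigma$ and $\sigma_1+\sigma_2\le\sigma$, together with precisely the exponential term $e^{-ct}\|u_1\|_{H^{[s+2(j-1)\sigma_2]^+}\cap H^{[s+2j(\sigma-\sigma_1)-2\sigma_1]^+}}$. Estimate \eqref{proposition3.6.1} is not sharp for $\Gamma(s)$ but is dominated: since $\sigma_1\ge0$ we have $-\tfrac{\sigma-2\sigma_1}{\sigma-\sigma_1}\le-\tfrac{\sigma-3\sigma_1}{\sigma-\sigma_1}$ and $-\tfrac{\sigma_2-\sigma_1}{\sigma-\sigma_1}\le-\tfrac{\sigma_2-2\sigma_1}{\sigma-\sigma_1}$, hence $(K^1_0-\mathcal{G}_0)\ast u_0$ decays at least as fast as the first entry of $\Gamma(s)$, while its exponential part reproduces $e^{-ct}\|u_0\|_{H^{s+2j\sigma_2}\cap H^{s+2j(\sigma-\sigma_1)}}$.

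For the two $K^2$-terms I would split into low, middle and high frequencies. On low frequencies, estimate \eqref{lemma2.6.4} gives exactly the second entry $-\tfrac{n}{4\sigma_1}-\tfrac{s}{2\sigma_1}-j+1$ of $\Gamma(s)$, whereas \eqref{lemma2.6.2} gives the faster rate $-\tfrac{n}{4\sigma_1}-\tfrac{s}{2\sigma_1}-j-\tfrac{\sigma-2\sigma_1}{\sigma_1}$ (faster since $\sigma_1\le\sigma$ forces $-\tfrac{\sigma-2\sigma_1}{\sigma_1}\le1$). The middle zone contributes only $e^{-ct}$ (the Remark after Theorem~\ref{Linear_Decay}, applied mutatis mutandis to $K_i^2$). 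On high frequencies, from $\widehat{K^2_0}=\lambda_1e^{\lambda_2t}/(\lambda_1-\lambda_2)$, $\widehat{K^2_1}=-e^{\lambda_2t}/(\lambda_1-\lambda_2)$ and the asymptotics \eqref{2.16} one gets, for $|\xi|\ge1/\varepsilon^*$,
\[
|\xi|^s\big|\partial_t^j\widehat{K^2_0}\big|\chi_{\rm H}\lesssim|\xi|^{s+2(\sigma-2\sigma_2)+2j\sigma_2}e^{-c|\xi|^{2\sigma_2}t},\qquad |\xi|^s\big|\partial_t^j\widehat{K^2_1}\big|\chi_{\rm H}\lesssim|\xi|^{s-2\sigma_2+2j\sigma_2}e^{-c|\xi|^{2\sigma_2}t};
\]
since $|\xi|^{2\sigma_2}\gtrsim1$ on the support of $\chi_{\rm H}$, writing $e^{-c|\xi|^{2\sigma_2}t}=e^{-c|\xi|^{2\sigma_2}(t-1)}e^{-c|\xi|^{2\sigma_2}}$ and absorbing the polynomial in $|\xi|$ into the last (super-exponentially decaying) factor shows both right-hand sides are $\lesssim e^{-ct}$ uniformly in $\xi$ for $t\ge1$. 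By Plancherel this yields $\|\partial_t^j|D|^s(K^2_{i\mathrm H}\ast u_i)\|_{L^2}\lesssim e^{-ct}\|u_i\|_{L^2}$, absorbed into the regularity terms already present.

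Collecting the four pieces and taking the largest exponent gives $t^{\Gamma(s)}\|(u_0,u_1)\|_{L^1}$ plus the stated exponential terms, which is the assertion. The only genuine work is the bookkeeping just described — checking, in the two regimes $\sigma_1+\sigma_2\gtrless\sigma$, that the $K_0$-contributions are dominated by the $K_1$-contributions and that all middle- and high-frequency remainders collapse to $e^{-ct}$ with the asserted Sobolev regularity — so no new estimate is needed beyond Propositions~\ref{lemma2.6} and \ref{proposition3.6} (which in turn rest on Proposition~\ref{proposition3.5}); this comparison of competing rates is the main, and only mild, obstacle.
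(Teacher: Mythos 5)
Your proof is correct and follows essentially the same route as the paper: the authors also split $u-v$ into the four convolutions $(K^1_0-\mathcal{G}_0)\ast u_0$, $K^2_0\ast u_0$, $(K^1_1-\mathcal{G}_1)\ast u_1$, $K^2_1\ast u_1$ and conclude by combining \eqref{proposition3.6.1}--\eqref{proposition3.6.2} with \eqref{lemma2.6.2}, \eqref{lemma2.6.4}, \eqref{lemma2.6.5} and \eqref{lemma2.6.6}. Your explicit high-frequency verification and rate comparisons are just the bookkeeping the paper leaves implicit.
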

\begin{proof}
    We divide the left-hand term of \eqref{eq:proposition-asy2} into parts as follows:
    \begin{align*}
    \left\|\partial_t^j |D|^s \Big(u(t,\cdot)- v(t,\cdot) \Big)\right\|_{L^2}
    & \lesssim 
    \left\|\partial_t^j |D|^s \bigg(\big(K^1_0(t,x) - \mathcal{G}_0(t,x)\big) \ast u_0(x)\bigg)(t,\cdot) \right\|_{L^2}\\
    &\quad + \left\|\partial_t^j |D|^s \bigg(\big(K^1_1(t,x) - \mathcal{G}_1(t,x)\big) \ast u_1(x) \bigg)(t,\cdot) \right\|_{L^2}\\
    &\quad + \left\|\partial_t^j |D|^s \big(K^2_0(t,x) \ast u_0(t,x)\big)(t,\cdot) \right\|_{L^2}\\
    &\quad + \left\|\partial_t^j |D|^s \big(K^2_1(t,x) \ast u_1(t,x)\big)(t,\cdot) \right\|_{L^2}\\
    & \coloneqq J_1 + J_2 + J_3 + J_4.
    \end{align*}
    For $J_1$ and $J_2$, we use estimates \eqref{proposition3.6.1} and \eqref{proposition3.6.2} in Proposition \ref{proposition3.6}, respectively. For $J_3$, we combine estimates \eqref{lemma2.6.2} and \eqref{lemma2.6.5}. For $J_4$, we combine estimates \eqref{lemma2.6.4} and \eqref{lemma2.6.6}. Then, we can conclude the desired estimates.
\end{proof}

\begin{proposition}\label{proposition:Asy3}
    We have the following estimate for any $t \geq 0$, any $s \geq 0$ and $j=0,1$:
    \begin{align}\label{eq:proposition-asy3-2}
        \left\|\partial^j_t |D|^s \mathcal{G}_1(t, \cdot) \right\|_{L^2} 
        \lesssim t^{-\frac{n}{4(\sigma-\sigma_1)} - \frac{s}{2(\sigma-\sigma_1)} - j + \frac{\sigma_1}{\sigma-\sigma_1}}.
    \end{align}
    Moreover, the Sobolev solutions to \eqref{Heat-Problem} satisfy the following estimates for large $t \geq 1$, any $s \geq 0$ and $j=0,1$
    \begin{align}\label{eq:proposition-asy3-1}
        \left\|\partial^j_t |D|^s v(t, \cdot) \right\|_{L^2} 
        \lesssim t^{-\frac{n}{4(\sigma-\sigma_1)} - \frac{s}{2(\sigma-\sigma_1)} - j + \frac{\sigma_1}{\sigma-\sigma_1}} \|(u_0, u_1)\|_{L^1}.
    \end{align}
\end{proposition}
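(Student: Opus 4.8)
The plan is to deduce both estimates from an explicit computation on the Fourier side, in the same spirit as the derivation of \eqref{the1.1.1}.

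\textbf{Proof of \eqref{eq:proposition-asy3-2}.} First I would observe that on the Fourier side
$\mathfrak{F}\big(\partial_t^j|D|^s\mathcal{G}_1(t,\cdot)\big)(\xi)=(-1)^j|\xi|^{s+2j(\sigma-\sigma_1)-2\sigma_1}\,e^{-|\xi|^{2(\sigma-\sigma_1)}t}$, so that by the Parseval--Plancherel identity
\begin{equation*}
\big\|\partial_t^j|D|^s\mathcal{G}_1(t,\cdot)\big\|_{L^2}^2=\int_{\R^n}|\xi|^{2s+4j(\sigma-\sigma_1)-4\sigma_1}\,e^{-2|\xi|^{2(\sigma-\sigma_1)}t}\,d\xi .
\end{equation*}
The change of variables $\xi=t^{-\frac{1}{2(\sigma-\sigma_1)}}\eta$ pulls out the factor $t^{-\frac{n}{2(\sigma-\sigma_1)}-\frac{2s+4j(\sigma-\sigma_1)-4\sigma_1}{2(\sigma-\sigma_1)}}$ and leaves the integral $\int_{\R^n}|\eta|^{2s+4j(\sigma-\sigma_1)-4\sigma_1}e^{-2|\eta|^{2(\sigma-\sigma_1)}}\,d\eta$. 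This last integral is finite: at infinity the Gaussian-type weight dominates, while integrability near $\xi=0$ amounts to $n+2s+4j(\sigma-\sigma_1)-4\sigma_1>0$, whose worst case $j=s=0$ is precisely the dimensional restriction $n>4\sigma_1$ assumed throughout the treatment of asymptotic profiles (cf. Theorem \ref{Linear_Asym}). Taking square roots and simplifying the exponent then reproduces exactly the computation \eqref{the1.1.1} and yields \eqref{eq:proposition-asy3-2}.

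\textbf{Proof of \eqref{eq:proposition-asy3-1}.} Next I would use the representation \eqref{eq:formula-v}, namely $v(t,\cdot)=\mathcal{G}_0(t,\cdot)\ast u_0+\mathcal{G}_1(t,\cdot)\ast u_1$, together with Young's convolution inequality $\|f\ast g\|_{L^2}\le\|f\|_{L^2}\|g\|_{L^1}$, to obtain
\begin{equation*}
\big\|\partial_t^j|D|^s v(t,\cdot)\big\|_{L^2}\le\big\|\partial_t^j|D|^s\mathcal{G}_0(t,\cdot)\big\|_{L^2}\|u_0\|_{L^1}+\big\|\partial_t^j|D|^s\mathcal{G}_1(t,\cdot)\big\|_{L^2}\|u_1\|_{L^1}.
\end{equation*}
The $\mathcal{G}_1$-term is already controlled by \eqref{eq:proposition-asy3-2}. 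For the $\mathcal{G}_0$-term, the identical Plancherel-plus-rescaling argument — now with the harmless weight $|\xi|^{2s+4j(\sigma-\sigma_1)}$, so that no restriction on $n$ is needed — gives $\big\|\partial_t^j|D|^s\mathcal{G}_0(t,\cdot)\big\|_{L^2}\lesssim t^{-\frac{n}{4(\sigma-\sigma_1)}-\frac{s}{2(\sigma-\sigma_1)}-j}$. Since $\sigma_1/(\sigma-\sigma_1)\ge 0$, for $t\ge 1$ this is bounded by $t^{-\frac{n}{4(\sigma-\sigma_1)}-\frac{s}{2(\sigma-\sigma_1)}-j+\frac{\sigma_1}{\sigma-\sigma_1}}$, which is the claimed rate; adding the two bounds and estimating $\|u_0\|_{L^1},\|u_1\|_{L^1}$ by $\|(u_0,u_1)\|_{L^1}$ finishes the proof.

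I do not expect any genuine obstacle: the statement is a bookkeeping exercise with the self-similar scaling of the anomalous-diffusion kernels $\mathcal{G}_0$ and $\mathcal{G}_1$. The only step that requires a moment's care is the convergence of the rescaled integral for $\mathcal{G}_1$ near the origin, which is exactly why the restriction $n>4\sigma_1$ enters (and is unnecessary for $\mathcal{G}_0$); the extra gain $t^{\sigma_1/(\sigma-\sigma_1)}$ in the target rate is used only to absorb the faster-decaying $\mathcal{G}_0$-contribution into the slower $\mathcal{G}_1$-rate.
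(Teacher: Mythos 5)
Your proposal is correct and follows essentially the same route as the paper: the first estimate is exactly the Plancherel-plus-rescaling computation \eqref{the1.1.1} (with the same observation that convergence near the origin in the worst case $j=s=0$ is what forces $n>4\sigma_1$), and the second is obtained from the representation \eqref{eq:formula-v} via Young's inequality, absorbing the faster $\mathcal{G}_0$-rate into the $\mathcal{G}_1$-rate for $t\ge 1$. Your write-up is in fact slightly more explicit than the paper's, which simply refers back to \eqref{the1.1.1} and says the arguments are repeated for the $\mathcal{G}_0$-term.
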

\begin{proof}
    From \eqref{the1.1.1}, we immediately have the estimate \eqref{eq:proposition-asy3-2}.
    In order to prove \eqref{eq:proposition-asy3-1}, we use the formula \eqref{eq:formula-v} and the Young inequality to have: 
    \begin{align*}
        \left\|\partial^j_t |D|^s v(t, \cdot) \right\|_{L^2} 
        &\lesssim \left\|\partial^j_t |D|^s \big(\mathcal{G}_0(t,x) \ast u_0(t,x)\big)(t, \cdot) \right\|_{L^2} 
        + \left\|\partial^j_t |D|^s \big(\mathcal{G}_1(t,x) \ast u_1(t,x)\big)(t, \cdot) \right\|_{L^2}\\
        &\lesssim \left\|\partial^j_t |D|^s \mathcal{G}_0(t, \cdot) \right\|_{L^2} \| u_0(t, \cdot)\|_{L^1}
        + \left\|\partial^j_t |D|^s \mathcal{G}_1(t, \cdot) \right\|_{L^2} \| u_1(t, \cdot)\|_{L^1}.
    \end{align*}
    We repeat some arguments in the proof of \eqref{eq:proposition-asy3-2} for each $I_1$ and $I_2$ to conclude \eqref{eq:proposition-asy3-1}. Hence, the proof of Proposition \ref{proposition:Asy3} is complete.
\end{proof}

\begin{proof}[\textbf{Proof of Theorem \ref{Semi-linear_Asym}}]
Firstly, we will prove in detail the estimate \eqref{eq:Semi-linear_Asym}. By using the estimate in Theorem \ref{Linear_Asym} and recalling the formula \eqref{2.31} of the solution to \eqref{Main.Eq}, we can reduce the estimate \eqref{eq:Semi-linear_Asym} to the following desired estimate:
\begin{equation*}
    \left\|\partial^j_t |D|^{2k\sigma_2} \left(\int_0^t K_1(t-\tau,x) \ast_x |u(\tau,x)|^p d\tau - M_0 \mathcal{G}_1(t,x)\right)\right\|_{L^2} 
    = o\left(t^{-\frac{n}{4(\sigma-\sigma_1)} - \frac{k\sigma_2}{\sigma-\sigma_1} - j + \frac{\sigma_1}{\sigma-\sigma_1}}\right),
\end{equation*}
and we can re-write the above estimate to
\begin{align}
    &\left\|\int_0^t \partial^j_t |D|^{2k\sigma_2}(K_1(t-\tau,x) \ast_x |u(\tau,x)|^p) d\tau - M_0 \partial^j_t |D|^{2k\sigma_2} \mathcal{G}_1(t,x) \right\|_{L^2} \nonumber \\
    &\qquad = o\left(t^{-\frac{n}{4(\sigma-\sigma_1)} - \frac{k\sigma_2}{\sigma-\sigma_1} - j + \frac{\sigma_1}{\sigma-\sigma_1}}\right), \label{eq:need-proof}
\end{align}
because of the fact that $K_1(0,x) \ast |u(t,x)|^p = 0$. Now we divide the left-hand side term of \eqref{eq:need-proof} in the $L^2$ norm into five parts as follows:
\begin{align*}
    \int_0^t \partial^j_t |D|^{2k\sigma_2}(K_1(t-\tau,x) &\ast_x |u(\tau,x)|^p) d\tau - M \partial^j_t |D|^{2k\sigma_2} \mathcal{G}_1\\
    &=\int_0^{\frac{t}{2}} \partial^j_t |D|^{2k\sigma_2} \left((K_1(t-\tau,x) - \mathcal{G}_1(t-\tau,x))\ast_x |u(\tau,x)|^p\right) d\tau\\
    &\quad +\int_{\frac{t}{2}}^{t} \partial^j_t |D|^{2k\sigma_2} (K_1(t-\tau,x) \ast_x |u(\tau,x)|^p) d\tau\\
    &\quad +\int_0^{\frac{t}{2}} \partial^j_t |D|^{2k\sigma_2} \left((\mathcal{G}_1(t-\tau,x) - \mathcal{G}_1(t,x))\ast_x |u(\tau,x)|^p\right) d\tau\\
    &\quad +\int_0^{\frac{t}{2}} \partial^j_t |D|^{2k\sigma_2} \left(\mathcal{G}_1(t,x) \ast_x |u(\tau,x)|^p - \left(\int_{\mathbb{R}^n} |u(\tau,y)|^p dy\right) \mathcal{G}_1(t,x)\right) d\tau\\
    &\quad -\left(\int^{\infty}_{\frac{t}{2}} \int_{\mathbb{R}^n} |u(\tau, y)|^p dy d\tau \right) \partial^j_t |D|^{2k\sigma_2} \mathcal{G}_1(t,x)\\
    &\coloneqq I_1 + I_2 + I_3 + I_4 + I_5.
\end{align*}
Let us deal with the integral $I_1$.  We have
\begin{align}
    \|I_1\|_{L^2} 
    &\lesssim \int_0^{\frac{t}{2}} \|\partial^j_t |D|^{2k\sigma_2} \left((K_1(t-\tau,x) - \mathcal{G}_1(t-\tau,x))\ast_x |u(\tau,x)|^p\right)\|_{L^2} d\tau \nonumber \\ 
    &\lesssim \int_0^{\frac{t}{2}} (t-\tau)^{\Gamma(2k\sigma_2)} \||u(\tau,\cdot)|^p\|_{L^1} d\tau 
    + \int_0^{\frac{t}{2}} e^{-(t-\tau)} \||u(\tau,\cdot)|^p\|_{L^2} d\tau  \label{eq:I1-1}\\
    &\lesssim t^{\Gamma(2k\sigma_2)} \int_0^{\frac{t}{2}} (1+\tau)^{-\frac{n}{2(\sigma-\sigma_1)}(p-1)+\frac{p\sigma_1}{\sigma-\sigma_1}} d\tau 
    + e^{-\frac{t}{2}}\int_0^{\frac{t}{2}}  (1+\tau)^{-\frac{n}{2(\sigma-\sigma_1)}(p-\frac{1}{2})+\frac{p\sigma_1}{\sigma-\sigma_1}} d\tau \label{eq:I1-2}\\
    &\lesssim t^{-\frac{n}{4(\sigma-\sigma_1)} - \frac{k\sigma_2}{\sigma-\sigma_1} - j + \frac{\sigma_1}{\sigma-\sigma_1}} + e^{-\frac{t}{2}}
    \lesssim t^{-\frac{n}{4(\sigma-\sigma_1)} - \frac{k\sigma_2}{\sigma-\sigma_1} - j + \frac{\sigma_1}{\sigma-\sigma_1}} \label{eq:I1-3}.
\end{align}
Here, we applied the estimate \eqref{eq:proposition-asy2} in \eqref{eq:I1-1}. We also used estimates \eqref{2.37} and \eqref{2.38} combining with the relation $t-\tau \approx t$ when $\tau \in [0, t/2]$ in \eqref{eq:I1-2}. Furthermore, the two above integrals in \eqref{eq:I1-2} are integrable under the condition \eqref{2.28-1}. Besides that, we also always have the following inequality when $n > 4\sigma_1$:
\begin{equation*}
    \Gamma(2k\sigma_2) < -\frac{n}{4(\sigma-\sigma_1)} - \frac{k\sigma_2}{\sigma-\sigma_1} - j + \frac{\sigma_1}{\sigma-\sigma_1}.
\end{equation*}
We control $I_2$ by repeating some arguments in the proof of Theorem \ref{Global_Existence-1} and we obtain
\begin{align}
    \|I_2\|_{L^2} 
    \lesssim t^{-\frac{n}{4(\sigma-\sigma_1)} - \frac{k\sigma_2}{\sigma-\sigma_1} - j + \frac{\sigma_1}{\sigma-\sigma_1} -\varepsilon} \label{eq:I2},
\end{align}
where $\varepsilon$ is a sufficiently small positive satisfying $\frac{n}{2(\sigma-\sigma_1)}(p-1) - \frac{p\sigma_1}{\sigma-\sigma_1} - 1 > \varepsilon$. To take account of $I_3$, we use the mean value theorem on $t$ to get the following representation:
\begin{equation*}
    \mathcal{G}_1(t-\tau,x) - \mathcal{G}_1(t,x) = -\tau \partial_t \mathcal{G}_1(t-\omega_1\tau, x),
\end{equation*}
with $\omega_1$ is a constant in $[0,1]$. From that, we have
\begin{equation}
    \|I_3\|_{L^2} \lesssim \int_0^{\frac{t}{2}} \tau \|\partial_t^{j+1}|D|^{2k\sigma_2}(\mathcal{G}_1(t-\omega_1\tau, x)\ast|u(\tau, x)|^p)\|_{L^2} d\tau.
\end{equation}
Applying the estimates \eqref{2.37}, \eqref{eq:proposition-asy3-1} and the relation $t-\omega_1\tau \approx t$ when $\tau \in [0, t/2]$, we have
\begin{align}
    \|I_3\|_{L^2} 
    &\lesssim \int_0^{\frac{t}{2}} \tau (t-\omega_1\tau)^{-\frac{n}{4(\sigma-\sigma_1)} - \frac{k\sigma_2}{\sigma-\sigma_1} - j + \frac{\sigma_1}{\sigma-\sigma_1}-1}\||u(\tau, \cdot)|^p\|_{L^1} d\tau \nonumber\\
    &\lesssim t^{-\frac{n}{4(\sigma-\sigma_1)} - \frac{k\sigma_2}{\sigma-\sigma_1} - j + \frac{\sigma_1}{\sigma-\sigma_1}-1} \int_0^{\frac{t}{2}} \tau (1+\tau)^{-\frac{n}{2(\sigma-\sigma_1)}(p-1)+\frac{p\sigma_1}{\sigma-\sigma_1}} d\tau \nonumber\\
    &\lesssim t^{-\frac{n}{4(\sigma-\sigma_1)} - \frac{k\sigma_2}{\sigma-\sigma_1} - j + \frac{\sigma_1}{\sigma-\sigma_1}-1} \int_0^{\frac{t}{2}} (1+\tau)^{-\frac{n}{2(\sigma-\sigma_1)}(p-1)+\frac{p\sigma_1}{\sigma-\sigma_1}+1} d\tau \nonumber\\
    &\lesssim t^{-\frac{n}{4(\sigma-\sigma_1)} - \frac{k\sigma_2}{\sigma-\sigma_1} - j + \frac{\sigma_1}{\sigma-\sigma_1}-1} 
    \left(1+(1+\tau)^{-\frac{n}{2(\sigma-\sigma_1)}(p-1)+\frac{p\sigma_1}{\sigma-\sigma_1}+2}+\log(1+t)\right) \nonumber\\
    &\lesssim t^{-\frac{n}{4(\sigma-\sigma_1)} - \frac{k\sigma_2}{\sigma-\sigma_1} - j + \frac{\sigma_1}{\sigma-\sigma_1}-1} 
    (1+t)^{-\varepsilon+1}
    \lesssim t^{-\frac{n}{4(\sigma-\sigma_1)} - \frac{k\sigma_2}{\sigma-\sigma_1} - j + \frac{\sigma_1}{\sigma-\sigma_1}-\varepsilon} \label{eq:I3}
\end{align}
as $t \rightarrow \infty$ and $\varepsilon$ is a sufficiently small positive satisfying
\begin{equation*}
    \frac{n}{2(\sigma-\sigma_1)}(p-1) - \frac{p\sigma_1}{\sigma-\sigma_1} - 1 > \varepsilon.
\end{equation*}

Let us estimate $I_4$. Firstly, we separate it into two terms as follows:
\begin{align*}
    I_4 &= \int_0^{\frac{t}{2}} \partial^j_t |D|^{2k\sigma_2} \left(\int_{\mathbb{R}^n} \mathcal{G}_1(t, x-y) |u(\tau, y)|^p dy - \int_{\mathbb{R}^n} \mathcal{G}_1(t,x)|u(\tau,x)|^p dy\right) d\tau\\
    &= \int_0^{\frac{t}{2}} \int_{|y| \leq t^{\frac{1}{4(\sigma-\sigma_1)}}} \partial^j_t |D|^{2k\sigma_2}(\mathcal{G}_1(t, x-y)-\mathcal{G}_1(t,x)) |u(\tau, y)|^p dy d\tau\\
    &\quad+ \int_0^{\frac{t}{2}} \int_{|y| \geq t^{\frac{1}{4(\sigma-\sigma_1)}}} \partial^j_t |D|^{2k\sigma_2}(\mathcal{G}_1(t, x-y)-\mathcal{G}_1(t,x)) |u(\tau, y)|^p dy d\tau\\
    &\coloneqq I_{41} + I_{42}.
\end{align*}
Then, we deal with $I_{41}$ by using the mean value theorem on $x$ to obtain
\begin{equation*}
    \mathcal{G}_1(t,x-y) - \mathcal{G}_1(t,x) = -y \partial_x \mathcal{G}_1(t, x-\omega_2 y),
\end{equation*}
with $\omega_2$ is a constant in $[0,1]$. Hence, we can use \eqref{2.37} and \eqref{eq:proposition-asy3-2} to have
\begin{align*}
    \|I_{41}\|_{L^2} 
    &\lesssim \|\int_0^{\frac{t}{2}} \int_{|y| \leq t^{\frac{1}{4(\sigma-\sigma_1)}}} (-y) \partial^j_t |D|^{2k\sigma_2+1} \mathcal{G}_1(t, x-\omega_2 y) |u(\tau, y)|^p dy d\tau \|_{L^2}\\
    &\lesssim \int_0^{\frac{t}{2}} \int_{|y| \leq t^{\frac{1}{4(\sigma-\sigma_1)}}} |y| \|\partial^j_t |D|^{2k\sigma_2+1} \mathcal{G}_1(t, x-\omega_2 y)\|_{L^2} |u(\tau, y)|^p dy d\tau\\
    &\lesssim t^{-\frac{n}{4(\sigma-\sigma_1)} - \frac{2k\sigma_2+1}{2(\sigma-\sigma_1)} - j + \frac{\sigma_1}{\sigma-\sigma_1}} \int_0^{\frac{t}{2}} \int_{|y| \leq t^{\frac{1}{4(\sigma-\sigma_1)}}} |y||u(\tau, y)|^p dy d\tau\\
    &\lesssim t^{-\frac{n}{4(\sigma-\sigma_1)} - \frac{2k\sigma_2+1}{2(\sigma-\sigma_1)} - j + \frac{\sigma_1}{\sigma-\sigma_1}} \int_0^{\frac{t}{2}} t^{\frac{1}{4(\sigma-\sigma_1)}} \||u(\tau, y)|^p\|_{L^1} d\tau\\
    &\lesssim t^{-\frac{n}{4(\sigma-\sigma_1)} - \frac{k\sigma_2}{\sigma-\sigma_1} - j + \frac{\sigma_1}{\sigma-\sigma_1} - \frac{1}{4(\sigma-\sigma_1)}} \int_0^{\frac{t}{2}} (1+\tau)^{-\frac{n}{2(\sigma-\sigma_1)}(p-1)+\frac{p\sigma_1}{\sigma-\sigma_1}} d\tau\\
    &\lesssim t^{-\frac{n}{4(\sigma-\sigma_1)} - \frac{k\sigma_2}{\sigma-\sigma_1} - j + \frac{\sigma_1}{\sigma-\sigma_1} - \frac{1}{4(\sigma-\sigma_1)}}.
\end{align*}
For $I_{42}$, by \eqref{2.37} we have the following estimate
\begin{equation*}
    \int_0^{\infty} \int_{\mathbb{R}^n} |u(\tau, y)|^p dy d\tau = \int_0^{\infty} \||u(\tau, y)|^p\|_{L^1} d\tau \lesssim \int_0^{\infty} (1+\tau)^{-\frac{n}{2(\sigma-\sigma_1)}(p-1)+\frac{p\sigma_1}{\sigma-\sigma_1}} d\tau \lesssim 1.
\end{equation*}
It leads to the relation
\begin{equation*}
    \lim_{t \rightarrow \infty} \int_0^{\infty} \int_{|y| \geq t^{\frac{1}{4(\sigma-\sigma_1)}}} |u(\tau, y)|^p dy d\tau = 0.
\end{equation*}
From that and \eqref{eq:proposition-asy3-2}, we can estimate $I_{42}$ as follows:
\begin{align*}
    \|I_{42}\|_{L^2} &\leq 2 \int_0^{\frac{t}{2}} \int_{|y| \geq t^{\frac{1}{4(\sigma-\sigma_1)}}} \|\partial^j_t |D|^{2k\sigma_2} \mathcal{G}_1(t,\cdot)\|_{L^2} |u(\tau, y)|^p dy d\tau\\
    &\lesssim t^{-\frac{n}{4(\sigma-\sigma_1)} - \frac{k\sigma_2}{\sigma-\sigma_1} - j + \frac{\sigma_1}{\sigma-\sigma_1}} \int_0^{\frac{t}{2}} \int_{|y| \geq t^{\frac{1}{4(\sigma-\sigma_1)}}} |u(\tau, y)|^p dy d\tau\\
    &= o\left(t^{-\frac{n}{4(\sigma-\sigma_1)} - \frac{k\sigma_2}{\sigma-\sigma_1} - j + \frac{\sigma_1}{\sigma-\sigma_1}}\right)
\end{align*}
as $t \rightarrow \infty$. Therefore, from two estimates of $I_{41}$ and $I_{42}$, we can obtain
\begin{equation}\label{eq:I4}
    \|I_{4}\|_{L^2} = o\left(t^{-\frac{n}{4(\sigma-\sigma_1)} - \frac{k\sigma_2}{\sigma-\sigma_1} - j + \frac{\sigma_1}{\sigma-\sigma_1}}\right)
\end{equation}
as $t \rightarrow \infty$. Finally, we can control $I_5$ by using \eqref{eq:proposition-asy3-2} as follows:
\begin{align}
    \|I_{5}\|_{L^2} 
    &\lesssim \|\partial^j_t |D|^{2k\sigma_2} \mathcal{G}_1(t,x)\|_{L^2}\int^{\infty}_{\frac{t}{2}} \int_{\mathbb{R}^n} |u(\tau, y)|^p dy d\tau \nonumber \nonumber\\
    &\lesssim t^{-\frac{n}{4(\sigma-\sigma_1)} - \frac{k\sigma_2}{\sigma-\sigma_1} - j + \frac{\sigma_1}{\sigma-\sigma_1}} \int^{\infty}_{\frac{t}{2}} \||u(\tau, y)|^p\|_{L^1} d\tau \nonumber\\
    &\lesssim t^{-\frac{n}{4(\sigma-\sigma_1)} - \frac{k\sigma_2}{\sigma-\sigma_1} - j + \frac{\sigma_1}{\sigma-\sigma_1}} \int^{\infty}_{\frac{t}{2}} (1+\tau)^{-\frac{n}{2(\sigma-\sigma_1)}(p-1)+\frac{p\sigma_1}{\sigma-\sigma_1}} d\tau \nonumber\\
    &\lesssim t^{-\frac{n}{4(\sigma-\sigma_1)} - \frac{k\sigma_2}{\sigma-\sigma_1} - j + \frac{\sigma_1}{\sigma-\sigma_1}} (1+t)^{-\frac{n}{2(\sigma-\sigma_1)}(p-1)+\frac{p\sigma_1}{\sigma-\sigma_1}+1} \nonumber\\
    &\lesssim t^{-\frac{n}{4(\sigma-\sigma_1)} - \frac{k\sigma_2}{\sigma-\sigma_1} - j + \frac{\sigma_1}{\sigma-\sigma_1} - \varepsilon} \label{eq:I5}
\end{align}
as $t \rightarrow \infty$ and $\varepsilon$ is a sufficiently small positive satisfying
\begin{equation*}
    \frac{n}{2(\sigma-\sigma_1)}(p-1) - \frac{p\sigma_1}{\sigma-\sigma_1} - 1 > \varepsilon.
\end{equation*}
Summarizing, we combine \eqref{eq:I1-3}, \eqref{eq:I2}, \eqref{eq:I3}, \eqref{eq:I4} and \eqref{eq:I5} to conclude the desired estimate \eqref{eq:need-proof}. Hence, the estimate \eqref{eq:Semi-linear_Asym} is proven.

Next, we have the following estimate:
\begin{equation*}
    \int_{0}^{\frac{t}{2}} \||u_t(\tau, y)|^p\|_{L^1} d\tau 
    \lesssim \int_{0}^{\frac{t}{2}}(1+\tau)^{-\frac{n}{2(\sigma-\sigma_1)}(p-1) - \frac{p(\sigma-2\sigma_1)}{\sigma-\sigma_1}} d\tau.
\end{equation*}
Under the conditions of $p$ and $n$ in the assumption of Theorem \ref{Global_Existence-3}, we have the condition \eqref{equation3.3.2} satisfied to ensure the above right integral is integrable. Therefore, we use exactly the above arguments in the proof of \eqref{eq:Semi-linear_Asym} and replace suitably $|u|^p$ by $|u_t|^p$ in order to prove and conclude the estimates \eqref{eq:Semi-linear_Asym2-1} and \eqref{eq:Semi-linear_Asym2-2}. In other words, Theorem \ref{Semi-linear_Asym} was proved completed.

\end{proof}

%=====================================================================
%\section{Concluding remarks and Open problems}\label{Concluding.Sec}
%\begin{remark}[\textbf{Coefficients in time}]
    
%\end{remark}

%\begin{remark}[\textbf{Lifespan estimates}]
    
%\end{remark}

%=====================================================================
\section*{Appendix}
\begin{lemma}[see \cite{IkehataTakeda2019}] \label{L^1.Lemma}
Let $a\ge 0$. Let us assume $h= h(x) \in L^1$ and $\phi=\phi(t,x)$ be a smooth function satisfying
$$ \big\||D|^a \phi(t,\cdot)\big\|_{L^2} \lesssim t^{-\alpha} \quad \text{ and }\quad  \big\||D|^{a+1} \phi(t,\cdot)\big\|_{L^2} \lesssim t^{-\alpha-\beta}, $$
for some positive constants $\alpha,\,\beta>0$. Then, the following estimate holds:
$$ \left\||D|^a \left(\phi(t,x) \ast h(x)- \left(\int_{\R^n}h(y)\,dy\right)\phi(t,x)\right)(t,\cdot) \right\|_{L^2}= o\big(t^{-\alpha}\big) \quad \text{ as }t \to \ity, $$
for all space dimensions $n\ge 1$.
\end{lemma}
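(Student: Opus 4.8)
The plan is to pass to the Fourier side. Since $\int_{\R^n} h(y)\,dy = \widehat{h}(0)$, the function $\phi(t,\cdot)\ast h - \big(\int_{\R^n} h\big)\phi(t,\cdot)$ has Fourier transform $\widehat{\phi}(t,\xi)\big(\widehat{h}(\xi)-\widehat{h}(0)\big)$, so by the Plancherel theorem
$$\left\||D|^a\Big(\phi(t,\cdot)\ast h - \widehat h(0)\,\phi(t,\cdot)\Big)\right\|_{L^2}^2 = \int_{\R^n} |\xi|^{2a}\,\big|\widehat\phi(t,\xi)\big|^2\,\big|\widehat h(\xi)-\widehat h(0)\big|^2\,d\xi.$$
Because $h\in L^1$, the transform $\widehat h$ is bounded and continuous, and in particular the quantity $\omega(r):=\sup_{|\xi|\le r}\big|\widehat h(\xi)-\widehat h(0)\big|$ is finite, non-decreasing, and satisfies $\omega(r)\to 0$ as $r\to 0^+$ (continuity of $\widehat h$ at the origin, which follows from dominated convergence).

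Next I would split the integral at a threshold $\delta=\delta(t)\in(0,1]$ to be chosen later. On the low-frequency region $|\xi|\le\delta$, I bound $\big|\widehat h(\xi)-\widehat h(0)\big|\le\omega(\delta)$ and use the first hypothesis:
$$\int_{|\xi|\le\delta} |\xi|^{2a}|\widehat\phi|^2\,\big|\widehat h(\xi)-\widehat h(0)\big|^2\,d\xi \;\le\; \omega(\delta)^2\,\big\||D|^a\phi(t,\cdot)\big\|_{L^2}^2 \;\lesssim\; \omega(\delta)^2\,t^{-2\alpha}.$$
On the high-frequency region $|\xi|>\delta$, I bound $\big|\widehat h(\xi)-\widehat h(0)\big|^2\lesssim\|h\|_{L^1}^2$ and insert the elementary inequality $|\xi|^{2a}\le\delta^{-2}|\xi|^{2a+2}$, so that the second hypothesis gives
$$\int_{|\xi|>\delta} |\xi|^{2a}|\widehat\phi|^2\,\big|\widehat h(\xi)-\widehat h(0)\big|^2\,d\xi \;\lesssim\; \delta^{-2}\,\big\||D|^{a+1}\phi(t,\cdot)\big\|_{L^2}^2 \;\lesssim\; \delta^{-2}\,t^{-2(\alpha+\beta)}.$$
Combining the two pieces yields $\big\||D|^a(\phi\ast h-\widehat h(0)\phi)\big\|_{L^2}\lesssim \omega(\delta)\,t^{-\alpha}+\delta^{-1}\,t^{-\alpha-\beta}$.

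Finally I would choose $\delta(t)=t^{-\beta/2}$ for $t\ge 1$. The second term becomes $t^{-\alpha-\beta/2}=o(t^{-\alpha})$, while $\omega(\delta(t))=\omega(t^{-\beta/2})\to 0$ as $t\to\infty$, so the first term is $o(t^{-\alpha})$ as well; this gives the claim. The only delicate point is the quantitative control of $\widehat h(\xi)-\widehat h(0)$ near $\xi=0$: it does not suffice that this difference vanishes pointwise at the origin, one needs the monotone modulus $\omega$ so that $\omega(\delta(t))\to 0$ along the chosen slowly shrinking scale $\delta(t)$; the remaining work is just the balancing of the two powers of $t$. Note that no decay of $\phi$ itself in $L^2$ is used — only the decay of $|D|^a\phi$ and $|D|^{a+1}\phi$ supplied by the hypotheses.
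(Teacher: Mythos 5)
Your argument is correct and complete. Note first that the paper itself supplies no proof of this lemma: it is stated in the appendix with a citation to \cite{IkehataTakeda2019} and used as a black box in the proof of Theorem \ref{Linear_Asym}, so there is no internal proof to compare against. Your Plancherel reduction, the splitting of the frequency integral at a $t$-dependent threshold $\delta(t)$, the bound $|\xi|^{2a}\le \delta^{-2}|\xi|^{2a+2}$ on $\{|\xi|>\delta\}$ to invoke the hypothesis on $|D|^{a+1}\phi$, and the use of the modulus of continuity $\omega(\delta)=\sup_{|\xi|\le\delta}|\widehat h(\xi)-\widehat h(0)|$ on $\{|\xi|\le\delta\}$ all check out; the choice $\delta(t)=t^{-\beta/2}$ makes both contributions $o(t^{-\alpha})$, and your closing remark correctly identifies the only delicate point, namely that pointwise vanishing of $\widehat h(\xi)-\widehat h(0)$ at the origin is not enough and one needs $\omega(\delta(t))\to 0$ along the shrinking scale, which follows from continuity of $\widehat h$ at $0$ for $h\in L^1$ (dominated convergence). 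The original argument of Ikehata--Takeda achieves the same low-frequency smallness slightly differently, by writing $\widehat h(\xi)-\widehat h(0)=\int h(y)(e^{-iy\cdot\xi}-1)\,dy$ and splitting $h$ into the pieces on $\{|y|\le L\}$ and $\{|y|>L\}$, bounding the former by $L|\xi|\,\|h\|_{L^1}$ and the latter by the tail mass of $|h|$; your modulus-of-continuity formulation packages the same information more cleanly and is, if anything, marginally more economical. The only cosmetic caveat is the normalization constant in the convolution theorem (whether $\widehat{f\ast g}=\widehat f\,\widehat g$ or carries a factor of $(2\pi)^{n/2}$), which cancels against the corresponding constant in $\int h = c\,\widehat h(0)$ and does not affect the estimate.
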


\begin{proposition}[Fractional Gagliardo-Nirenberg inequality, \cite{ReissigEbert2018}] \label{FractionalG-N}
Let $1<p,\, p_0,\, p_1<\infty$, $\sigma >0$ and $s\in [0,\sigma)$. Then, it holds for all $u\in L^{p_0} \cap \dot{H}^\sigma_{p_1}$
$$ \|u\|_{\dot{H}^{s}_p}\lesssim \|u\|_{L^{p_0}}^{1-\theta}\,\, \|u\|_{\dot{H}^{\sigma}_{p_1}}^\theta, $$
where $\theta=\theta_{s,\sigma}(p,p_0,p_1)=\frac{\frac{1}{p_0}-\frac{1}{p}+\frac{s}{n}}{\frac{1}{p_0}-\frac{1}{p_1}+\frac{\sigma}{n}}$ and $\frac{s}{\sigma}\leq \theta\leq 1$.
\end{proposition}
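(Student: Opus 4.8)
The plan is to prove this classical fractional Gagliardo--Nirenberg inequality by the Littlewood--Paley method, with the value of $\theta$ first isolated by a scaling argument. Before doing any analysis I would record the dimensional heuristic: under the rescaling $u\mapsto u(\lambda\,\cdot)$ the three quantities behave like $\|u\|_{\dot H^s_p}\sim\lambda^{s-n/p}$, $\|u\|_{L^{p_0}}\sim\lambda^{-n/p_0}$ and $\|u\|_{\dot H^\sigma_{p_1}}\sim\lambda^{\sigma-n/p_1}$. Demanding that the claimed estimate be invariant under this one-parameter group forces $s-n/p=(1-\theta)(-n/p_0)+\theta(\sigma-n/p_1)$, and dividing through by $n$ reproduces exactly the stated formula for $\theta$. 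Thus $\theta$ is not a free parameter but is pinned by dimensional analysis, and the scaling also tells us at which frequency the two competing norms must be balanced.

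For the substantive estimate I would use a homogeneous Littlewood--Paley decomposition $u=\sum_{j\in\mathbb{Z}}\Delta_j u$, where $\Delta_j$ localizes the frequency to $|\xi|\sim 2^j$, together with the Besov embedding $\dot B^s_{p,1}\hookrightarrow\dot H^s_p$ (valid for $1<p<\infty$), so that it suffices to control $\sum_j 2^{js}\|\Delta_j u\|_{L^p}$. Each dyadic block is then estimated in two complementary ways by Bernstein's inequalities. Passing from $L^{p_0}$ to $L^p$ gives $\|\Delta_j u\|_{L^p}\lesssim 2^{jn(1/p_0-1/p)}\|u\|_{L^{p_0}}$, which is favorable at low frequencies; writing $\Delta_j u=2^{-j\sigma}|D|^{\sigma}\Delta_j u$ up to a bounded Fourier multiplier and then moving from $L^{p_1}$ to $L^p$ gives $\|\Delta_j u\|_{L^p}\lesssim 2^{j(n(1/p_1-1/p)-\sigma)}\|u\|_{\dot H^\sigma_{p_1}}$, which is favorable at high frequencies.

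It then remains to insert these block bounds into $\sum_j 2^{js}\|\Delta_j u\|_{L^p}$, split the sum at a threshold $2^{j_*}$, and optimize in $j_*$. Using the first bound for $j\le j_*$ and the second for $j>j_*$ produces two geometric series whose convergence requires $s+n(1/p_0-1/p)>0$ for the low-frequency part and $s+n(1/p_1-1/p)-\sigma<0$ for the high-frequency part; the first is precisely the positivity of the numerator of $\theta$ and the second is precisely $\theta\le 1$ in the strict regime. Choosing $2^{j_*}$ of order $\big(\|u\|_{L^{p_0}}/\|u\|_{\dot H^\sigma_{p_1}}\big)^{1/(\sigma+n/p_0-n/p_1)}$ to balance the two tails collapses the geometric sums to $\|u\|_{L^{p_0}}^{1-\theta}\|u\|_{\dot H^\sigma_{p_1}}^{\theta}$, with the exponent $\theta$ exactly the one forced in the first step. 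The lower constraint $\theta\ge s/\sigma$ enters as the admissibility of the smoothness budget: the interpolated space can carry at most $\sigma\theta$ derivatives, so $s\le\sigma\theta$ is required for the left-hand smoothness index to be attainable.

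The main obstacle is that the two Bernstein passages implicitly assume the right monotonicity of the integrability exponents (morally $p\ge p_0$ and $p\ge p_1$), and the whole scheme must be made to work uniformly over the full admissible range $1<p,p_0,p_1<\infty$, $s\in[0,\sigma)$, $s/\sigma\le\theta\le1$. When these monotonicity relations fail, the crude block bounds must be replaced by interpolation performed on a single dyadic annulus, where all $L^q$ norms are two-sided comparable up to an explicit Bernstein factor, so that log-convexity of $L^q$ norms yields the needed interpolated block estimate. The genuinely delicate points are the two endpoints: at $\theta=1$ the high-frequency series fails to be geometric and the statement degenerates to the pure Sobolev embedding $\dot H^\sigma_{p_1}\hookrightarrow\dot H^s_p$, which I would instead handle directly by Hardy--Littlewood--Sobolev under the matching relation $\sigma-n/p_1=s-n/p$; and at $\theta=s/\sigma$ one must verify that the smoothness budget is exactly saturated and the inequality still holds rather than failing. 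Everything away from these endpoints is a routine summation of two convergent geometric series.
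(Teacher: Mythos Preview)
The paper does not prove this proposition at all: it is stated in the Appendix with a citation to \cite{ReissigEbert2018} and no argument is given. There is therefore no ``paper's own proof'' to compare your attempt against; you have supplied a genuine proof sketch where the authors simply quote a reference.

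Your Littlewood--Paley approach (scaling to pin down $\theta$, Besov embedding $\dot B^s_{p,1}\hookrightarrow\dot H^s_p$, Bernstein on dyadic blocks, splitting at a threshold $j_*$ and optimizing) is standard and essentially correct in the interior regime $0<\theta<1$ when $p\ge\max(p_0,p_1)$. Two points deserve tightening. First, your remedy for the case where Bernstein goes the wrong way (``all $L^q$ norms are two-sided comparable on a dyadic annulus'') is not accurate as stated: Bernstein is one-directional. The clean fix is to observe that the constraint $\theta\ge s/\sigma$, combined with the scaling relation, is equivalent to $1/p\ge(1-\theta)/p_0+\theta/p_1$; one can then interpolate each block via H\"older, $\|\Delta_j u\|_{L^p}\le\|\Delta_j u\|_{L^{p_0}}^{1-\theta}\|\Delta_j u\|_{L^{p_1}}^{\theta}$ after first passing by Bernstein to an intermediate exponent, or equivalently invoke the Besov interpolation $\|u\|_{\dot B^s_{p,1}}\lesssim\|u\|_{\dot B^0_{p_0,\infty}}^{1-\theta}\|u\|_{\dot B^\sigma_{p_1,\infty}}^{\theta}$ (valid since $0\neq\sigma$) together with $L^{p_0}\hookrightarrow\dot B^0_{p_0,\infty}$ and $\dot H^\sigma_{p_1}\hookrightarrow\dot B^\sigma_{p_1,\infty}$. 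Second, your treatment of the endpoint $\theta=1$ via Hardy--Littlewood--Sobolev is correct, but at $\theta=s/\sigma$ the inequality reduces to H\"older plus the identification $\|\,|D|^s u\|_{L^p}$ with a product-type bound, not merely ``saturation of the smoothness budget''; this case also needs an explicit argument.
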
 

\begin{proposition}[Fractional Leibniz rule, \cite{ReissigEbert2018}] \label{FractionalLeibniz}
Let us assume $s>0$, $1\leq r \leq \infty$ and $1<p_1,\, p_2,\, q_1,\, q_2 \le \infty$ satisfying the relation
\[ \frac{1}{r}=\frac{1}{p_1}+\frac{1}{p_2}=\frac{1}{q_1}+\frac{1}{q_2}.\]
Then, it holds for any $u\in \dot{H}^s_{p_1} \cap L^{q_1}$ and $v\in \dot{H}^s_{q_2} \cap L^{p_2}$
$$\big\||D|^s(u \,v)\big\|_{L^r}\lesssim \big\||D|^s u\big\|_{L^{p_1}}\, \|v\|_{L^{p_2}}+\|u\|_{L^{q_1}}\, \big\||D|^s v\big\|_{L^{q_2}}. $$    
\end{proposition}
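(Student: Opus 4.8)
The plan is to prove this fractional Leibniz rule, a Kato--Ponce type inequality, by means of the Littlewood--Paley--Bony paraproduct calculus. First I would fix a dyadic partition of unity, writing $\Delta_j$ for the Littlewood--Paley projector onto the frequency annulus $|\xi|\sim 2^j$ and $S_j=\sum_{k<j}\Delta_k$ for the associated low-frequency truncation. The workhorse is the square-function characterization of Riesz-potential spaces, namely $\||D|^s f\|_{L^p}\sim \big\|\big(\sum_j 2^{2js}|\Delta_j f|^2\big)^{1/2}\big\|_{L^p}$ valid for $1<p<\infty$, together with the boundedness of the Hardy--Littlewood maximal function on the same range; the genuinely endpoint cases in which one of the exponents equals $\infty$ are handled by replacing the corresponding square function by a direct $L^\infty$ bound. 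With this in hand I would invoke Bony's decomposition
\[ uv = T_u v + T_v u + R(u,v), \]
where $T_u v=\sum_j S_{j-2}u\,\Delta_j v$ and $T_v u=\sum_j S_{j-2}v\,\Delta_j u$ are the two paraproducts and $R(u,v)=\sum_{|j-k|\le 1}\Delta_j u\,\Delta_k v$ is the resonant part, and estimate each of the three pieces separately.

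The two paraproducts are the straightforward pieces. In $T_u v$ each summand $S_{j-2}u\,\Delta_j v$ has Fourier support in the annulus $|\xi|\sim 2^j$, so $|D|^s$ acts there essentially as multiplication by $2^{js}$ and does not see the slowly varying factor $S_{j-2}u$. Bounding $|S_{j-2}u|$ pointwise by a constant multiple of the maximal function of $u$, pulling the factor $\|u\|_{L^{q_1}}$ out through H\"older's inequality with the splitting $\tfrac1r=\tfrac1{q_1}+\tfrac1{q_2}$, and recognizing the remaining square function as $\||D|^s v\|_{L^{q_2}}$, I would obtain $\||D|^s T_u v\|_{L^r}\lesssim \|u\|_{L^{q_1}}\||D|^s v\|_{L^{q_2}}$. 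The symmetric argument applied to $T_v u$, now using the splitting $\tfrac1r=\tfrac1{p_1}+\tfrac1{p_2}$, yields $\||D|^s T_v u\|_{L^r}\lesssim \||D|^s u\|_{L^{p_1}}\|v\|_{L^{p_2}}$. Together these reproduce exactly the two terms on the right-hand side of the claimed inequality.

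The resonant term $R(u,v)$ is the main obstacle, and it is the only place where the hypothesis $s>0$ is truly needed. Here both factors $\Delta_j u$ and $\Delta_k v$ sit at comparable frequency $2^j\sim 2^k$, but their product may have arbitrarily low output frequency, so $|D|^s$ can no longer be read off from a single annulus. The device is to project the sum onto output blocks: writing $R(u,v)=\sum_j \Delta_j u\,\widetilde\Delta_j v$ with $\widetilde\Delta_j:=\Delta_{j-1}+\Delta_j+\Delta_{j+1}$, the block $\Delta_q\big(\Delta_j u\,\widetilde\Delta_j v\big)$ vanishes unless $2^q\lesssim 2^j$, i.e. $j\gtrsim q$. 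Applying $|D|^s$ therefore costs only $2^{qs}\le 2^{js}$, and after a H\"older estimate $\|\Delta_j u\,\widetilde\Delta_j v\|_{L^r}\le\|\Delta_j u\|_{L^{p_1}}\|\widetilde\Delta_j v\|_{L^{p_2}}$ one is left to sum the geometric series $\sum_{q\le j}2^{(q-j)s}$, which converges precisely because $s>0$. This reduction, combined once more with the square-function bounds and H\"older, controls $\|\,|D|^s R(u,v)\|_{L^r}$ by $\||D|^s u\|_{L^{p_1}}\|v\|_{L^{p_2}}$ (one could equally route it to the other term), so it is absorbed into the right-hand side.

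Finally I would collect the three contributions to conclude the stated bound; the only loose ends are the verification of the maximal-function and square-function steps at the endpoint exponents $p_i,q_i=\infty$, where one simply substitutes an $L^\infty$ estimate for the unavailable square function. I expect the resonant estimate, and in particular the bookkeeping that makes the summation in the output frequency converge, to be the delicate step; everything else is routine harmonic analysis once the paraproduct framework is set up.
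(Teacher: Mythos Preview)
The paper does not actually prove this proposition: it is stated in the Appendix as a known result, with a citation to \cite{ReissigEbert2018}, and no argument is given. So there is nothing to compare your proof against on the paper's side.

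Your approach via the Bony paraproduct decomposition is the standard route to Kato--Ponce inequalities and is essentially correct for the range $1<r<\infty$ with $1<p_i,q_i<\infty$. The treatment of the two paraproducts and the resonant piece is accurate, and you correctly identify $s>0$ as the condition that makes the geometric sum over output frequencies converge. One point to flag: the proposition as stated allows $r=1$ and $r=\infty$, and in those endpoint cases the square-function characterization $\||D|^s f\|_{L^r}\sim\|(\sum_j 2^{2js}|\Delta_j f|^2)^{1/2}\|_{L^r}$ is not available, so your argument as written does not cover them; the $r=1$ endpoint in particular (due to Grafakos--Oh) requires a genuinely different argument. You gesture at the $p_i,q_i=\infty$ endpoints but not at the $r$ endpoints, so if you intend to prove the proposition in the full generality stated you should say explicitly how those are handled or restrict the range.
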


\begin{proposition}[Fractional powers, \cite{ReissigEbert2018}] \label{FractionalPowers}
Let $p>1$, $1< r <\infty$ and $u \in H^{s}_r$, where $s \in \big(\frac{n}{r},p\big)$. Let us denote by $F(u)$ one of the functions $|u|^p,\, \pm |u|^{p-1}u$. Then, the following estimates hold:
$$\|F(u)\|_{H^{s}_r}\lesssim \|u\|_{H^{s}_r}\,\, \|u\|_{L^\infty}^{p-1} \quad \text{ and }\quad \| F(u)\|_{\dot{H}^{s}_r}\lesssim \|u\|_{\dot{H}^{s}_r}\,\, \|u\|_{L^\infty}^{p-1}. $$
\end{proposition}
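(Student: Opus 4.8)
The plan is to establish both inequalities at once, reducing first to the homogeneous seminorm. Since $F(0)=0$ and $|F(v)|\le\|u\|_{L^\infty}^{p-1}|v|$ whenever $|v|\le\|u\|_{L^\infty}$, one immediately gets $\|F(u)\|_{L^r}\lesssim\|u\|_{L^\infty}^{p-1}\|u\|_{L^r}$, so it suffices to prove $\|F(u)\|_{\dot{H}^{s}_r}\lesssim\|u\|_{L^\infty}^{p-1}\|u\|_{\dot{H}^{s}_r}$; the Bessel-potential bound then follows by adding the two and using $\|\cdot\|_{H^{s}_r}\sim\|\cdot\|_{L^r}+\|\cdot\|_{\dot{H}^{s}_r}$. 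Throughout I would use that the hypothesis $p>s$ makes $F$ of class $C^{\lfloor s\rfloor}$ with $|F^{(\ell)}(v)|\lesssim|v|^{p-\ell}$ for $0\le\ell\le\lfloor s\rfloor$ and with $F^{(\lfloor s\rfloor)}$ carrying just enough H\"older regularity (of order exceeding $s-\lfloor s\rfloor$) to absorb one further fractional derivative, and that $s>n/r$ forces $u\in L^\infty$ by Sobolev embedding, so that every norm below is finite.

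First I would treat the range $0<s<1$. On the ball $\{|v|\le\|u\|_{L^\infty}\}$ the map $F$ is Lipschitz with constant $\lesssim\|u\|_{L^\infty}^{p-1}$ by the mean value theorem, so $|F(u(x))-F(u(y))|\le p\,\|u\|_{L^\infty}^{p-1}\,|u(x)-u(y)|$ for all $x,y$. Feeding this pointwise bound into a difference (or Littlewood--Paley/paraproduct) description of $\dot{H}^{s}_r$ — a standard composition estimate for Lipschitz nonlinearities vanishing at the origin, valid for $0<s<1$ — gives $\|F(u)\|_{\dot{H}^{s}_r}\lesssim\|u\|_{L^\infty}^{p-1}\|u\|_{\dot{H}^{s}_r}$.

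For $s\ge1$ I would write $s=k+\theta$ with $k=\lfloor s\rfloor\ge1$ and $\theta\in[0,1)$, and apply the Fa\`a di Bruno formula: for a multi-index $\alpha$ with $|\alpha|=k$, $\partial^\alpha\big(F(u)\big)$ is a finite linear combination of terms $F^{(\ell)}(u)\,\partial^{\beta_1}u\cdots\partial^{\beta_\ell}u$ with $1\le\ell\le k$ and $\beta_1+\cdots+\beta_\ell=\alpha$. Since $\ell\le k\le s<p$, one has $\|F^{(\ell)}(u)\|_{L^\infty}\lesssim\|u\|_{L^\infty}^{p-\ell}$. I would then bound the $L^r$ norm of $|D|^\theta$ applied to such a term via the fractional Leibniz rule (Proposition \ref{FractionalLeibniz}) — or plain H\"older when $\theta=0$ — distributing the $\theta$-derivative and the $L^r$ exponent among the factors, then controlling each resulting factor $\|\,|D|^{\theta}\partial^{\beta_i}u\,\|_{L^{q_i}}$, $\|\partial^{\beta_i}u\|_{L^{q_i}}$ (and, if needed, $\|\,|D|^\theta F^{(\ell)}(u)\,\|_{L^{q_0}}$) by the fractional Gagliardo--Nirenberg inequality (Proposition \ref{FractionalG-N}) interpolating between $L^\infty$ and $\dot{H}^{s}_r$. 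A homogeneity count — every such term is $p$-homogeneous in $u$ and carries $s$ derivatives in all — then pins down the interpolation exponents so that exactly one factor of $\|u\|_{\dot{H}^{s}_r}$ and $p-1$ factors of $\|u\|_{L^\infty}$ remain, which is the claimed bound.

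The step I expect to be the main obstacle is precisely this last bookkeeping. One must verify that the H\"older exponents $q_0,\dots,q_\ell$ with $\sum_i 1/q_i=1/r$ and the Gagliardo--Nirenberg interpolation weights can all be chosen admissibly — every $q_i\in(1,\infty]$, every weight in $[0,1]$, all index sums matching — and that $F^{(\lfloor s\rfloor)}$ is regular enough to swallow the leftover $|D|^\theta$. It is exactly the two hypotheses $s>n/r$ and $s<p$ that make this work: the former supplies $L^\infty$ as a genuine endpoint for the lower-order factors, while the latter both ensures that $F$ is differentiable enough for $\ell$ to reach $k$ with the right fractional smoothness and keeps the surviving power $p-1$ of $\|u\|_{L^\infty}$ positive. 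Since the statement is quoted verbatim from \cite{ReissigEbert2018}, one may instead simply cite it; the sketch above indicates the route to a self-contained proof.
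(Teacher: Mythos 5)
This proposition is one of the auxiliary results collected in the paper's Appendix and is quoted verbatim from \cite{ReissigEbert2018}; the paper supplies no proof of it, so there is no in-paper argument to compare yours against. Your sketch follows the standard route for such Moser-type composition estimates (reduction to the homogeneous seminorm, Lipschitz superposition for $0<s<1$, Fa\`a di Bruno plus fractional Leibniz and Gagliardo--Nirenberg interpolation between $L^\infty$ and $\dot{H}^s_r$ for $s\ge 1$), which is essentially the argument behind the cited reference, and the role you assign to the two hypotheses $s>n/r$ and $s<p$ is the correct one. Two caveats, neither fatal: first, the Gagliardo--Nirenberg inequality as stated in Proposition \ref{FractionalG-N} requires $1<p_0<\infty$, so the interpolation ``between $L^\infty$ and $\dot{H}^s_r$'' you invoke is not literally covered by that proposition and needs the $L^\infty$-endpoint version (true, but a separate statement); second, your argument remains a plan rather than a proof --- the exponent bookkeeping you flag as the main obstacle (checking $\sum_i 1/q_i=1/r$ with all $q_i$ admissible and all interpolation weights in $[0,1]$, and placing the leftover $|D|^{\theta}$ on $F^{(\lfloor s\rfloor)}(u)$ using its H\"older regularity of order $p-\lfloor s\rfloor>s-\lfloor s\rfloor$) does close, but you have not carried it out. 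Since the statement is a cited textbook result, citing it, as the paper does, is the appropriate course.
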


\begin{proposition}[A fractional Sobolev embedding, \cite{DaoReissig1}] \label{Embedding}
Let $0< s_1< \frac{n}{2}< s_2$. Then, for any function $u \in \dot{H}^{s_1} \cap \dot{H}^{s_2}$ we have
$$ \|u\|_{L^\ity} \lesssim \|u\|_{\dot{H}^{s_1}}+ \|u\|_{\dot{H}^{s_2}}. $$
\end{proposition}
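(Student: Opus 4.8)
The plan is to pass to the Fourier side and exploit the elementary bound $\|u\|_{L^\infty}\le \|\widehat{u}\,\|_{L^1}$, which follows directly from the Fourier inversion formula $u(x)=c_n\int_{\R^n}\widehat{u}(\xi)e^{ix\cdot\xi}\,d\xi$. Thus everything reduces to estimating $\|\widehat{u}\,\|_{L^1}$ by the two homogeneous Sobolev norms. The key idea is to split the frequency space at the unit ball and to use the two different regularity exponents in the two regions: the low-frequency part is controlled by $\dot{H}^{s_1}$ and the high-frequency part by $\dot{H}^{s_2}$.

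First I would write, for the low frequencies,
\[
\int_{|\xi|\le 1}|\widehat{u}(\xi)|\,d\xi
=\int_{|\xi|\le 1}|\xi|^{-s_1}\,\big(|\xi|^{s_1}|\widehat{u}(\xi)|\big)\,d\xi,
\]
and apply the Cauchy--Schwarz inequality to obtain
\[
\int_{|\xi|\le 1}|\widehat{u}(\xi)|\,d\xi
\le\Big(\int_{|\xi|\le 1}|\xi|^{-2s_1}\,d\xi\Big)^{1/2}
\Big(\int_{|\xi|\le 1}|\xi|^{2s_1}|\widehat{u}(\xi)|^2\,d\xi\Big)^{1/2}
\lesssim \|u\|_{\dot{H}^{s_1}}.
\]
Here the first factor is finite precisely because $2s_1<n$, i.e. $s_1<n/2$, upon passing to polar coordinates the radial integral $\int_0^1 r^{n-1-2s_1}\,dr$ converges. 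Analogously, for the high frequencies I would write $|\widehat{u}(\xi)|=|\xi|^{-s_2}\big(|\xi|^{s_2}|\widehat{u}(\xi)|\big)$ and apply Cauchy--Schwarz to get
\[
\int_{|\xi|>1}|\widehat{u}(\xi)|\,d\xi
\le\Big(\int_{|\xi|>1}|\xi|^{-2s_2}\,d\xi\Big)^{1/2}
\Big(\int_{|\xi|>1}|\xi|^{2s_2}|\widehat{u}(\xi)|^2\,d\xi\Big)^{1/2}
\lesssim \|u\|_{\dot{H}^{s_2}},
\]
where now the first factor is finite because $2s_2>n$, i.e. $s_2>n/2$, giving the convergent radial integral $\int_1^\infty r^{n-1-2s_2}\,dr$. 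Summing the two contributions yields $\|\widehat{u}\,\|_{L^1}\lesssim \|u\|_{\dot{H}^{s_1}}+\|u\|_{\dot{H}^{s_2}}$, and hence the claimed bound on $\|u\|_{L^\infty}$.

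There is no genuine analytic obstacle in this argument; the only point that requires care is the convergence of the two weight integrals $\int_{|\xi|\le1}|\xi|^{-2s_1}\,d\xi$ and $\int_{|\xi|>1}|\xi|^{-2s_2}\,d\xi$, and this is exactly where the hypothesis $s_1<n/2<s_2$ is used in an essential and sharp way. In other words, the role of the two distinct exponents straddling the critical value $n/2$ is precisely to make the low- and high-frequency pieces of $\widehat{u}$ integrable after dividing by the respective powers of $|\xi|$. The splitting radius $1$ is immaterial and could be replaced by any fixed $R>0$; I would keep it at $R=1$ for simplicity.
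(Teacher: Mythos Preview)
Your argument is correct and is the standard proof of this embedding: the frequency splitting at $|\xi|=1$ together with Cauchy--Schwarz and the convergence of $\int_{|\xi|\le 1}|\xi|^{-2s_1}\,d\xi$ and $\int_{|\xi|>1}|\xi|^{-2s_2}\,d\xi$ under the hypothesis $s_1<n/2<s_2$ is exactly what is needed. Note, however, that the paper does not actually give a proof of this proposition; it is stated in the Appendix as a known result with a citation to \cite{DaoReissig1}, so there is no in-paper argument to compare your approach against.
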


%=====================================================================
\section*{Acknowledgments}
This research was partly supported by Vietnam Ministry of Education and Training under grant number B2023-BKA-06.

%=================================================================================={References}

\end{document}